\newcounter{eqcounter}[section]
\renewcommand{\theeqcounter}{\arabic{section}.\arabic{eqcounter}}
\renewenvironment{equation}{\medskip\noindent\refstepcounter{eqcounter}\makebox[0pt][l]{({\bf\theeqcounter})}\begin{minipage}[b]{\textwidth}$$}{$$\end{minipage}\medskip\noindent}
\newcommand{\ch}{\mathrm{ch}}
\newtheorem{thm}{Theorem}[section]
\newtheorem{theorem}[thm]{Theorem}
\newtheorem{prop}[thm]{Proposition}
\newtheorem{corollary}[thm]{Corollary}
\newtheorem{lemma}[thm]{Lemma}
\newtheorem{proposition}[thm]{Proposition}
\theoremstyle{definition}
\newtheorem{definition}[thm]{Definition}
\newtheorem{remark}[thm]{Remark}
\newtheorem{definition-proposition}[thm]{Definition-Proposition}
\DeclareMathOperator{\End}{End}
\newcommand{\R} {\mathbb{R}}
\newcommand{\Z} {\mathbb{Z}}
\newcommand{\bZ} {\mathbf{Z}}
\newcommand{\C} {\mathbb{C}}
\newcommand{\br}{\mathbf r}
\newcommand{\zbar}{\overline{z}}
\newcommand{\cA}{\mathcal{A}}
\newcommand{\cD}{\mathcal{D}}
\newcommand{\DD}{\mathcal{D}}
\newcommand{\cI}{\mathcal {I}}
\newcommand{\cJ}{{\mathcal J}}
\newcommand{\cM}{{\mathcal M}}
\newcommand{\cO}{\mathcal{O}}
\newcommand{\cP}{\mathcal{P}}
\newcommand{\cS}{{\mathcal S}}
\newcommand{\cU}{\mathcal{U}}
\newcommand{\cL}{\mathcal{L}}
\newcommand{{\cHol}}{{\mathcal{{H}}}}
\newcommand{\cV}{\mathcal V}
\newcommand{{\cLf}}{\mathcal{L}_{\mathrm{f}}}
\newcommand{{\cLfo}}{\wt{\cLf}} 
\newcommand{\al}{\alpha}
\newcommand{\be}{\beta}
\newcommand{\ga}{\gamma}
\newcommand{\de}{\delta}
\newcommand{\ep}{\epsilon}
\newcommand{\gk}{\frak g}
\newcommand{\hk}{\frak h}
\newcommand{\fg}{\mathfrak g}
\newcommand{\fc}{\mathfrak c}
\newcommand{\fn}{\mathfrak n}
\newcommand{\fh}{\mathfrak h}
\newcommand{\fk}{\frak k}
\newcommand{\fsu}{\mathfrak{su}}
\newcommand{\fb}{\mathfrak b}
\newcommand{\fso}{\mathfrak{so}}
\newcommand{\fsp}{\mathfrak{sp}}
\newcommand{\fm}{\mathfrak{m}}
\newcommand{\fgl}{\mathfrak{gl}}
\newcommand{\fsl}{\mathfrak{sl}}
\newcommand{\fp}{\mathfrak p}
\newcommand{\fr}{\mathfrak r}
\newcommand{\fu}{\mathfrak u}
\newcommand{\fq}{\mathfrak q}
\newcommand{\smflds}{ \mathrm{(smflds) }}
\newcommand{\rGL}{\mathrm{GL}}
\newcommand{\rOsp}{{\hbox{Osp}}}
\newcommand{\rosp}{{\mathrm{osp}}}
\newcommand{\rAut}{{\mathrm{Aut}}}
\newcommand{\rsl}{{\mathrm{sl}}}
\newcommand{\rM}{\mathrm{M}}
\newcommand{\rSp}{\mathrm{Sp}}
\newcommand{\Ad}{\mathrm{Ad}}
\newcommand{\ad}{\mathrm{ad}}
\newcommand{\Lie}{\mathrm{Lie}}
\newcommand{\rk}{{\mathrm{rk}}}
\newcommand{\ddiag}{\mathrm{diag}}
\newcommand{\uHom}{\mathrm{\underline{Hom}}}
\newcommand{\albar}{\overline{\alpha}}
\newcommand{\bebar}{\overline{\beta}}
\newcommand{\cinfty}{C^\infty} 
\newcommand{\red}[1]{\widetilde{ #1 }}
\newcommand{\restr}[2]{{{#1}}_{{\left. \right|}_{#2}}}
\renewcommand{\to}{\mathchoice{\longrightarrow}{\rightarrow}{\rightarrow}{\rightarrow}} 
\let\mto\mapsto \renewcommand{\mapsto}{\mathchoice{\longmapsto}{\mto}{\mto}{\mto}} 
\newcommand{\wt}{\widetilde}
\newcommand{\di}{{\rm d}}
\newcommand{\id}{{\mathrm{id}}}
\newcommand{\Pol}{\textrm{Pol}}
\newcommand{\lra} {\longrightarrow}
\newcommand{\cov}[1]{{#1}^{\sim}}
\newcommand{\beq}{\begin{equation}}
\newcommand{\eeq}{\end{equation}}
\begin{document}

\Large
\centerline{\bf Highest weight Harish-Chandra supermodules} 

\medskip

\centerline{\bf and their geometric realizations}


\normalsize


\bigskip

\centerline{C. Carmeli$^\natural$, R. Fioresi$^\flat$
\footnote{The research was supported by 
MSCA-RISE EU project GHAIA grant number 777822}, V. S. Varadarajan $^\star$ }

\bigskip
\centerline{\it $^\natural$ DIME, 
Universit\`a di Genova }
 \centerline{\it Via Armando Magliotto, 2, 17100 Savona, Italy}
\centerline{{\footnotesize e-mail: carmeli@dime.unige.it}}

\medskip
\centerline{\it  $^\flat$ Dipartimento di Matematica, Universit\`{a} di
Bologna }
 \centerline{\it Piazza di Porta S. Donato, 5. 40126 Bologna, Italy.}
\centerline{{\footnotesize e-mail: rita.fioresi@unibo.it}}

\medskip
\centerline{\it $^\star$
University of California, Los Angeles}
\centerline{\it Los Angeles, CA 90095-1555, USA}
\centerline{{\footnotesize e-mail: vsv@math.ucla.edu}}

\begin{abstract}\footnote{MSC 2000 Subject Classifications. 
Primary: 17B15, 20G05, 22E45; Secondary 58A50, 14A22, 32C11.}
In this paper we discuss the highest weight 
$\fk_r$-finite representations
of the pair $(\fg_r,\fk_r)$ consisting of $\fg_r$, a real
form of a complex basic Lie superalgebra of classical type
$\fg$ ($\fg\neq A(n,n)$), 
and the maximal compact subalgebra $\fk_r$ of $\fg_{r,0}$,
together with their geometric global realizations.
These representations occur, as in the ordinary setting, in the
superspaces of sections of holomorphic super vector bundles on
the associated Hermitian superspaces $G_r/K_r$. 
\end{abstract}

\section{Introduction}

During 1955-56 Harish-Chandra published three papers in the American Journal 
of Mathematics devoted to understanding the
theory of representations of a real semisimple Lie
group, which are also (infinitesimally) highest weight modules \cite{hc}. 
These modules were constructed both infinitesimally and globally, 
the global modules realized as spaces of sections of certain holomorphic 
vector bundles on the associated symmetric space, which is hermitian symmetric. 
He constructed the matrix element defined by the highest weight vector, 
and verified its square integrability under suitable conditions. 
Under these conditions, the representations were in the discrete series and 
he obtained formulae for their formal degree and character, which showed 
a strong resemblance to the Weyl formulae in the finite dimensional case. 
These calculations convinced him of the structure of the discrete series 
in the general case, although he was still years away from 
resolving this puzzle completely.

\medskip

The purpose of this paper
is to generalize some aspects of this theory to the
supersymmetric situation.

\medskip
{In the literature, several authors have discussed unitarizable
infinitesimal Harish-Chandra modules, 
see for example \cite{jakobsen, dz, fn} and Refs. within.
In \cite{jakobsen} Jakobsen classifies the unitarizable highest
weight modules for Lie superalgebras, while Furutsu and Nishiyama 
in \cite{fn} focus
on the case $\fsu(p,q|n)$. 
A global realization of such modules for the conformal supergroup
appears in \cite{dp} and in full generality 
later on in \cite{Alldridge}, where the author establishes an
equivalence of categories between certain Harish-Chandra modules
and the category of smooth Frech\'et representations of the supergroup
whose module of $K$-finite vectors is Harish-Chandra. 
Our Theorem \ref{th2-intro} can be read as an explicit realization of
such equivalence. These representations appear classically in the
space of holomorphic sections on hermitian spaces. An infinitesimal
realization of such spaces in the supersetting is due first to Serganova
in \cite{ser} and later on to Borthwick et al. in \cite{blr}, though
they do not construct the super Harish-Chandra
modules associated with them.
}

In the present work,
we start by examining the infinitesimal setting, that is the 
Harish-Chandra highest weight supermodules over the complex field for
$\fg$, a complex basic Lie 
superalgebra, $\fg\neq A(n,n)$, $\fg_1\neq 0$.
Let $\fg_r$ be a real form of $\fg$ and let $\fg=\fk\oplus \fp$, be the complex
super Cartan decomposition ($\fk=\fk_0$ the complexification
of the maximal compact subalgebra $\fk_r$ of $\fg_r$). {Assume
$\fh_r \subset \fk_r$, that is $\rk(\fk)=\rk(\fg)$, where
$\fh$, the complexification of $\fh_r$,}
is a Cartan subalgebra (CSA) of both $\fk$ and $\fg$.

The main results for the infinitesimal theory 
are Theorems \ref{theorem4} and \ref{thm4},
that we summarize here.

\medskip
\begin{theorem} \label{th1-intro}
Let $\lambda \in \fh^*$,
$\lambda(H_\al) \in \Z_{\geq 0}$ for all compact roots $\al$.
Let $U^\lambda=\cU(\fg) \otimes_{\cU(\fq)}F$ ($F$ the finite dimensional
representation of $\fk$ associated with $\lambda$, $\fq=\fk\oplus \fp^+$).
Then
\begin{enumerate}
\item $U^\lambda$ is the universal Harish-Chandra supermodule 
of highest weight $\lambda$. 

\item $U^\lambda$ has a unique irreducible quotient, which is the
unique (up to isomorphism) irreducible
highest weight Harish-Chandra supermodule with highest
weight $\lambda$.

\item 
If 
$(\lambda+\rho)(H_\gamma)\leq 0$ for all 
$\gamma\in P_{n}$ and $<0$ for $\gamma$ isotropic, 
then $U^\lambda$ is 
irreducible.
\end{enumerate}
\end{theorem}

The construction of $U^\lambda$ is based on the existence of 
{\sl admissible systems} for $\fg$. These are positive systems
such that the adjoint representation of $\fk$ on $\fp$ stabilizes
$\fp^\pm$, the sum of the positive (negative) non compact
root spaces. In \cite{chf}
is proven the existence of such systems using diagrammatic methods;
here we exhibit them and take a more conceptual
approach (Theorem \ref{existence-admissible-thm}). 
The existence of admissible systems
is instrumental to provide an invariant complex structure
on $\fg_r/\fk_r$, which is the infinitesimal counterpart of the
complex structure on the homogeneous
superspace $G_r/K_r$ ($\fg_r=\Lie(G_r)$, $\fk_r=\Lie(\fk_r)$), 
that we will use to build the global Harish-Chandra
representations.
In the end of this part (Sec. \ref{char-sec}), we obtain a character
formula for the universal Harish-Chandra supermodule.

\medskip
In the second part of the paper 
we  proceed to study the geometric realization of the Harish-Chandra 
representation on the superspace
of holomorphic sections of a vector bundle defined on the 
symmetric superspace $G_r/K_r$. We construct the infinite 
dimensional Harish-Chandra super representations
of a real form $G_r$ of a simple complex Lie supergroup $G$, whose
infinitesimal version are the supermodules constructed in the
first part of the paper. We first consider 
the Fr\'echet
superspace of global sections of a (complex) line bundle \(L^\chi\) 
on the quotient $G/B^+$,
associated with the infinitesimal character $\lambda$ 
(see Theorem \ref{th1-intro}), 
where $B^+$ is a Borel subsupergroup of $G$ associated
with an admissible system $P$.  
In order to prove that \(L^\chi(G_rB^+)\neq \{0\}\),  
we use the existence of a global section of the projection 
\(N^- B^+ \to N^- B^+ /B^+\) which in turn ensures the existence of an 
isomorphism \(\cO(N^-)\to L^\chi (N^- B^+): f\mapsto \cov{f}\).
 Exactly as in the Harish-Chandra's 
theory, under generic conditions, we are able to go, from line 
bundles on \(G/B^+\), to vector bundles over \(G_r/K_r\).
We also obtain the Harish-Chandra decomposition
$P^-KP^+ \subset G$, $\fp^\pm=\Lie(P^\pm)$; however, while in the
ordinary setting $P^\pm$ are abelian subgroups, in the supersetting
this is no longer true. This is due to the fact that
the superalgebras $\fp^\pm$ are not in general abelian. 
Nevertheless, we are able to give a thorough description
of such representations. The main results for this part
are in Theorems \ref{theorem6}, \ref{theorem8} and Corollary \ref{cor-thm6}
that we summarize here:

\medskip
\begin{theorem}\label{th2-intro}
Let $\chi$ be a  character of $B^+ \subset G$ corresponding to
the infinitesimal character $\lambda \in \fh^*$. 
Let
$L^\chi(G_rB^+):=\{f:G_rB^+ \lra \C^{1|1} \, | \, f_T(gb)=\chi_T(b)^{-1}f_T(g)\}$
and assume its reduced part $\widetilde{L^\chi(G_rB^+)}:=\{\tilde{f} \,|\,
f \in  L^\chi(G_rB^+)\}
 \neq 0$. Let $\ell$ denote
the natural left action of $\cU(\fg)$ on the sections in $L^\chi(G_rB^+)$.
Then:
\begin{enumerate}
\item $L^\chi(G_rB^+)$ contains an element $\psi$ which is an
analytic continuation of $\cov{1}$ (the polynomial section corresponding to $1$); 
\item $F^{11}:= \overline{\ell(\cU(\fg))\psi}$ $\subset  L^\chi(G_rB^+)$
is a Fr\'echet $G_r$-supermodule, $K_r$-finite and with $K_r$-finite
part  $\ell(\cU(\fg))\psi) = \cov{\cP}_\lambda$ (the polynomial sections).
\item 
the $K_r$-finite
part  ${\cov{\cP}_{\lambda}}$ is
isomorphic to $\pi_{-\lambda}$ the irreducible representation
with lowest weight $-\lambda$. In particular 
$\lambda(H_\al) 
\in \Z_{\geq 0}$ for all compact positive roots $\alpha$.
\item  $\ell(\cU(\fg))\psi$
is the irreducible 
Harish-Chandra supermodule with highest weight $-\lambda$
with respect to the positive system $-P$.
\end{enumerate}
Vice-versa, if the center of $\fk$ has positive dimension, and 
$\lambda \in \fh^*$ is integral, $K$\textit{-integrable} and
$\lambda(H_\al) \geq 0$ for $\al$ compact positive root, 
then $\widetilde{L^\chi(G_rB^+)} \neq 0$.
\end{theorem}

\medskip
{\bf Acknowledgements.} We wish to thank Professor V. Serganova
for many helpful comments on the infinitesimal theory. We also
thank the anonymous Referee for his invaluable work, which has
greatly helped us to improve this paper.

\section{The infinitesimal theory}

In this section we start with the discussion of the infinitesimal
theory, namely the representation of the pair $(\fg_r,\fk_r)$,
where $\fg_r$ is a real basic Lie superalgebra and $\fk_r$
the maximal compact subalgebra of its even part.

\subsection{Summary of results for basic Lie superalgebras}

Assume 
$\fg$ to be basic classical, $\fg\neq A(n,n)$, 
$\fg_1 \neq 0$, i.e. (see \cite{ka2} Prop. 1.1): 
\begin{align}
\label{list:SLA}
A(m,n)\textrm{ with } m\neq n \,,\, B(m,n)\,,\,
 C(n)\,,\, D(m,n) \,,\, 
D(2,1;\alpha)\,,\,  F(4) \,,\,  G(3)
\end{align}

Let $\fh=\fh_0$ be a Cartan subalgebra of $\fg_0$ and let
$\fg=\fh \oplus \sum_{\al \in \Delta} \fg_\al$ be the
root space decomposition of $\fg$.

$\Delta_0$ and $\Delta_1$ denote the
set of \textit{even} and \textit{odd} roots respectively, where we say that
a root $\al$ is even if $\fg_\al \cap \fg_0 \neq 0$ and similarly for
the odd case.

\medskip

The following proposition  shows that  many properties of 
the Cartan-Killing theory extend to the basic classical Lie superalgebras.

\begin{proposition}[\cite{ka2} Prop. 1.3, \cite{ka} Prop. 2.5.5]
A basic classical Lie superalgebra in list (\ref{list:SLA})
satisfies the following
properties:
\begin{enumerate}
\item
${\dim}\fg_\al=1$, for all $\al\in \Delta$.
\item $[\fg_\al, \fg_\be] = 0$ if and only if $\al$, $\be \in \Delta$, 
$\al+ \be \not\in \Delta$. \\
$[\fg_\al, \fg_\be] = \fg_{\al+\be}$ if $\al$, $\be$, $\al+
\be \in \Delta$.
\item If $\al \in \Delta$, then $-\al \in \Delta$.
\item $(\fg_\al, \fg_\be) =0$ for $\al \neq -\be$. The form $(\,,\,)$
determines a non degenerate pairing of $\fg_\al$ with $\fg_{-\al}$
and $(\,,\,)|_{\fh \times \fh}$ is non degenerate.
\item $[e_\al,e_{-\al}]=(e_\al,e_{-\al})h_\al$, where $h_\al$ is defined by
$(h_\al,h)=\al(h)$ and $e_{\pm \al} \in \fg_{\pm \al}$. 
\item The bilinear form of $\fh^*$ defined by 
$(\lambda, \mu)=(h_\lambda,h_\mu)$ is non degenerate and $W$-invariant,
where $W$ is the Weyl group of $\fg_0$. 
\item $k \al \in \Delta$ for $\al \neq 0$ and $k \neq \pm 1$ if and
only if $\al \in \Delta_1$ and $(\al,\al) \neq 0$. In this case $k = \pm 2$.
\end{enumerate}
\end{proposition}

The last point characterizes a root for which $(\al,\al)=0$: it is
an odd root $\al$ such that $2\al$ is not a root. Such roots 
are called 
\textit{isotropic roots}
Notice that, for example, in $\fsl(m|n)$ all
odd roots are isotropic. 

\medskip

\begin{definition}
Let the notation and the hypotheses be as above.
We define a
\textit{Borel subsuperalgebra}, as a subsuperalgebra $\fb \subset 
\fg$, such that:
\begin{itemize}
\item $\fb_0$ is a Borel subalgebra
of $\fg_0$;
\item $\fb=\fh \oplus \fn^+$, where $\fn^+$ is a nilpotent ideal of $\fb$,
\end{itemize}
and $\fb$ is maximal with respect to these properties (see \cite{musson}
pg 26 for more details).
\end{definition}

Let us fix a Borel subsuperalgebra $\fb$. 
We say that a root $\al$ is \textit{positive} if $\fg_\al \cap \fn^+ \neq (0)$,
we say that it is \textit{negative}
if $\fg_\al \cap \fn^+ = (0)$.
$\fn^+$ is
the span of the positive root spaces
 and we define $\fn^-$ as the span of the
negative ones.
We define the \textit{positive system} $P$ as 
the set of positive roots.
We say that a positive root is \textit{simple} if it is indecomposable,
that is, if we cannot write it as a sum of two positive roots.
Let $\Pi=\{\al_1, \dots , \al_r\}$ be the set of simple roots, we also
call $\Pi$ a \textit{fundamental system}.

\medskip

We have the following result (see \cite{ka2} Prop. 1.5).

\begin{proposition}
Let $\fg$ be a basic classical Lie superalgebra as above. 
Fix a Borel subsuperalgebra and let the notation be as above.
Then
\begin{enumerate}
\item The simple roots $\al_1, \dots, \al_r$ are linearly independent.
\item We may choose elements $e_i \in \fg_{\al_i}$
and $f_i \in  \fg_{-\al_i} $ and $h_i \in \fh$ such that
$\{e_i,f_i,h_i\}_{i\in I}$ 
is the system of generators of $\fg$ satisfying the
relations:
$$
[e_i,f_j]=\de_{ij}h_i, \quad [h_i,h_j]=0, \quad
[h_i,e_j]=a_{ij}e_j, \quad [h_i,f_j]=-a_{ij}f_j.
$$
for a suitable non singular integral matrix $A=(a_{ij})_{i,j \in I}$
(the Cartan matrix of $\fg$).
\item $\fn^\pm$ are generated by the $e_i$'s and $f_i$'s respectively.
\item $\Delta=P \coprod -P$ and $P$ consists of the roots which
are integral positive linear combinations of simple roots. 
\end{enumerate}

\end{proposition}

\medskip
As in the case with the ordinary Lie algebras,
in the theory of finite dimensional representations of $\frak g$, 
a fundamental role is played by the highest weight modules. 
These are defined with respect to the choice of a  CSA $\frak h$ of 
$\frak g$ and a positive system $P$ of roots of $(\frak g, \frak \fh)$. 
They are parametrized by their highest weights.
The universal highest weight modules are known 
as {\it super Verma modules\/}
 and are infinite dimensional.
The irreducible highest weight modules are uniquely determined 
by their highest weights and are the unique irreducible quotients of 
the super Verma modules. The irreducible modules are finite dimensional 
if and only if the highest weight is dominant 
integral and it satisfies a condition which
can be expressed in terms of all of the Borel subsuperalgebras
containing $\fh$ (in the ordinary theory they
are all conjugate, while in the super theory they are not).
Furthermore, 
one obtains all irreducible finite dimensional representations of $\frak g$ 
in this manner (see Refs. \cite{ka, ka2, musson}).

\medskip
Our goal is to describe the infinite dimensional highest weight supermodules,
which are also $\fk$-finite, in the case $\rk(\fk)=\rk(\fg_0)$, so that
the CSA $\fh$ of $\fk$ is also a CSA of $\fg_0^{ss}$, the semisimple part of 
$\fg_0$ (see Sec. \ref{admissible-sec}
for the definition of the compact subalgebra $\fk_r$ and its complexification
$\fk$). It is important
to remark that in the ordinary setting, not every choice of positive
system leads to infinite dimensional highest weight $\fk$-finite 
$\fg$ representations.
The positive systems with this property are called \textit{admissible} and they
are characterized by the presence of \textit{totally positive roots}, 
that is positive non compact roots, which stay positive under the 
adjoint action of $\fk$,
(see Ref. \cite{hc}).

\subsection{Admissible Systems} \label{admissible-sec}

Let $\fg$ be a complex basic classical Lie superalgebra, 
$\fg \neq A(n,n)$ and $\fg_r$ a real form of $\fg$. 
We have the ordinary Cartan decomposition:

\beq
\fg_{r,0}^{ss}=\fk_{r,0}^{ss} \oplus \fp_{r,0}, 
\qquad \fg_0^{ss}=\fk_0^{ss} \oplus \fp_0, \qquad 
\fp_{r,0}=(\fk^{ss}_{r,0})^\perp \label{cdec-0}
\eeq

\noindent where we drop the index $r$ to mean the complexification.

\medskip
The ordinary real Lie algebra $\fg_{r,0}$
may not be semisimple; this happens for the type I Lie
superalgebras, in which case $\fg_{r,0}$ has 
a one-dimensional center. We define $\fk_r$
as $\fk_{r,0}^{ss}$, when $\fg_{r,0}^{ss}=\fg_{r,0}$, that is when $\fg_{r,0}$ is
semisimple, and we define $\fk_r=\fk_{r,0}^{ss}\oplus \fc(\fg_{r,0})$ if
$\fg_{r,0}$ has center $\fc(\fg_{r,0})$. 
We assume $\fk_r$ to be of compact type and reductive.
As usual we drop the index $r$ to mean complexification.

\medskip
Hence, we decompose $\fg$ as:

\beq
\fg =\fk_0 \oplus \fp, \qquad \fp:=\fp_0 \oplus \fg_1.\label{cdec-s}
\eeq

\begin{definition}
We call the pair $(\fg_r, \fk_r)$, described above, a $(\fg_r,\fk_r)$
\textit{pair of Lie superalgebras}.  Note that $\fk_r=\fk_{r,0}$.
We give the same definition for the complex 
pair of Lie superalgebras $(\fg,\fk)$.
\end{definition}

Let us now further assume that  
$$
\rk(\fg_{r,0})=\rk(\fk_r)
$$
Then we can choose a CSA $\fh_r=\fh_{r,0}$ of $\fg_{r,0}$ so that
\begin{align}
\label{eq::standass}
\frak h_r\subset \frak k_r\subset \frak g_r, \qquad
\frak h\subset \frak k\subset \frak g
\end{align}
$\frak h$ is then a CSA of $\frak k$, $\frak g_0$ and 
$\fg$.

\medskip
We fix a positive system $P=P_0 \cup P_1$ of roots for $(\frak g, \frak h)$ and 
write $\alpha >0$ interchangeably with $\alpha \in P$; $P_0$ denotes the
even roots, while $P_1$ the odd roots in $P$. 

\medskip

We say that a root $\alpha$ is \textit{compact} 
(resp. \textit{non-compact} ) 
if $\fg_\alpha\subseteq \fk$ (resp. $\fg_\alpha\subseteq \fp$). 
By \eqref{eq::standass}, we have
\[
\Delta=\Delta_k \cup \Delta_n
\]
where $\Delta_k$ (resp.$\Delta_n$) 
denotes the set of compact (resp. non-compact) roots.

 We call $P_k$ the set of positive compact roots
and $P_{n}$ the set of positive non-compact roots, $P= P_k \coprod P_{n}$.
We define:
$$
\fp^+=\sum_{\al \in P_{n}} \fg_\al \subset \fp, \quad 
\fp^-=\sum_{\al \in -P_{n}} \fg_\al \subset \fp
$$
that is $\fp^+$ (resp. $\fp^-$) is the direct sum of the positive (resp. negative) non-compact root spaces.

\begin{definition}\label{adm-def}
We say that the positive system $P=P_0 \cup P_1$ is \textit{admissible} if: 

\begin{enumerate}
\item $\fp^+$ is $\fk$-stable, that is, $[\fk,\fp^+]\subset \fp^+$; 
\item $\fp^+$ is a Lie subsuperalgebra, that is $[\fp^+,\fp^+] \subset \fp^+$.
\end{enumerate}
\end{definition}
The two conditions are equivalent to saying that 
$\fk \oplus \fp^+$
is a subsuperalgebra of $\fg$ and $\fp^+$ an ideal in $\fk+\fp^+$.
Notice that these conditions imply that
$P_0$ is an admissible system for $\fg_0^{ss}$ the semisimple part of $\fg_0$
and consequently that $\fp^+_0$ is abelian. If $P$ is a positive system
and $P_0$ is admissible, to verify $P$ is admissible, we only need to
check that:
$$
[\fp_1^+,\fp_1^+] \subset \fp_0^+, \qquad [\fk,\fp_1^+]\subset \fp_1^+
$$

\medskip

\medskip

We now want to prove the existence of admissible systems.

\begin{theorem} \label{existence-admissible-thm}
Let $P_0$ be an admissible system for $\fg_0^{ss}$,
the semisimple part of $\fg_0$.
Then $\fg$ has an admissible system $P$ containing $P_0$.
\end{theorem}

\begin{proof}
It is a known fact that if $\fg$ is a simple Lie algebra and $P=P_k\cup P_{n}$ is an admissible system, the only other admissible system containing $P_k$ is $P_k\cup(-P_n)$. Due to the irreducibility of the $\fk$-modules $\fp_0^\pm$ and the fact each weight is multiplicity free, any other admissible system has the form:
\[
P_k^\prime\cup (\pm P_n)
\] 
for some compact positive system $P^\prime_k$. In other words, the noncompact part of an admissible system is  fixed modulo a sign.

Hence, if $\fg$ is a semisimple Lie algebra the admissible systems containing a given $P_k$ are conjugate under the
action of $\Z_2 \times \dots \times \Z_2$, where we have one
copy of $\Z_2$ for each of the simple components of $\fg$. 
Hence, different admissible system are of the form:
\[
\bigcup_{i} (P_{k,i}^\prime \cup  \epsilon_i P_{n,i})
\]
where $P_{k,i}^\prime \cup   P_{n,i}$ is a fixed admissible system 
for the $i$-th factor and $\epsilon_i=\pm1$.

We can apply these considerations to the semisimple part  $\fg_0^{ss}$ 
of the Lie superalgebra $\fg$, and notice that, 
if we show that there exists a super admissible
system containing a particular admissible system $P_0$, then we are done. 
Indeed if we have
\[
P=P_0\cup P_1=\bigcup_{i} (P_0)_{k,i}\cup (P_0)_{n,i}\cup (P_1)_{n,i}
\]
then any other  admissible system for $\fg_0^{ss}$ is of the form
\[
\bigcup_{i} (P^\prime_0)_{k,i}\cup \epsilon_i (P_0)_{n,i}
\]
and hence an easy check shows that
\[
\bigcup_{i} (P_0^\prime )_{k,i}\cup \epsilon_i (P_0)_{n,i} \cup \epsilon_i (P_1)_{n,i}
\]
is an admissible system for $\fg$.

The fact that, for a particular
admissible system $P_0$ for $\fg_0^{ss}$, 
there exists an admissible system of $\fg$ containing  $P_0$, 
will be proven in the remaining part of this section 
by a case by case analysis.

\medskip

The Lie superalgebras of classical type and their real forms
have been classified in \cite{ka} (see also \cite{ser, parker}), 
so we proceed to a case by
case analysis. We briefly recall the ordinary setting.

The only simple basic classical
real Lie algebras $\fg_{r,0}$ giving rise to an hermitian
symmetric space, condition equivalent to have an admissible system, are:

\begin{itemize}
\item AIII. $\fg_{r,0}=\fsu(p,q)$, $\fg_0=\fsl_{p+q}(\C)$,
$\fk_{r,0}=\fsu(p) \oplus \fsu(q) \oplus i\R$, $\fk=\fsl_p(\C) \oplus
\fsl_q(\C) \oplus \C$.

\item BDI ($q=2$). $\fg_{r,0}=\fso_\R(p,2)$, $\fg_0=\fso_\C({p+2})$,
$\fk_{r,0}=\fso_\R(p) \oplus \fso_\R(2)$, $\fk=\fso_\C(p) \oplus \fso_\C(2)$.

\item  DIII. $\fg_{r,0}=\fso^*(2n)$, $\fg_0=\fso_\C({2n})$,
$\fk_{r,0}=\fu(n)$, $\fk=\fgl_n(\C)$.

\item  CI.  
$\fg_{r,0}={\fsp}_n(\R)$, $\fg_0={\fsp}_{n}(\C)$,
$\fk_{r,0}=\fu(n)$, $\fk=\fgl_n(\C)$.

\end{itemize}

We remark that there are two other Lie algebras corresponding to
hermitian symmetric spaces, namely EIII and EVII, 
however neither of them
will appear in the even part of a basic classical Lie superalgebra
$\fg$, $\fg_1\neq 0$. 

\medskip

Let us now proceed and examine the various cases in the super
setting.

\medskip\noindent
{\bf Type $A$}. {\bf Case:} $A(m-1,n-1)=\fsl_{m|n}(\C)$, $m \neq n$. 
The even part is
\[
\fg_0=\fsl_{m}(\C)\oplus \fsl_{n}(\C)\oplus \C
\]

We have 4 possible real forms of $A(m-1,n-1)_0$ that  
correspond to hermitian symmetric spaces. 
They are given by the following table (relative to the
semisimple part of $\fg$):

\medskip
\begin{center}
\begin{tabular}{|c|c|c|}
\hline
type & $\fsl_{m}(\C), m=p+q$  & $\fsl_{n}(\C), n=r+s$ \\
\hline
non-compact  & $\fsu(p,q)$ & $\fsu(r,s)$\\
\hline
compact & $\fsu(p+q)$ & $\fsu(r+s)$ \\
\hline
\end{tabular}
\end{center}

\medskip\noindent
Note that $p$, $q$, $r$, $s$ can take also the value $0$.
\medskip

The only real Lie algebra which is the even part of 
a real form of $\fg$ is: 
$$
\fg_{r,0}=\fsu(p,q)\oplus \fsu(r,s) \oplus \fu(1)
$$
(see \cite{parker}).
We now describe the root system of $\fg=\fsl_{m|n}(\C)$.
Take as CSA $\fh$ the diagonal matrices:
$$
\fh=\{ \, d\,=\, \ddiag(a_1, \dots , a_m,b_1, \dots , b_n)\}
$$
and define $\ep_i(d)=a_i$, $\de_j(d)=b_j$, for $i=1, \dots, m$
and $j=1, \dots, n$. Choose the simple system:
$$
\Pi=\{ \ep_1 - \ep_2, \dots , \ep_{m-1}-\ep_m, \ep_{m}-\de_1,
\de_1 - \de_2, \dots , \de_{n-1}-\de_n\}
$$
We have 3 non-compact simple roots, 2 even and 1 odd:
$$
\ep_{p} - \ep_{p+1}, \quad \de_{r}-\de_{r+1}, \quad   \ep_{m}-\de_1
$$
According to the simple system, we have: 
\begin{align*}
P_k=& \{\ep_i-\ep_j\mid 1\leq i<j\leq p\}\cup \{\ep_i-\ep_j\mid p+1\leq i<j\leq m\}\\
&\cup\{\de_i-\de_j\mid 1\leq i<j\leq r\}\cup \{\de_i-\de_j\mid r+1\leq i<j\leq n\}\\ 
P_{n,0}=& \{\ep_i-\ep_j\mid 1\leq i\leq p<j\leq m\}\cup \{\de_i-\de_j\mid 1\leq i\leq r<j\leq n\}\\ 
P_{n,1}=& \{\ep_i-\de_j\mid 1\leq i\leq m\,,\, 1\leq j \leq n\}
\end{align*}

The following checks are immediate: 
\begin{enumerate}
\item $[\fk_0,\fp_0^+] \subset \fp_0^+$, 
$[\fk_0,\fp_1^+] \subset \fp_1^+$. 
\item $[\fp_0^+, \fp_0^+]=0$,  $[\fp_0^+, \fp_1^+] \subset \fp_1^+$, 
 $[\fp_1^+, \fp_1^+]=0$.
\end{enumerate}

\noindent
Hence we have produced an admissible system.

\bigskip
\noindent
{\bf Type $B$}. 
{\bf Case:} $B(m,n)=\rosp_\C(2m+1|2n)$, $m \neq 0$.
The even part is 
\[
B(m,n)_0=\rosp_\C(2m+1|2n)_0=\fso_\C(2m+1) \oplus {\fsp}_n(\C)
\]

Reasoning as in the previous cases, that is reasoning on the
ordinary setting and knowing the classification of the real forms there, 
we have only four possibilities for the choice of
the real form of $B(m|n)_0$:
 
\medskip
 
 \begin{center}
\begin{tabular}{|c|c|c|}
\hline
type & $\fso_\C(2m+1)$  & $ {\fsp}_n(\C)$ \\
\hline
non-compact  & $\fso_\R(2,2m-1)$ & ${\fsp}_n(\R)$\\
\hline
compact & $\fso_{\R}(2m+1)$ & ${\fsp}(n)$ \\
\hline
\end{tabular}
 \end{center}
 
\medskip

The only case for which the real form $\fg_{r,0}$ extends to
a real form of the Lie superalgebra $\fg$ is 
\[
\fg_{r,0}=\fso_\R(2,2m-1) \oplus {\fsp}_n(\R)
\] 
(see \cite{parker}). The corresponding maximal compact subalgebra is 
\[
\fk_0=\fso_\R(2) \oplus \fso_\R(2m-1)\oplus \fu(n)
\]

Using the same notations as in \cite{Knapp},  we choose the simple system:
$$
\Pi=\{ \ep_1 - \ep_2\,, \dots ,\, \ep_{m-1}-\ep_m\,,\, \ep_m\,,\, 
\de_1 - \de_2\,, \dots ,\, \de_{n-1}-\de_n\,,\, \de_{n}-\ep_1\}
$$
We have one simple non-compact even root $\ep_1 - \ep_2$ 
and one non-compact simple odd root: $\de_{n}-\ep_1$.
So we have: 
\begin{align*}
P_{n,0} = & \{\ep_1 \pm \ep_j\mid 1<j\leq m  \}\cup\{\ep_1\} 
\cup \{\de_i+\de_j\mid 1\leq i,j \leq n \} \\ 
P_{n,1} = & \{\de_i \pm \ep_j\mid 1\leq i\leq n \,,\, 
1\leq j\leq m\}\cup \{\delta_i\mid 1\leq i \leq n\}\\ 
P_{k}= &
\{\ep_i \pm \ep_j\mid 1<i<j\leq m \}\cup 
\{\ep_i\mid 1< i \leq m \} \cup\{ \de_i - \de_j\mid1\leq i<j\leq n\}
\end{align*}

In order to check the $\ad(\fk_0)$ invariance of $\fp_0^+$
and $\fp_1^+$, we need to
verify that summing one of the roots in {$P_k \cup -P_k$} 
to one in {$P_{n,0}$ or $P_{n,1}$} 
we remain in  {$P_{n,0}$ or $P_{n,1}$}. 
The check is straightforward. Similarly one verifies that
$\fp^+=\fp_0^++ \fp_1^+$ is a Lie subsuperalgebra, hence we
have produced an admissible system.

\bigskip
\noindent
Type $B$. {\bf Case:} $B(0,n)=\rosp_\C(1|2n)$.
In this case, the even part is 
\[
B(0,n)_0=\rosp_\C(1|2n)_0
\cong {\fsp}_n(\C)
\]
The only real form of interest to us is:
\[
\fg_{r,0}={\fsp}_n(\R)
\] 
corresponding to the maximal compact subalgebra $\fk_0= \fu(n)$.

Choose the simple system:
$$
\Pi=\{ \de_1 - \de_2, \dots , \de_{n-1}-\de_n, \de_n\}
$$
We have one simple non-compact odd root $\de_n$. 
So we have: 
\begin{align*}
P_{n,0} = &\{ \de_i+\de_j\mid 1\leq i , j\leq n \}\\ 
P_{n,1}= &\{ \delta_i\mid 1\leq i \leq n \}\\ 
P_{k} =&\{
 \de_i - \de_j \quad 1\leq i<j\leq n \}
\end{align*}

Both the checks for properties (1) and (2) in Def. \ref{adm-def}
are immediate.

\bigskip

\noindent
{\bf Type $D$}. 
{\bf Case:} $D(m,n)={\rosp_\C(2m|2n)}$. 
The even part is given by: 
\[
D(m,n)_0={\rosp_\C(2m|2n)_0}=\fso_\C(2m) \oplus {\fsp}_n(\C)
\]
The  possibilities for the choice of
the real form of $D(m,n)_0$ are:
 
 \medskip
 
  \begin{center}
\begin{tabular}{|c|c|c|}
\hline
type & $\fso_\C(2m+1)$  & ${\fsp}_n(\C)$\\
\hline
non-compact  & $\fso_\R(2,2m-2)$ & ${\fsp}_n(\R)$\\
\hline
compact & $\fso_{\R}(2m)$ & ${\fsp}(n)$ \\
\hline
\hline
non-compact  & $\fso^\ast(2m)$ & ${\fsp_{n}}(\R)$\\
\hline
compact & $\fso_{\R}(2m)$ & ${\fsp}_n(\R)$ \\
\hline
\end{tabular}
 \end{center}
 
\medskip

Again by \cite{parker} we have that the only possibilities 
for $\fg_{r,0}$ are:
\[
\fg_{r,0}=\fso_\R(2,2m-2)\oplus {\fsp}_n(\R)
\]
or
\[
\fg_{r,0}=\fso^\ast(2m)\oplus {\fsp}_n(\R)
\]

\medskip

The corresponding maximal compact subalgebra in the first case is
\[
\fk_{r,0}= \fso_\R(2) \oplus \fso_\R(2m-2) \oplus \fu(n)
\]

\medskip

Choose the simple system:
$$
\Pi=\{ \ep_1 - \ep_2, \dots , \ep_{m-1}-\ep_m, \ep_{m-1}+\ep_m, 
\de_1 - \de_2, \dots , \de_{n-1}-\de_n, \de_{n}-\ep_1\}
$$
We have one simple non-compact even root $\ep_1 - \ep_2$ 
and one non-compact simple odd root: $\de_{n}-\ep_1$. 
\begin{align*}
P_{n,0}= & \{\ep_1 \pm \ep_j\mid 1<j\leq m \}
\cup\{ \de_i+\de_j\mid1\leq i,j\leq n \}\\ 
P_{n,1}= & \{ \de_i \pm \ep_j\mid 1\leq j\leq m \,,\, 1\leq i \leq n \}\\ 
P_{k}= &
\{ \ep_i \pm \ep_j\mid 1<i<j\leq m \}\cup
\{ \de_i - \de_j \mid 1\leq i<j\leq n\}
\end{align*}

The calculation of $\ad(\fk_0)$-stability is exactly as before
and similarly for the verification of the property (2) 
in Def. \ref{adm-def}.

\medskip
We now go to the second case. 

The maximal compact subalgebra is
\[
\fk_{r,0}= \fu(m) \oplus \fu(n)
\]

\medskip

Choose the same simple system as before:
$$
\Pi=\{ \ep_1 - \ep_2, \dots , \ep_{m-1}-\ep_m, \ep_{m-1}+\ep_m, 
\de_1 - \de_2, \dots , \de_{n-1}-\de_n, \de_{n}-\ep_1\}
$$
Now the simple non-compact even root $\ep_{m-1} + \ep_m$
while 
the non-compact simple odd root is as before $\de_{n}-\ep_1$. 
\begin{align*}
P_{n,0} = & \{\ep_i + \ep_j\mid 1\leq i <j \leq m\}
\cup\{ \de_i+\de_j\mid 1\leq i,j\leq n \}\\ 
P_{n,1} = & \{\de_i \pm \ep_j\mid 1\leq i \leq n\,,\, 1\leq j \leq m \}\\ 
P_{k} = &\{ \ep_i - \ep_j\mid 1<i<j\leq m \}\cup 
\{\de_i - \de_j\mid  1\leq i<j\leq n\}
\end{align*}

The calculation of $\ad(\fk_0)$-stability is exactly as before
and similarly for the verification of the property (2) 
in Def. \ref{adm-def}.

\bigskip
\noindent
{\bf Type $C$}. 
{\bf Case:} $C(n)={\rosp_\C(2|2n-2)}$. 
We have only one possible real form extending to a real form of
the whole $\fg$ namely:
$$
\fg_{r,0}=\fso_\R(2) \oplus {\fsp}_{n-1}(\R)
$$
Choose the simple system:
$$
\Pi=\{ \ep - \de_1, 
\de_1 - \de_2, \dots , \de_{n-2}-\de_{n-1}, 2\de_{n-1}\}
$$

We have one simple non-compact odd root $\ep_1 - \de_1$ 
and one non-compact simple even root: $2\de_{n-1}$. 
\begin{align*}
P_{n,0}= & \{ \de_i+\de_j \mid 1\leq i,j\leq n-1 \}\\ 
P_{n,1}= & \{\epsilon\pm \delta_j\mid 1\leq j \leq n-1\}\\ 
P_{k}=& \{\de_i - \de_j\mid 1\leq i <j\leq n-1\} 
\end{align*}

The verification of the two properties $(1)$ and $(2)$ listed
above is the same as in the $B$ case.

\bigskip

We now examine the exceptional Lie superalgebras of
classical type.

\medskip\noindent
{\bf Case:} 
$D(2,1;\alpha)$. 
We have two possible real forms of 
$D(2,1;\alpha)_0= \fsl_2(\C) \oplus \fsl_2(\C) \oplus\fsl_2(\C)$
admitting an extension to the whole $D(2,1;\alpha)$ (see \cite{parker})
namely:
$$
\fg_{r,0}=\fsl_2(\R) \oplus \fsl_2(\R) \oplus\fsl_2(\R), \qquad
\hbox{and} \qquad
\fg_{r,0}=\fsl_2(\R) \oplus \fsu(2) \oplus \fsu(2).
$$
We first examine the case
$\fg_{r,0}=\fsl_2(\R) \oplus \fsl_2(\R) \oplus\fsl_2(\R)$.
The root system is:
$$
\Delta_0=\{\pm 2\ep_i \mid i=1,2,3\}, \qquad 
\Delta_1=\{\pm \ep_1 \pm \ep_2\pm \ep_3\}
$$
All roots are non-compact and the positive roots corresponding to
the simple system:
$$
\Pi=\{\ep_1 + \ep_2 + \ep_3, -2\ep_2, -2\ep_3 \}
$$
form an admissible system. In fact the only thing to check
is that $[\fp_1^+,\fp_1^+] \subset \fp^+_0$,
where ${P_{n,1}=\{\ep_1 \pm \ep_2 \pm \ep_3\}}$ and ${P_{n,0}}$ consists
of the positive even roots.
This is immediate. 

\medskip

Consider now the case $\fg_{r,0}=\fsl_2(\R) \oplus\fsu(2) \oplus \fsu(2)$
and fix the positive system:
$$
\Pi=\{\ep_1 + \ep_2 + \ep_3, -2\ep_2, -2\ep_3 \}
$$
where $\ep_1+ \ep_2 + \ep_3$ is non-compact, while 
$-2\ep_2$, $-2\ep_3$ are compact.
We have that $\fp^+$ is spanned by 
the root spaces corresponding to the roots:
$$
2\ep_1, \quad \ep_1+\ep_2+\ep_3, \quad  \ep_1-\ep_2-\ep_3, \quad  
\ep_1-\ep_2+\ep_3, \quad \ep_1+\ep_2-\ep_3.  
$$
With such a choice we have that $\fp^+$ is $\ad(\fk_0)$-stable
and $[\fp_1^+,\fp_1^+]\subset \fp_0^+$. 

\bigskip

\medskip\noindent
{\bf Case:} $F(4)$.  
The root system is:
$$
\Delta_0=\{\pm \ep_i \pm \ep_j, \, \pm 
\ep_i, \pm \delta, \quad i=1,2,3\}, \qquad 
\Delta_1=\{1/2(\pm \ep_1 \pm \ep_2\pm \ep_3 \pm \de)\}
$$
$\ep_1$, $\ep_2$, $\ep_3$ are the roots corresponding to $\fso_7(\C)$,
while $\de$ corresponds to $\fsl_2(\C)$.
Choose
the simple system (refer to \cite{ka} pg 53):
$$
\Pi=\{1/2(\ep_1 + \ep_2 + \ep_3+\de), -\ep_1, \ep_1-\ep_2, \ep_2-\ep_3 \}
$$
We have two real forms of the even part 
$F(4)_0=\rsl_2(\C) \oplus \fso_7(\C)$ which extend to the whole $F(4)$:
$$
\fg_{r,0}=\fsl_2(\R) \oplus \fso_\R(7), \qquad
\fg_{r,0}=\fsu(2) \oplus \fso_\R(2,5)
$$
Let us first examine 
$$
\fg_{r,0}=\fsl_2(\R) \oplus \fso_\R(7), \qquad \fk_{r}=\fso_\R(2)
\oplus \fso_\R(7)
$$
We have that $1/2(\ep_1 + \ep_2 + \ep_3+\de)$ is the only 
non-compact simple root.
With such a choice: 
\begin{align*}
P_{k}&=\{\pm \ep_i - \ep_j, \, 1 \leq i<j \leq 3\} \, \cup \, 
\{-\ep_i, \, 1\leq i \leq 3 \} \\ 
P_{n,0}&=\{\de\}, \\ 
P_{n,1}&=\{1/2(\pm \ep_1 \pm \ep_2 \pm \ep_3+\de)\} 
\end{align*}
One can easily check that such $\fp^+$ is $\ad(\fk)$-invariant
and $[\fp^+_1, \fp_1^+]\subset \fp_0^+$.

\medskip

We now consider the case:
$$
\fg_{r,0}=\fsu(2) \oplus \fso_\R(2,5), \qquad 
\fk_{r}=\fsu(2) \oplus \fso_\R(2) \oplus \fso(5)
$$
The simple root system described
above is not suitable, since the corresponding even positive system 
is not admissible for the given real form 
(notice that both the roots $\ep_1-\ep_2$ and $-\ep_3$ are
positive and compare with case $B(m,n)$ discussed above). 
In order to obtain the
correct simple system we need to transform
it using the (unique) element $w$ of the Weyl group such
that $w(\Pi_0)=\Pi_0'$, where $\Pi_0'$ leads to an admissible
system for the real form $\fsl_2(\R) \oplus \fso_\R(2,5)$ of
$F(4)$:
$$
\Pi_0=\{-\ep_1, \ep_1-\ep_2, \ep_2-\ep_3 \} \mapsto
\Pi_0'=\{\ep_3, \ep_1-\ep_2, \ep_2-\ep_3 \}
$$
Now, taking the image of the simple system $\Pi$ under $w$ we obtain:
$$
\Pi':=w(\Pi)=\{\ep_3, \ep_1-\ep_2, \ep_2-\ep_3, (1/2)(w(\ep_1+\ep_2+\ep_3)+\de) 
\}
$$
The odd positive roots are then characterized by having $+\de$ and
not $-\de$ in their expression.
We take as the simple non-compact roots:
$$
1/2(w(\ep_1 + \ep_2 + \ep_3)+\de), \qquad \ep_1-\ep_2.
$$ 
With such a choice we obtain:  
\begin{align*}
P_{n,0}&=\{\de, \, \ep_1, \, \ep_1 \pm \ep_j, \, 1 \leq i<j \leq 3\} \\ 
P_{n,1}&=\{1/2(\pm \ep_1 \pm \ep_2 \pm \ep_3+\de)\}  \\ 
P_k&=\{\ep_i \pm \ep_j, \, 1 < i<j \leq 3\} \, \cup \,
\{\ep_i, \, i=2,3\} 
\end{align*}
$\fp^+_0$ is  $\ad(\fk)$-invariant (this is an immediate check, but it also
comes from the ordinary theory), while $\fp^+_1$ is  $\ad(\fk)$-invariant
since no roots in $P_k$ contain $\de$. Finally 
$[\fp^+_1, \fp_1^+]\subset \fp_0^+$.

\bigskip\noindent
{\bf Case:} $G(3)$. The only real form of $G(3)_0$ with
an admissible system is:
$$
\fsl_2(\R) \oplus {\mathcal{G}}_2
$$ 
where ${\mathcal G}_2$ is the compact form of $G_2$. 
Choose
the simple system (refer to \cite{ka} pg 53):
$$
\Pi=\{\de+\ep_1, \ep_2, \ep_3-\ep_2\}
$$
where the linear functions $\ep_i$ correspond to $G_2$, $\ep_1+\ep_2+\ep_3=0$
and $\de$ to $A_1$. The only simple non-compact root is $\de+\ep_1$.
We have: 
\begin{align*}
P_{k}&=\{-\ep_1, \ep_i, \,  1< i\leq 3\} \cup 
\{\ep_3-\ep_2, \, \ep_2-\ep_1, \, \ep_3-\ep_1\} \\ 
P_{n,0}&=\{2\de\} \\ 
P_{n,1}&=\{\de, \, \de\pm\ep_i, \, 1\leq i\leq 3\}
\end{align*}
The properties (1) and (2) of Def. \ref{adm-def} are verified by
an easy calculation.

\medskip
No other Lie superalgebra of classical type satisfying our hypothesis
admits a real form whose even part corresponds to
an hermitian symmetric space,
hence our case by case analysis ends here.
\end{proof}

\subsection{Harish-Chandra modules} \label{hcmod-sec}

Consider {a} $(\fg,\fk)$ pair of Lie superalgebras as above, with
${\rm rk}\ \fg={\rm rk}\ \fk$ and
$\fk=\fk_0$. We can choose a CSA $\frak h=\fh_0$ so that
$$
\frak h\subset \frak k\subset \frak g
$$
and $\frak h$ will be a CSA of both $\frak k$ and $\frak g$. 
We fix a positive system $P=P_0 \cup P_1$ of roots for $(\frak g, \frak h)$.

\begin{definition}
Let the complex super vector space $V$ be a $\fg$-module. 
We say that $V$ is a \textit{$(\fg, \fk)$-module} if 
$$
V=\sum_{\theta \in \Theta} V(\theta)
$$ 
where the sum is algebraic and direct, $\Theta$ denotes the
set of equivalence classes of the finite dimensional irreducible
representations of $\fk$ and $V(\theta)$ is the sum of all
representations occurring in $V$ and 
belonging to the class 
$\theta \in \Theta$.
We say that the  $(\fg, \fk)$-module $V$ is an \textit{Harish-Chandra
module} (or \textit{HC-module} for short) if each 
$V(\theta)$ is finite dimensional and $V$ is finitely generated as
$\cU(\fg)$ module. 
\end{definition}

We are interested in highest weight modules (with respect to $P$) 
which are also HC-modules (see Ref. \cite{musson} for an
exhaustive introduction to highest weight modules).

\begin{proposition} \label{infinitesimalhc} 
Let $U$ be a highest weight $\fg$-module with highest weight $\lambda$
and highest weight vector $v$. The following are equivalent:

\begin{enumerate}
\item $\dim(\cU(\fk)v)<\infty$;
\item $U$ is a $(\fg,\fk)$-module;
\item $U$ is a{n} HC-module.
\end{enumerate}

\noindent If these conditions are true, then $\cU(\fk)v$ is an irreducible
$\fk$-module.
\end{proposition}

\begin{proof}  
The proof is very similar to the classical case; we shall nevertheless
rewrite it for clarity of exposition. By the ordinary theory we know
that the action of $\fk$ (which is an ordinary reductive Lie algebra) 
is decomposable if and only if  the center $\fc$ of $\fk$ acts semisimply. 
Given $\fc \subset \fh$, if $U$ is an highest weight module
(on which $\fh$ acts diagonally) we have that $\fc$ acts semisimply on $U$.
We now show $(1) \implies (2)$. Let $U^\fk$ denote the $\fk$-finite
vectors in $U$ (i.e., those vectors lying in a finite dimensional
$\fk$-stable subspace). It is easy to check $U^\fk$ is a submodule and
since the highest weight vector $v \in U^\fk$, we have $U=U^\fk$ and
this proves $(2)$.  We now show  $(2) \implies (3)$. 
According to the previous definition, we need to show
that $U(\theta)$ is finite dimensional. By contradiction, assume this
is not the case and let $\mu$ a weight of $U(\theta)$. Such
a weight occurs with infinite multiplicity, and we have 
$\dim(U(\theta)_\mu)=\infty$. This is in contradiction with the 
well known fact that in a highest weight module the weights occur 
with finite multiplicities. 
$(3) \implies (1)$ is straightforward.
We now go about the proof of the irreducibility of
$\cU(\fk)v$. As we remarked at
the beginning $\fc$ acts as scalar: $cw=\lambda(c)w$, for $c \in \fc$
and $w \in  \cU(\fk)v$. So we have $\cU(\fk)v=\cU(\fk')v$ with
$\fk'=[\fk,\fk]$. $\cU(\fk')v$ is a finite dimensional highest weight
module for the semisimple Lie algebra $\fk'$ and consequently
it is irreducible. 
\end{proof}

We now want to define the universal super HC-module of highest weight 
$\lambda$. Choose $P=P_k {\cup} P_{n}$, a positive admissible system.

\medskip 
Let $\lambda\in \fh^\ast$ be such that $\lambda(H_\alpha)$ is an integer 
$\ge 0$ for all $\alpha\in P_k$. Let $F=F_\lambda$ be the irreducible 
finite dimensional module for $\fk$ of highest weight $\lambda$. 
Note that $\lambda(H_\beta)$ can be arbitrary for positive non-compact 
roots $\beta$. Write 
$$
\fq=\fk \oplus \fp^+.
$$
Recall that $[\fk, \fp^+]\subset \frak p^+$ and so we can turn 
$F$ into a left $\fq$-module by letting $\fp^+$ act trivially. 
Define

\begin{equation}
U^\lambda=\cU(\fg)\otimes _{\cU(\fq)}F\label{def::univHC}
\end{equation}

\noindent and view $U^\lambda$ as a $\cU(\fg)$-module by left action
$
a(b\otimes f)=ab\otimes f.
$

Let

\begin{equation}
\rho=\frac{1}{2}\sum _{\alpha \in P_0}\alpha-\frac{1}{2}\sum _{\alpha \in P_1}\alpha. 
\label{eq::defdelta}
\end{equation}

\begin{theorem} \label{theorem4}
Let the notation be as above.
\begin{enumerate}
\item $U^\lambda$ is the universal HC-module of highest weight $\lambda$. 

\item $U^\lambda$ has a unique irreducible quotient, which is the
unique (up to isomorphism) irreducible
highest weight Harish-Chandra module with highest
weight $\lambda$.

\end{enumerate}

\end{theorem} 

\begin{proof}
$(1)$. Let $f^\lambda$ be the highest weight vector for $F$. Then
$1 \otimes f^\lambda$ is such that:
$$
X_\al (1 \otimes f^\lambda)=0, \quad \al >0, \qquad 
\cU(\fg)(1 \otimes f^\lambda) = U^\lambda
$$
hence $1 \otimes f^\lambda$ is the highest weight vector of
$U^\lambda$. To prove universality, let $V$ be a highest weight
HC-module with highest weight vector $v$.The map
$$
F \lra V, \quad uf^\lambda \, \mapsto \, uv, \qquad (a \in \cU(\fg), \, v \in F)
$$
extends to a linear map $L:\cU(\fg) \otimes_\C F \lra V$ which is
onto.
Since $L(au \otimes f)-L(a\otimes uf)=0$ for all
$a \in \cU(\fg)$,  $u \in \cU(\fq)$, $f \in F$, it follows that $L$ descends
to a map $U^\lambda \lra V$ which is obviously a $\cU(\fg)$-module map.

\medskip
$(2)$ It follows from the standard theory of highest weight modules
that $U^\lambda$ has a unique irreducible quotient, which is a highest
 weight HC-module of highest weight $\lambda$. It is the unique
irreducible highest weight HC-module of highest weight $\lambda$ by
universality (point 1).

\end{proof}

We shall now study the structure of $U^\lambda$ as a $\fq$-module for 
arbitrary $\lambda$ with $\lambda(H_\alpha)$ an integer $\ge 0$ for all 
$\alpha\in P_k$. For this we need a standard lemma.

\begin{lemma} \label{lemma4a}
Let $g$ be a field and $A, B$ algebras over $g$. Suppose $B\subset A, A$ is a free right $B$-module, $F$ a left $B$-module, and $V=A\otimes _BF$. If $(a_i)$ is a free basis for $A$ as a right $B$-module, and $L=\sum _ig.a_i$, then the map taking $\ell\otimes_gf$ to $\ell\otimes_Bf$ is a linear isomorphism of $L\otimes_gF$ with $V$.
\end{lemma}

We regard $\cU(\fp^-)\otimes F$ as a $\cU(\fp^-)$-module by 
$a,b\otimes f\mapsto ab\otimes f$. Since $\fp^-$ is stable under 
${\rm ad}\,\fk$ we may view 
$\cU(\fp^-)\otimes F$ as a $\fk$-module also. 

\begin{corollary} 
\label{corol::intertwcorol}
The map $\varphi : a\otimes f\mapsto a\otimes_{\cU(\fq)}f$ is a linear isomorphism of $\cU(\fp^-)\otimes F$ with $U^\lambda$ that intertwines both the actions of $\cU(\fp^-)$ and $\cU(\fk)$. In particular, $U^\lambda$ is a free $\cU(\fp^-)$-module with basis $1\otimes _{\cU(\fq)}f_j$ where $(f_j)$ is a basis for $F$.
\end{corollary}
\begin{proof} Since $\fg=\fp^-\oplus \fq$ it follows that $a\otimes b\mapsto ab$ is a linear isomorphism of $\cU(\fp^-)\otimes \cU(\fq)$ with $\cU(\fg)$. It is clear from this that $\cU(\fg)$ is a free right $\cU(\fq)$-module, and that any basis of $\cU(\fp^-)$ is a free right $\cU(\fq)$-basis for $\cU(\fg)$. Lemma 
\ref{lemma4a} now applies and shows that $\varphi$ is an isomorphism. It obviously commutes with the action of $\cU(\fp^-)$. The verification of the commutativity with respect to $\fk$ is also straightforward.\end{proof}

\subsection{Irreducibility of the universal HC-module}

In this section we want to prove the following theorem which 
gives a sufficient condition for the irreducibility of the universal HC-module. 
For similar modules in the classical setting, 
see Theorem 3 in \cite{hc}, IV (1955), pg 770.

\medskip\noindent
Let our hypotheses and notation be as in Sec. \ref{hcmod-sec}.

\begin{theorem}\label{thm4}
Let $\fg$ be one of the complex basic Lie superalgebras in the list \ref{list:SLA}, and let $U^\lambda$ be the universal Harish-Chandra module, with highest weight $\lambda$ associated
with the (finite dimensional) representation $F$ of $\fk$, as defined by~\eqref{def::univHC}. Let $\rho$ be  as in eq.~\eqref{eq::defdelta}.
If 

\beq\label{cond-irr}
(\lambda+\rho)(H_\gamma)\leq 0\quad \hbox{for all} \quad 
\gamma\in P_{n} \quad  \hbox{and} \quad <0 \quad \hbox{for} \quad \gamma
\quad \hbox{isotropic}, 
\eeq

\noindent then $U^\lambda$ is irreducible. 
\end{theorem}

The proof of this theorem relies on a result stated by V. Kac in  \cite{ka4}
 and proved by M. Gorelik in \cite{go}, that we give 
{here, as Theorem \ref{theo::KacGor},} without proof 
(see also \cite{musson} Sec. 13.2). 
Here $W$ denotes the Weyl group of $\fg_0$ and 
$S(\fh)^W$ denotes the set of $W$-invariant symmetric tensors on $\fh$.
In the following we will use the identification of  $S(\fh)$ with 
the superalgebra of polynomials $\textrm{Pol}(\fh^\ast)$ without mention.

\begin{theorem}
\label{theo::KacGor}
The Harish-Chandra isomorphism
\begin{align*}
\beta\colon Z(\fg)\to S(\fh)^W
\end{align*}
for basic Lie superalgebras
identifies the center
$Z(\fg)$ of the universal enveloping
algebra with the subalgebra $I(\fh)$ of $S(\fh)^W$:
$$
I(\fh)=\{ \phi \in S(\fh)^W \, | \, \phi(\lambda + t \al)=
\phi(\lambda), \,\forall\, \lambda \in \langle \al \rangle^\perp, 
\, \al \, \hbox{isotropic},\, \forall\,t\in \C\}
$$
(a root $\al$ is {\sl isotropic} if $\langle \al, \al \rangle=0$). 
\end{theorem}

This theorem 
allows us to show
that when a weight $\lambda$ is {\sl typical} that is
$$
\langle \lambda+\rho, \al \rangle \neq 0\quad \forall\, \alpha\, 
\mathrm{ isotropic}
$$
all of the {maximal weights} (with respect to
a highest weight vector in a submodule) in the highest weight representation
of highest weight $\lambda$ are conjugate under the affine  action of the
Weyl group, that we denote as $s . \lambda$.
Notice that the condition (\ref{cond-irr}) 
implies that the weight $\lambda$ in Theorem \ref{thm4}
is typical.

We now recall few results (see \cite{musson} Ch. 13).

\begin{lemma}
Let $g = \prod_{\al \, isotropic} h_\al \in S(\fh)$,
where $h_\al \in \fh$ is defined by the property
$h_\al(\mu)=\langle \mu, \al \rangle$. Then:
\begin{enumerate}
\item $\C+g S(\fh)^W \subset I(\fh) \subset S(\fh)^W$.

\item $I(\fh)_g=S(\fh)^W_g \supset S(\fh)^W$.
\end{enumerate}
where $I(\fh)_g$ and $S(\fh)^W_g$ denote respectively  
the localizations of $I(\fh)$ and $S(\fh)$ at the set
\(
G\coloneqq \{ g^k \mid k\geq 0 \}.
\)
\end{lemma}

\begin{proof} (1). Clearly $\C\subset I(\fh)$ and any easy check  shows that  $g S(\fh)^W$ is a subalgebra contained in $I(\fh)$.  Since $I(\fh)$ is a subalgebra we have (1).
As for (2), it follows from the inclusions in (1) by
noticing that  the
localization at $G$ is the same for $\C+g S(\fh)^W$
and $S(\fh)^W$.
\end{proof}

We now introduce the {\sl infinitesimal character}
\begin{align*}
\chi_\lambda: Z(\fg) & \lra \C
\end{align*}
defined as $\chi_\lambda(z)=(\beta(z))(\lambda+\rho)$, 
where $\beta$ is the Harish-Chandra isomorphism.

\begin{prop}\label{lem1}
Let $\lambda$ be typical.
Then $\chi_\lambda=\chi_\mu$ implies
$\mu=s.\lambda$ for some $s\in W$.
 \end{prop}

\begin{proof} The infinitesimal character $\chi_\lambda$ may be
thought (via the HC isomorphism) as defined on $I(\fh)$ since
$Z(\fg) \cong I(\fh)$, by Theorem~\ref{theo::KacGor}. 
Since $\lambda$ is typical, we have
${\chi_{\lambda}(g) \neq 0}$, hence we may extend
(uniquely) ${\chi_{\lambda}}$ to $I(\fh)_g =S(\fh)_g^W \supset S(\fh)^W$. Hence
${\chi_{\lambda}=\chi_{\mu}}$ on $S(\fh)^W$. This implies {$\mu=s.\lambda$}
(from classical considerations, see, for example, 
\cite{Knapp}, Ch. 5).
\end{proof}

We now approach the proof of \ref{thm4} with some lemmas.

\begin{lemma} \label{lem2}
Let $\lambda$ be the highest weight as in \ref{thm4} and let $\mu$
be a maximal weight of $U^\lambda$ with respect to the admissible system
$P=P_k \coprod P_n$. Then 

\begin{equation}
P^-=P_k \coprod -P_n\label{eq:newpossyst}
\end{equation}
 is also a positive system and
$\lambda > \mu$ with respect to $P^-_0$.
\end{lemma}

\begin{proof} The fact $P^-$ is a positive system comes from
the fact that $P^-=-s_0P$, for $s_0$ the longest element in $W_k$.
In fact $-s_0(P)=-s_0(P_k) \coprod -s_0(P_n)$ {and} $-s_0(P_k)=P_k$,
while $-s_0(P_n)=-P_n$. This is because $P$ is chosen admissible,
hence the roots in $P_n$ represent the weights of the adjoint representation
of $\fk$ on $\fp^+$, 
hence they are permuted by the action of $W_k$ the
Weyl group of $\fk$.

\medskip
Now we turn to the second statement. Let $v_\lambda$ and $v_\mu$ 
maximal vectors with weight $\lambda$ and $\mu$ respectively.
Hence $z \in Z(\fg)$ acts as
multiplication by the scalars $\chi_\lambda(z)$ and  $\chi_\mu(z)$ respectively. 
Since $v_\lambda$ is the highest weight {vector}, 
$z$ acts as $\chi_\lambda(z)$ on 
the whole $U^\lambda$. Hence  the two scalars $\chi_\lambda(z)$ and 
$\chi_\mu(z)$ have to be the same (since
$\mathcal{U}(\fg) v_\mu $ is a submodule of $U^\lambda$).

Then, by the previous lemma $s.\lambda=\mu$, that is
$\mu+\rho=s(\lambda+\rho)$,
for $s \in W$. 
Since the hypothesis of 
\ref{thm4} guarantee that $\lambda+\rho$ is dominant with respect
to $P^-$, by
usual facts on groups of reflections (see \cite{vsv1} Appendix to Ch. 4)
we have that 
$\lambda-\mu=\lambda+\rho-s(\lambda+\rho)$ is sum of
simple roots, but since $\lambda - \mu$ is even, it will be
the sum of simple roots of $P_0^-$.
\end{proof}

We now make some remarks on the simple systems of $P$ and $P^-$.
Let
$$
S_0=\{\al_1,\dots \al_A, \be_1, \dots \be_B\}
$$
be the simple system for $P_0 \subset P$ ($P_0$ the positive
admissible even system contained in $P$). We denote by
$\al_i$ the compact roots and by $\be_j$ the non-compact (even) roots.

\medskip\noindent
Let us now consider $S$ a simple system for $P$.
Our simple system $S$ is then written as:
$$
S = \{ \al_1,\dots \al_a, \be_1, \dots \be_b, \gamma_1, \dots \gamma_c\}
$$
where the $\gamma_i$'s are simple odd roots, while the number of compact
and non-compact roots may change since we have introduced the odd
roots, 
that is $a \leq A$, $b \leq B$ in general.

As a word of caution let us notice that $S_0$ 
may not be the even part of $S$, since  
in general $S_0 \not\subseteq S$.

\begin{lemma} \label{lem4}
The simple system $S$ contains the same compact roots as $S_0$.
In other words $a=A$.
\end{lemma}

\begin{proof}
Let us assume by contradiction that, say, {$\al_r$} is not in $S$.
Then $\al_r$ is decomposable, so we can write is as $\al_r=\de_1+
\de_2$, where $\de_1$, $\de_2$ are odd (necessarily or otherwise
$\al_{ r}$ would be decomposable in the even positive system).
But the positive system $P$ is admissible, and this is not possible
by the discussion after Definition~\ref{adm-def}. 
\end{proof}

\begin{lemma} \label{lem5}
Let $S_0$ be the simple system for $P_0$ as above.
Then
$$
S^-_0= \{ \al_1,\dots \al_A, -\be'_1, \dots -\be'_B\}
$$
is the simple system for the positive system $P^-_0$ defined by 
\eqref{eq:newpossyst} and $\be_i'$ are the non-compact
roots in $P_{n,0}$ which are the highest weights for the  adjoint  
representation of $\fk$ on $\fp_0^+$ with respect to the positive system $P$.
In particular there exist positive integers $m_i$ such that 
$\be'_i=\be_i+\sum_{\al_i>0} m_i\al_i$.
\end{lemma}

\begin{proof}
This fact is entirely classical.
\end{proof}

We now go to the proof of the main result.

\begin{proof} (Theorem \ref{thm4})
Let us assume by contradiction that $U^\lambda$ has a submodule $M$.
Let $\mu$ be a maximal weight of such submodule, so we may as well
replace $M$ with the cyclic module generated by a weight
vector $v_\mu$. 
{We start by showing that $\lambda-\mu$
is a sum of compact roots.}

\medskip
Since $U^\lambda$ is a highest weight module with respect to $P$,
we have that $\lambda-\mu$ is sum of simple roots of $P$.
On the other hand, by Lemma \ref{lem2} we also have that 
$\lambda - \mu$ is the sum of simple roots of $P_0^-$ {
$$
S^-_0=\{\al_1 \dots \al_A, -\be_1', \dots -\be_B^\prime\}
$$ }
where $\be_i'=\be_i+\sum_{\alpha_i>0} m_i \al_i$ (by Lemma~\ref{lem5}).

\medskip
Hence we can write:
$$
\lambda - \mu= \sum_{i=1}^A a_i \alpha_i+\sum_{j=1}^b 
b_j \beta_j + \sum_{k=1}^cc_k\gamma_k, \qquad a_i, b_j, c_k \in \Z_{\geq 0}
$$
and
$$
\lambda - \mu = \sum_{_i=1}^A a'_i \alpha_i-\sum_{j =1}^B  
b'_j \beta_j  \qquad  b'_j  \in \Z_{\geq 0}
$$
where in the second expression we use $\be'_j$ and then we substitute
its expression in terms of $\be_j$ and compact roots.

\medskip

By 
comparing the two expressions we have:
$$
\sum_{i=1}^A a_i''\al_i + 
\sum_{j=1}^b(b_j+b_j')\beta_j+ \sum_{j=b+1}^B b_j'\be_j+
\sum c_k \gamma_k=0
$$
where the coefficients are all positive, with the exception of the $a''_i$'s.

For $j=b+1, \dots B$ we have:
$$
\be_j=\sum_{i=1}^A m_{ji}\al_i+ \sum_{k=1}^b n_{jk}\be_k+
\sum_{r=1}^c p_{jr}\gamma_r
$$

If we substitute, we get:
$$
\sum_{i=1}^A a_i'''\al_i + 
\sum_{l=1}^b(b_l+b_l'+\sum_{j=b+1}^B b_j'n_{jl})\be_l+
\sum_{k=1}^c (c_k {+}\sum_{j=b+1}^B b_j'p_{jk})\gamma_k=0
$$
where the coefficients are all positive, with the exception of the $a'''_i$'s.

Hence we obtain $b_l=b'_l=c_k=0$, that is $\lambda-\mu$ is the
sum of compact roots.

\medskip
Now we go back to the highest weight vector $v_\mu$ 
of the submodule $M\subseteq\mathcal{U}(\fg)v_\lambda$.
$v_\mu$ is a linear combination of $X_{-\theta_1}\dots X_{-\theta_m}v_\lambda$ 
where each $\theta_j$ is {in $P$}. 
$$
\lambda-\mu=\theta_1+...+\theta_m
$$ 
{Writing each $\theta_j$
as a linear combination of 
$\alpha_i(1\le i\le a)$, $\beta_j(1\le j\le b)$ and $\gamma_k(1 \leq k \leq c)$ 
with integer coefficients $\ge 0$, and noting that 
$\lambda-\mu$ does not involve the $\beta_j$ and $\ga_k$, we conclude that each 
$\theta_j$ does not involve any $\beta_j$ or $\ga_k$. In other words, 
$v_\mu \in \cU(\frak k)v_\lambda$. 
But then $v_\mu$ must be a multiple of $v_\lambda$, 
showing that $M=U^\lambda$.} 
\end{proof}

\subsection{Super Character} \label{char-sec}

In this section we compute the character for the
universal Harish-Chandra module $U^\lambda$ described in the previous sections.

\medskip
Let the notation be as above. If $M$ is an $\fh$-module with finite 
multiplicities, its character is 
given by:
$$
\ch(M)= \sum_{\mu \in \fh^*} (\dim V_\mu) e^{\mu}
$$
 By Corollary \ref{corol::intertwcorol} 
we have  the $\fh$-module isomorphism
$$
U^\lambda \cong \cU(\fp^-) \otimes F
$$
This implies
$$
\ch(U^\lambda)=\ch(\cU(\fp^-)) \ch(F)
$$
We further know that $\cU(\fp^-)\simeq \cU(\fp^-_0) \otimes \wedge(\fp_1^-)$, 
as $\fh$-modules,
hence we can immediately write:
$$
\ch(U^\lambda)=\ch(\cU(\fp^-_0))\, \ch(\wedge(\fp_1^-))\, \ch(F)
$$
Let us quickly recall the following well known expressions 
\begin{align*}
\ch(\cU(\fp^-_0)) & =\prod_{\eta \in P_{n,0}}(1-e^{-\eta})^{-1}=\frac{e^{\rho_{n,0}}}{\Delta_{n,0}}\\
\ch(F) & = \frac{\sum_{s\in W_k}\epsilon(s) e^{s(\lambda+\rho_{k,0})}}
{e^{\rho_{k,0}}\Pi_{\eta\in P_{k,0}}(1-e^{-\eta})}= 
\frac{\sum_{s\in W_k}\epsilon(s) e^{s(\lambda+\rho_{k,0})}}{\Delta_{k,0}}
\end{align*}
where $\epsilon(s)=\det{(s)}$, and  the Weyl denominators are defined as:
\begin{align*}
\Delta_{n,0}&= e^{\rho_{n,0}}\prod_{\eta \in P_{n,0}}(1-e^{-\eta})  \\
\Delta_{k,0}&=e^{\rho_{k,0}}\prod_{\eta \in P_{k,0}}(1-e^{-\eta}) 
\end{align*}
where
$$
\rho_{n,0} = (1/2)\sum_{\al \in P_{n,0}} \al, \qquad
\rho_{k,0} = (1/2)\sum_{\al \in P_{k,0}} \al.
$$
and $P_{n,0}$, $P_{k,0}$ correspond respectively to the non-compact
and compact roots in the (admissible) positive system $P_0$ of $\fg_0$.
Hence:
$$
\ch(F)\ch(\cU(\fp^-_0))=\left( \sum_{s \in W_k} \ep(s)
\frac{e^{s(\lambda + \rho_{k,0})}} {\Delta_{k,0}}\right) \frac{e^{\rho_{n,0}}}
{\Delta_{n,0}}=
\sum_{s \in W_k} \ep(s)\frac{e^{s(\lambda + \rho_{k,0})+\rho_{n,0}}} 
{\Delta_{k,0}\Delta_{n,0}}
$$
(for the classical expression of $\ch(F)$, see, for example, \cite[Ch.4]{vsv1}). Notice that for each $s\in W_k$, $s(\lambda + \rho_{k,0})+\rho_{n,0}=
s(\lambda + \rho_{k,0}+\rho_{n,0})$, hence we write:
$$
\ch(F)\ch(\cU(\fp^-_0))\,=\,\sum_{s \in W_k} \ep(s)
\frac{e^{s(\lambda + \rho_{0})}} {\Delta_{0}}
$$
where $\rho_0=(1/2) \sum_{\al \in P_0} \al$ and $\Delta_0=\Delta_{k,0}\Delta_{n,0}$.

\medskip
We now compute $\ch(\wedge(\fp_1^-))$. 
$$
\ch(\wedge(\fp_1^-))= \prod_{\eta \in P_{1,n}} (1+e^{-\eta})
$$
where $P_{1,n}$ are all the positive non-compact roots.
Notice that in a PBW basis the odd variables appear at most with
degree one. Hence:
$$
\ch(U^\lambda)\, = \,\sum_{s \in W_k} \ep(s)
\frac{e^{s(\lambda + \rho_{0})}} {\Delta_{0}}
\prod_{\eta \in P_{1,n}} (1+e^{-\eta})
$$

\section{Preliminaries on supergeometry}
\label{prelim-sec}

In this section we discuss few facts of supergeometry, 
we need in the following.
We refer the reader for a complete basic treatment of this subject to
\cite{ccf},   \cite{Kostant}, \cite{Leites}, \cite{ma}, \cite{vsv2}.
We are interested
in the {\sl analytic} category of supermanifolds, 
both real and complex, denoted  by
$\smflds_\R$ and $\smflds_\C$ respectively, or by $\smflds$ when
the statement holds in both categories.

\begin{definition} \label{coveringspace}
Let $M=(\red{M},\cO_M)$ and $N=(\red{N},\cO_N)$ be connected 
supermanifolds, i.e. their reduced spaces 
\(\red{M}\) and \(\red{N}\) are connected. 
Suppose $\pi:M\rightarrow N$  is a morphism such that

\begin{enumerate}
\item $\red{\pi}: \red{M}\to \red{N}$ is surjective and is a covering map;
\item for each $x\in \red{N}$ we may choose an  open submanifold 
$\red{U}\subseteq \red{N}$, $x \in \red{U}$, such that 
$\red{\pi}^{-1}(\red{U})=\bigsqcup_i \red{V_i}$, where each \(\red{V}_i \to \red{U}\) is an analytic isomorphism; furthermore
if \(V_i\) is the open subsupermanifold of \(N\) defined by \(\red{V}_i\),
then, for each \(i\), $\pi_{\left.\right|_{V_i}}:V_i \rightarrow U$ 
is an analytic isomorphism;
\end{enumerate}
then we say that $(M,\pi)$ is a {\it covering space} of  $N$.
\end{definition}

\begin{remark}
If $\pi:M\rightarrow N$ is a covering, then
$\red{\pi}:\, \red{M} \rightarrow \red{N}$ is a covering. Moreover, if \(\tau\colon M\to N\) is a morphism, it is a covering map if (and only if) \(\red{\tau}\colon \red{M}\to \red{N}\) is a covering map, \(\dim M=\dim N\) and \(\di \pi\) is  surjective everywhere on \(M\).
\end{remark}

Suppose that $G$ is a SLG. 
We denote with $G_e$ the subsuperLiegroup (subSLG for short) 
of $G$ whose reduced space is 
the identity component of $G$.
(For the relevant notions about quotients see  \cite{bs} and also \cite{ccf}). 

\begin{lemma} \label{lemma2bis}
Suppose $G$ is an 
 analytic connected SLG and $A\subseteq G$ is a closed 
analytic subSLG of 
$G$, $A_e$ its identity component.
Then, in the commutative diagram

$$
\xymatrix{  G  \ar@{->}[rd] \ar@{->}[rr] & & G/A\\ & G/A_e   \ar@{->}[ur] & }
$$

\noindent the map \(G/A_e\to G/A\) is a covering morphism. Moreover
\begin{enumerate}
\item if \(D\) is the discrete even group \(A/A_e\) 
(note that \(A_e\) is open and normal in \(A\)), it acts (from the right) on \(G/A_e\) and commutes with the projection \(G/A_e\to G/A\), acting transitively on the fibers.
\item if we work over \(\R\) and either \(G/A\) or \(G/A_e\) has a complex  
  structure compatible  with the real analytic structure, the other can be equipped with a unique complex structure compatible with the real analytic structure, such that \(G/A_e\to G/A\) is a complex analytic morphism and so a covering map of the complex supermanifolds.
\end{enumerate}
\end{lemma}

\begin{remark}
\label{remarkraja}
It follows from the above that if \(B\) is a subSLG of \(G\) such that \(B_e=A_e\), then the existence of a complex structure on \(G/B\), \(G\)-invariant and compatible with the real analytic one, implies the existence of such a complex structure on \(G/A_e\) and hence on all the \(G/C\) with \(C_e=A_e\). Moreover, if \(B\subset C\), \(G/B \to G/C\) is a covering map for the complex structures.
\end{remark}

\begin{remark} 
If \(M\) and \(N\) are real analytic supermanifolds and \(\pi\colon M\to N\) is a covering map, then a complex structure on \(N\) can be lifted to one on \(M\) in an obvious fashion. In general, to push down a complex structure 
from \(M\) to \(N\) is more complicated, however in the situation considered above, we can do it, since there is a discrete even group acting as a group of super isomorphisms on \(M\), commuting with \(\pi\), acting transitively on the fibers of \(\pi\).
\end{remark}

We end this section with two results on SLG that we shall need
in the sequel.

\smallskip

We first notice that if $M$ and $N$ are supermanifolds 
and $\psi\colon M\to N$ is a submersion, 
then $\red{\psi}\colon \red{M}\to \red{N}$ is an 
open mapping. Hence \(\red{\psi}(\red{M})\) is open in \(\red{N}\) and defines the  open subsupermanifold of 
$N$ given by
\begin{align}
\label{def::opensub}
\psi(M)\coloneqq \left(
\red{\psi}(\red{M}), \restr{\cO_{N}}{\red{\psi}(\red{M})}
\right)
\end{align}

\begin{lemma} 
\label{lemma1}
Let $M$ be a SLG (real or complex), $A_1$ and $A_2$ closed 
subSLG with $\Lie(A_1) + \Lie(A_2)=\Lie(M)$.
Consider the map 
\[
\alpha:\,A_1\times A_2 \stackrel{i_1\times i_2}
{\hookrightarrow} M\times M \stackrel{\mu}{\rightarrow} M
\]
where $i_j:A_i \lra M$ denotes the canonical embedding of $A_i$ in $M$ and
$\mu$ is the multiplication of the supergroup $M$. 
Then
\begin{enumerate}
\item We have \(\red{A}_1 \red{A}_2= \red{\alpha}(\red{A}_1\times \red{A}_2)\) is open in \(\red{M}\) and defines an open subsupermanifold of \(M\), which we write as \(\alpha(A_1\times A_2)\) or \(A_1 A_2\).
\item If \(\red{A}_1 \cap \red{A}_2=\{e\}\)  and \(\textrm{Lie}(A_1) \cap \textrm{Lie}(A_2)=\{0\}\) then \(\alpha\) is an analytic super isomorphism  of \(A_1\times A_2\) with \(A_1 A_2\)
\item \(A_1\) acts transitively on \(A_1A_2/A_2\) and its stabilizer at \(\pi(e)\) (\(\pi\colon A_1A_2 \to A_1A_2/A_2\) is the natural map) is \(A_1\cap A_2\).
\end{enumerate}
\end{lemma}
\begin{proof}
The map, at the functor of points level, is
\[
\alpha_T \colon (a_1,a_2)\mapsto a_1 a_2 \in M(T), \, a_i\in A_i (T)  
\subset M(T)\,,\, A_1(T)A_2(T)\subset M(T), \, T \in \smflds
\]
We first notice that at the topological point $(e,e)\in \red{A_1\times A_2}$
($e$ denoting the identity element),
\[
(\di\alpha)_{(e,e)} (X_1,X_2) =(\di i_1)_e (X_1)+(\di i_2)_e (X_2)
\]
Since $i_1$ and $i_2$ are injective immersions 
and $\Lie(A_1) + \Lie(A_2)=\Lie(M)$, we have that $\alpha$ is a 
submersion at $(e,e)$. For proving that \(\alpha\) is a submersion at any \((\overline{a}_1,\overline{a}_2)\in \red{A}_1\times \red{A}_2\), it is enough to notice that the diagram 

$$
\xymatrix{  A_1\times A_2  \ar@{->}[d]_t  \ar@{->}[r]^\alpha & G\ar@{->}[d]^s\\ A_1\times A_2   \ar@{->}[r]^\alpha & G }
$$

\noindent is commutative, where \(t\) and \(s\) are given by \(t=\ell(\overline{a}_1) \times r(\overline{a}_2)\), \(s=\ell(\overline{a}_1) \circ r(\overline{a}_2)\), \(\ell\,,\,r\) being left and right translations.

For proving 2., note that \(\textrm{Lie}(\red{A}_1)\cap\textrm{Lie}(\red{A}_2)=\{0\}\), hence \(\textrm{Lie}(M)_0=\textrm{Lie}(\red{A}_1)\oplus \textrm{Lie}(\red{A}_2)\). As \(\red{A}_1 \cap\red{A}_2=\{e\} \), \(\red{\alpha}\) is an analytic  isomorphism. Since \(\textrm{Lie}(A_1)+\textrm{Lie}(A_2)=\textrm{Lie}(M)\), and \(\textrm{Lie}(A_1)\cap\textrm{Lie}(A_2)=\{0\}\), \(\dim(A_1\times A_2)=\dim M\). So \(\alpha\) is a covering map. As \(\red{\alpha}\) is an isomorphism, so is \(\alpha\).

Let us come to 3. The statements are clear at the reduced level. Hence, for transitivity of \(A_1\) on \(A_1A_2/A_2\), we must show that \(\di \pi_e (\textrm{Lie}(A_1))=T_{\pi(e)}(A_1A_2/A_2)\). But \(\di\pi_e (\textrm{Lie}(A_1))=\{0\}\) from the theory of quotient spaces, which also gives \(\di \pi_e (\textrm{Lie}(A_1A_2))=T_{\pi(e)}(A_1A_2/A_2)\). Hence, as \(\textrm{Lie}(A_1)+\textrm{Lie}(A_2)=\textrm{Lie}(A_1A_2)\), we must have  \(\di \pi_e (\textrm{Lie}(A_1))=T_{\pi(e)}(A_1A_2/A_2)\).

To find the stabilizer at \(\pi(e)\) of the action of \(A_1\) on \(A_1A_2/A_2\), let \(N\) be the stabilizer. Clearly \(\red{N}=\red{A}_1\cap \red{A}_2\). Since the kernel of \(\di \pi_e\) is \(\textrm{Lie}(A_2)\), the kernel of \(\di \pi'_e\) (where \(\pi'=\restr{\pi}{N}\)) is precisely the space of those vectors in \(T_e (A_2)\) which are tangent to \(A_1\) at \(e\), namely \(T_e (A_1)\cap T_e (A_2)\simeq \textrm{Lie}(A_1)\cap\textrm{Lie}(A_2) \). So \(N=A_1\cap A_2\).

\end{proof}

If $X$ is a complex supermanifold, let us denote with
$X^\R$ its underlying real supermanifold (see \cite{dm, cfk1}).

\begin{lemma}  
\label{lemma2}
Let $G$ be a connected complex matrix Lie supergroup, $\Lie(G)=\fg$.
Let $G_r$ be a connected real form of $G$, $\Lie(G_r)=\fg_r$, 
so that \(\fg \) is the complexification of \(\fg_r\).
Let $R$ be a closed subsupergroup of $G_r$, $\Lie(R)=\fr$. \\
Let $\fq$ be a complex Lie subsuperalgebra of $\fg$ such that
\begin{itemize}
\item $\fg_r+\fq^\R=\fg^\R$;
\item $\fg_r \cap \fq^\R=\fr$.
\end{itemize}
where \(\fq^\R\) and \(\fg^\R\) are the complex Lie superalgebras 
\(\fq\) and \(\fg\) viewed as real  Lie superalgebras.
Assume that the analytic subSLG $Q$ defined by $\fq$ in $G$ is closed.
Let $R_1$ be the SLG with reduced group $\red{Q} \cap \red{G_r}$ and
Lie superalgebra $\fr$. 
\begin{enumerate}
\item $G_r Q^\R$ is an open subsupermanifold of $G^\R$.
Hence, $G_rQ:= (\widetilde{G}\widetilde{Q}, \cO_G|_{\widetilde{G}\widetilde{Q}})$ 
is an open subsupermanifold of the complex supergroup $G$. 
If \(\pi\) is the natural map \(G\to G/Q\), \(M\coloneqq \pi(G_r Q)\) 
is an open subsupermanifold of the complex supermanifold \(G/Q\). 
We also write \(M=G_r Q /Q\).
\item The natural action of \(G_r\) on \(M^\R\) is transitive;
if its stabilizer at \(\pi(e)\) is \(R_1\), then \(G_r/{R_1}\simeq M^\R\). 
Hence
\(G_r/R_1\) acquires naturally a complex supermanifold structure.
\item If \(R'\) is any closed subSLG of \(G_r\) with \(\textrm{Lie}(R')=
{\fr}\), then \(G_r/R'\) acquires a natural  \(G_r\)-invariant complex (super) structure, compatible with the real analytic one. 
\end{enumerate}
\end{lemma}
\begin{proof}

1. By our hypothesis $G_r$ and $Q^\R$ are subsupergroups of $G^\R$ such 
that $\fg_r+\fq^\R=\fg^\R$. Hence by Lemma \ref{lemma1} we are done.
2. Immediate from 3. of Lemma \ref{lemma1}.
3. Follows from Remark \ref{remarkraja}.

\end{proof}

\subsection{Associated Super Vector Bundles and 
Super Fr\'echet Representations}
\label{frechetrep-sec}

For the relevant material about super vector bundles we refer to \cite{dm}. 
Here we recall that a \emph{super vector bundle}
$\cV$ of rank $p|q$ 
on a supermanifold \(M\)
is a locally free sheaf of rank $p|q$ over \(M\),
that is for each $x \in \red{M}$, there exist $U$ open 
such that $\cV(U) \cong \cO_M(U)^{p|q}:= \cO_M(U) \otimes k^{p|q}$.
$\cV$ is a sheaf
of $\cO_M$ modules and at each $x \in \red{M}$, the stalk $\cV_x$ is
a $\cO_{M,x}$-module. We define the \textit{fiber} of $\cV$ at
the point $x$ as the vector superspace $\cV_x/m_{x}\cV_x$, where
$m_x$ is the maximal ideal of $\cO_{M,x}$.  

\medskip

We  briefly recall  the definition of associated super vector 
bundle in the language of SHCP. 
These are super vector bundles on $G/H$
associated to  finite dimensional
$H$-representations, where $H$ is a closed subSLG of $G$.

\begin{definition}
Let $G$ be a SLG, $H$ a closed subSLG, $\sigma$ a finite
finite-dimensional complex representation of $H$ on $V$, {with
$\sigma=(\widetilde{\sigma}, \rho^\sigma)$ in the language of SHCP's.} 
Consider the sheaf over $\red{G}/\red{H}$ 
\[
\cA(U) \coloneqq \cO_G (\red{p}^{-1}(U)) \otimes V, \qquad U \subset \red{G}/\red{H}
\]
where $p:G\rightarrow G/H$ is the canonical projection. We define the super vector bundle associated with \(\sigma\) as
\begin{align}
\label{eq::assbundleSHCP}
U\mapsto \cA_{SHCP}(U)
\end{align}
where:
\begin{align}
\label{eq::sheafSHCP}
\cA_{SHCP}(U) & := \left\{f\in \cA(U)\, |\, 
 \left\{\begin{array}{ll}
( r_h^\ast\otimes 1) f =(1\otimes {\widetilde{\sigma}(h)^{-1})}(f) & 
\forall h\in \wt{H} \\
(D^L_X \otimes 1) f = (1\otimes {\rho^\sigma}{(-X})) f & \forall X\in \hk_1
 \end{array}
 \right.
 \right\}
 \end{align}

\end{definition}

\medskip

One can prove that the previous definition defines a super vector 
bundle over \(G/H\) with typical fiber \(V\).

The following definition is a natural generalization of the one
given in \cite{CCTV}, and it can be found in \cite{Alldridge}. We refer
to \cite{WarnerSemisimple} for the classical result.

\begin{definition} \label{frechetrep-def}
Let $G$ be a complex or real Lie supergroup.
We say we have a  \textit{representation} of $G$ in 
the complex Fr\'echet vector superspace $F$ if:
\begin{itemize}
\item $\wt{G}$ acts on $F=F_0 \oplus F_1$ and:
\[
\red{\pi} \colon \red{G} \to \rAut(F_0) \times \rAut(F_1)
\]
is an ordinary Frech\'et
representation preserving parity.
\item Denote with  $ C\underline{\mathrm{End}}(\cinfty(\red{\pi}))$  
the algebra of continuous linear endomorphisms of  
the space of smooth vectors of $\cinfty(\red{\pi})$
endowed with the Fr\'echet relative topology inherited from $\cinfty{(\red{G};F)}$.
There is an even linear map
\[
\rho^\pi \colon \gk\to C\underline{\mathrm{End}}(\cinfty(\red{\pi}))
\]
such that
\begin{enumerate}
\item $\restr{\rho^\pi}{\gk_0}=\di\red{\pi}$ 
\item $\rho^\pi([X,Y])=\rho^\pi(X)\rho^\pi(Y)-(-1)^{|X|+|Y|}\rho^\pi(Y)\rho^\pi(X)$
\item $\rho^\pi(\textrm{Ad}(g)X)=\red{\pi}(g)\rho^\pi(X)\red{\pi}(g)^{-1}$
\end{enumerate}
\end{itemize}
\end{definition}

We have: 
$F^\infty=F_0^\infty \oplus F_1^\infty$.

It is not difficult to prove the following proposition.

\begin{proposition} \label{action-on-bundle-prop}
Let $G$  and $H$ be as above.
Let $\sigma=({\widetilde{\sigma}, \rho^\sigma)}$ 
be an $H$-representation in the language of SHCP. If $U \subset \red{G/H}$ is a
 $G$-invariant open subset.
The assignment:
\begin{enumerate}
\item
$\wt{G} \times \cA^\sigma(U)  \lra \cA^\sigma(U), \qquad  g,f   
\mapsto l_{g^{-1}}^*f$
\item 
$
\fg  \lra \End(\cA^\sigma(U)), \qquad X  \mapsto D^R_{-X} 
$
\end{enumerate}
gives a representation of $G$ on the Fr\'echet superspace 
$\cA^\sigma(U)$, where 
as usual $D^R_{-X}=(1 \otimes X) \mu^*$ and the element $X \in \fg$ 
is interpreted
as a left invariant vector field.

\end{proposition}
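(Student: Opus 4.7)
Here is my approach. The plan is to verify the three conditions of Definition \ref{frechetrep-def}, in order, by reducing each to a standard property of left/right translations and of left/right invariant vector fields on the supergroup $G$.

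First, I would check that both assignments do preserve $\cF(U)$. For the $G_0$-action: since on a supergroup left and right translations commute as morphisms, one has
\[
r_h^* \, l_{g^{-1}}^* f \;=\; l_{g^{-1}}^* \, r_h^* f \;=\; l_{g^{-1}}^*\bigl(\widetilde\sigma(h)^{-1} f\bigr) \;=\; \widetilde\sigma(h)^{-1}\, l_{g^{-1}}^* f,
\]
and an analogous computation with $D^L_X$ (which by left-invariance commutes with $l_{g^{-1}}^*$) handles the second covariance condition. For $D^R_{-X}$: a right-invariant vector field commutes with $r_h^*$, giving the first condition, and the super-commutator $[D^L_Y, D^R_X]_s = 0$ (valid on any Lie supergroup) gives the second, once one tracks the Koszul sign in the tensor product convention.

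Second, I would establish the continuity of the map $G_0 \times \cF(U) \to \cF(U)$. Since $\cF(U)$ sits inside $\cO_G(p_0^{-1}(U))$ as the joint kernel of the continuous operators $r_h^* - \widetilde\sigma(h)^{-1}$ (for $h\in H_0$) and $D^L_X - \rho^\sigma(-X)$ (for $X\in\hk_1$), it is a closed Fr\'echet subspace, and it suffices to show that left translation is continuous on $\cO_G$. This follows from two facts: $l_{g^{-1}}$ maps compacta to compacta, and left translation conjugates any differential operator $D$ into a differential operator of the same order. Hence each seminorm $p_{K,D}(l_{g^{-1}}^* f)$ is bounded by a seminorm of the form $p_{g^{-1}K,D'}(f)$, and joint continuity in $(g,f)$ reduces to the classical case by expanding sections in a local frame of odd coordinates.

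Third, I would check that $D^R_{-X}$ sends $\cF(U)^\infty$ to itself and defines a continuous operator there. The key observation is that right- and left-invariant vector fields commute (in the super sense), so $D^R_{-X}$ commutes with the entire $G_0$-action: $l_{g^{-1}}^* \circ D^R_{-X} = D^R_{-X} \circ l_{g^{-1}}^*$. Therefore the orbit map $g \mapsto \pi_0(g)(D^R_{-X} f)$ equals the application of the continuous operator $D^R_{-X}$ to the smooth map $g \mapsto \pi_0(g) f$, and is hence smooth. Finally I would verify the three algebraic conditions of Definition \ref{frechetrep-def}: computing
\[
\tfrac{d}{dt}\Big|_{t=0} l_{\exp(-tX)}^* f \;=\; -D^R_X f \;=\; D^R_{-X} f
\]
for $X \in \fg_0$ shows $\rho^\pi|_{\fg_0} = d\pi_0$; the super-bracket relation follows from the standard identity $[D^R_X, D^R_Y]_s = -D^R_{[X,Y]}$, which makes $X \mapsto D^R_{-X}$ into a super Lie algebra homomorphism; and differentiating the identity $l_{g^{-1}}\!\circ r_{\exp(tX)}\!\circ l_g = r_{\exp(t\,\Ad(g^{-1})X)}$ yields $l_{g^{-1}}^*\circ D^R_X\circ l_g^* = D^R_{\Ad(g)X}$, the required Ad-compatibility.

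I expect the main obstacle to be the continuity verification of the second step, particularly in the holomorphic category: the Fr\'echet topology on $\cO_G$ is defined via all differential operators (including odd ones), so one must check carefully that left translation is jointly continuous in this topology, rather than merely in the coarser topology of uniform convergence on compacta of the reduced values. Once continuity is in hand, every other piece is a direct consequence of the symmetry between left and right invariance on a Lie supergroup.
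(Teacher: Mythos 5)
Your first two steps are sound and in fact supply details the paper leaves implicit: the paper disposes of the continuity of $(g,f)\mapsto l_{g^{-1}}^*f$ with the phrase ``this is an easy check'', and your seminorm estimate $p_{K,D}(l_{g^{-1}}^*f)\le p_{g^{-1}K,D'}(f)$, with $D'$ the conjugated differential operator, is the right way to make that precise in the topology generated by all (including odd) differential operators. The verification that the covariance conditions defining $\cF(U)$ are preserved is also correct.

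The genuine gap is in your third step. You claim that $D^R_{-X}$ commutes with the whole $G_0$-action because ``right- and left-invariant vector fields commute''. That commutation is true but not the relevant statement: the $G_0$-action is by left translations $l_{g^{-1}}^*$, and the operator that is the differential of this action --- your own computation $\tfrac{d}{dt}\big|_{t=0}l_{\exp(-tX)}^*f=-D^R_Xf$ --- is the negative of the \emph{right}-invariant vector field $(X\otimes 1)\mu^*$. Right-invariant vector fields do not commute with left translations; they satisfy exactly the Ad-twisted relation $l_{g^{-1}}^*\circ D^R_X\circ l_g^*=D^R_{\Ad(g)X}$ that you yourself record as condition 3 two lines later, so for noncentral $X$ your two assertions contradict each other. (The auxiliary identity $l_{g^{-1}}\circ r_{\exp(tX)}\circ l_g=r_{\exp(t\Ad(g^{-1})X)}$ is also false as written: left and right translations commute, so its left-hand side is just $r_{\exp(tX)}$. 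You may have been misled by the statement's literal formula $D^R_{-X}=(1\otimes X)\mu^*$, which is the left-invariant field and does commute with $l_{g^{-1}}^*$ --- but then it would not satisfy $\restr{\rho^\pi}{\fg_0}=\di\pi_0$; Definition \ref{frechetrep-def} and the formula $\ell(X)f=(-X\otimes 1)\mu^*(f)$ of Section \ref{actionU-sec} force the right-invariant reading.) Consequently your argument that $D^R_{-X}$ preserves $\cF(U)^\infty$ does not go through as stated. The paper sidesteps the issue by proving the stronger Lemma immediately following the proposition: \emph{every} vector of $\cF(U)$ is smooth, i.e.\ $\cF(U)=\cF(U)^\infty$, which it reduces to item (ii) of Proposition 4.4.1.7 in Warner; once that is known, stability of the smooth vectors under $\fg$ follows from your step 1 alone. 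Alternatively, your step could be repaired using the Ad-relation together with smoothness of $g\mapsto\Ad(g)X$, but the commutation argument as written is incorrect.
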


We now turn to examine the decomposition of a
super Fr\'echet representations of a supergroup with respect to
the action of an ordinary compact Lie subgroup.

\medskip

Let $H$ be a real Lie supergroup, and $U$ 
an ordinary compact Lie subgroup in $H$,  $T \subset U$ a maximal torus.
Notice that the following treatment applies also to the case $T=U$.

\medskip

Assume $H$ acts on a Fr\'echet superspace $F$ via the representation
$R$ according to Definition
\ref{frechetrep-def}. Notice that the restriction of the 
representation $R$ to $U$ automatically preserves the $\Z_2$--grading
$F=F_0\oplus F_1$.
 Let $\tau$ be a character of an irreducible representation of $U$, that
we can assume unitary. We define the operator:
\begin{align}
\label{eq::isoproj}
P(\tau)=d(\tau) \int_U \tau(k)^{-1}R(k) \di k, \qquad \hbox{with}
\quad \int_U \di k=1
\end{align}
where $d(\tau)$ is the degree of $\tau$ (namely the dimension
of the irreducible representation associated with $\tau$).
We define $F(\tau)$ as the closed subspace of $F$ 
 stable under $U$ and on which $U$ acts 
according to the irreducible representation with character $\tau$. $F(\tau)$ is called the 
{\it isotypic subspace} corresponding to the character $\tau$.

We stress that, in the whole section, $F^\infty=F^\infty_0 \oplus F^\infty_1$ 
denotes the space of smooth vectors for the representation $R$ of $H$. When we want  to consider smooth vectors for the restriction  of $R$ to a subgroup $U$ of $G$ we will add a subscript $F^\infty_U$.
Clearly, one has $F^\infty\subseteq F^\infty_U$. The following
is a standard result, see \cite{hc2} or \cite[Sections 4.4.2 and 4.4.3]{WarnerSemisimple}.

\begin{proposition} \label{frechetfacts}
Let $R$ be a representation of the compact Lie group $U$ on the Frech\'et 
superspace $F=F_0\oplus F_1$. Then:

\begin{enumerate}
\item \label{item::HC1}
the operator $P(\tau)$ defined by (\ref{eq::isoproj}), is an even continuous projection onto the isotypic subspace $F(\tau)=F(\tau)_0\oplus F(\tau)_1$. 
\item \label{item::HC2}
$F(\tau)$ is a closed subsuperspace of $F$ and it consists of the
algebraic sum of the linear subsuperspaces on   which $U$ acts irreducibly
according to the (irreducible) representation with character $\tau$. Furthermore
the $F(\tau)$ are linearly independent.

\item $P(\tau)P(\tau')=0$, if $\tau \neq \tau'$. 

\item $P(\tau)$ commutes with the $U$ action and with any continuous
endomorphism of $F$ commuting with $U$.

\item On the space of smooth vectors we have $\sum_\tau P(\tau)=\restr{\id}{F^\infty}$, that is any $f \in F^\infty$ is expressed
as $\sum_\tau f_\tau$, which is called the {\sl Fourier series} of $f$.
Furthermore, such series converges uniformly.

\item Let $F^0 :=\sum F(\tau)$ (algebraic sum). Then $F^0 \subset F^\infty$ 
and both are dense in $F$. 

\end{enumerate}

\end{proposition}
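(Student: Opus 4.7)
The plan is to reduce every assertion to the classical Peter--Weyl decomposition for continuous representations of a compact Lie group on a Fr\'echet space (as developed in \cite{WarnerSemisimple}, Ch.~4), applied separately to each parity component. Since the action of $U$ preserves the $\Z_2$-grading of $F$, the restrictions of $R$ to $F_0$ and $F_1$ are ordinary Fr\'echet representations of $U$ in the sense of Def. \ref{classicalrep-def}, and the classical theory applies in parallel to each piece.

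For items (1)--(4), the first observation is that because $R(k)$ preserves parity and $\tau(k)^{-1}$ is a scalar, the Bochner-integral operator $P(\tau)$ in \eqref{eq::isoproj} is even; it therefore splits as $P(\tau)=P_0(\tau)\oplus P_1(\tau)$, where each $P_i(\tau)$ is the classical $\tau$-projection on $F_i$. The projection property, the closedness of $F(\tau)=F_0(\tau)\oplus F_1(\tau)$, the linear independence of the family $\{F(\tau)\}$, and the orthogonality relation $P(\tau)P(\tau')=0$ then follow from the classical statements applied componentwise. Commutation of $P(\tau)$ with $U$ and with any continuous $U$-equivariant endomorphism $A$ of $F$ is obtained by pulling $A$ through the Bochner integral, using only continuity and linearity of $A$.

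For items (5) and (6), the essential input is that $F^\infty\subseteq F^\infty_U$: smoothness for the bigger group $H$ implies smoothness for the subgroup $U$. Hence for $f\in F^\infty$ the classical Fourier expansion $f=\sum_\tau P(\tau)f$ with uniform convergence applies, and after decomposing $f=f_0+f_1$ by parity it reassembles into the Fourier series on $F$ because $P(\tau)$ is even. For (6), density of $F^0=\sum F(\tau)$ in $F$ is deduced from density of each $F_i^0$ in $F_i$, which is a standard consequence of classical Peter--Weyl together with density of $F^\infty_U$ in $F$. The inclusion $F^0\subseteq F^\infty$ is immediate in the applications of interest (cf. the lemma following Prop. \ref{action-on-bundle-prop}), where $F=F^\infty$ to begin with; in the general statement it follows by the usual Bochner-integral smoothing once one notes that $P(\tau)f$ is a smooth sum of translates of $f$ against the smooth coefficient $\tau$.

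The only real obstacle is bookkeeping: running the two gradings, super and isotypic, in tandem and ensuring that the evenness of $P(\tau)$ lets every classical assertion lift directly through the direct sum $F=F_0\oplus F_1$. Once this is observed, each of the six items becomes a componentwise transcription of the classical Peter--Weyl theorem for Fr\'echet representations, and nothing genuinely super-geometric is required beyond the initial remark that $R|_U$ respects parity.
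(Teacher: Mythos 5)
Your proposal is correct and follows essentially the same route as the paper: the paper's proof consists precisely of the observation that $R$ and $P(\tau)$ are even, so everything reduces to the classical Peter--Weyl theory for Fr\'echet representations (cited from \cite{WarnerSemisimple}) applied componentwise to $F_0$ and $F_1$. Your write-up merely spells out the componentwise bookkeeping that the paper leaves implicit.
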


When necessary we shall stress the fact that the decomposition of $F$ is under
the $U$-action by writing $F_U^0$ and $F_U(\tau)$, similarly we write
$P_U(\tau)$ for the operator defined in (\ref{eq::isoproj}).



\begin{definition}
We say that a representation $R$ as above is \textit{$U$-finite} if
every $F(\tau)$ is finite dimensional. 
\end{definition}

The following is a standard lemma, that we leave to the reader
as an exercise.

\begin{lemma}  
\label{finiteness-cor}
Let the notation and setting be as above.
\begin{enumerate}
\item Let $\widehat{F}^0=\sum_\tau L_\tau$ 
be a dense subspace in $F$, where
the sum is algebraic, the subspaces $L_\tau$ are all
finite dimensional and $L_\tau \subset F_U(\tau)$. 
Then $L_\tau=F_U(\tau)$ for all $\tau$ and
$\widehat{F}^0=F^0_U \subset F^\infty$. Hence, $F$ is $U$-finite.   
\item Let {$U^\prime$} be a compact subgroup of $U$ and assume that $F$
is {$U^\prime$}-finite. Then $F$ is also $U$-finite and $F^0_U=F^0_{{U^\prime}}$.
\end{enumerate}
\end{lemma}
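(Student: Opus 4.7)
For part~(1), I would exploit orthogonality and continuity of the projections $P_U(\tau)$. Applying $P_U(\tau)$ to the dense algebraic sum $\widetilde{F}^0=\sum_{\tau'}L_{\tau'}$, orthogonality (item~3 of Prop.~\ref{frechetfacts}) together with $L_{\tau'}\subset F_U(\tau')$ makes only the $\tau'=\tau$ summand survive, so $P_U(\tau)(\widetilde{F}^0)=L_\tau$. Continuity of $P_U(\tau)$ and density of $\widetilde{F}^0$ then force $L_\tau$ to be dense in $F_U(\tau)=P_U(\tau)(F)$, and finite-dimensionality makes $L_\tau$ closed; hence $L_\tau=F_U(\tau)$. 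This shows $F$ is $U$-finite, $\widetilde{F}^0=\sum_\tau F_U(\tau)=F^0_U$, and $F^0_U\subset F^\infty$ by item~6.

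For part~(2), my plan is to reduce to part~(1). Set $L_\tau:=P_U(\tau)(F^0_{U'})\subset F_U(\tau)$. Since $P_U(\tau)$ commutes with $U$ (item~4) and $U'\subset U$, it commutes with each $P_{U'}(\tau')$, so $P_U(\tau)(F_{U'}(\tau'))\subset F_U(\tau)\cap F_{U'}(\tau')$, a subspace of the finite-dimensional $F_{U'}(\tau')$. To bound which $\tau'$ appear, a nonzero vector $v\in F_U(\tau)\cap F_{U'}(\tau')$ lies, by item~2, in a finite sum of copies of the finite-dimensional $U$-irreducible $W_\tau$; restricting this to $U'$ forces $\tau'$ to occur in $W_\tau|_{U'}$, and only finitely many such $\tau'$ exist. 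Hence each $L_\tau$ is a finite sum of finite-dimensional pieces and therefore finite-dimensional. Density of $\widetilde{F}^0=\sum_\tau L_\tau$ follows because for $v\in F^0_{U'}\subset F^\infty$ the partial sums of $v=\sum_\tau P_U(\tau)(v)$ (item~5) lie in $\widetilde{F}^0$, so $F^0_{U'}\subset\overline{\widetilde{F}^0}$. Part~(1) then yields $L_\tau=F_U(\tau)$ (so $F$ is $U$-finite) and $F^0_U=\widetilde{F}^0$.

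To conclude $F^0_U=F^0_{U'}$: the finite-dimensional $F_U(\tau)$ is $U'$-stable and hence decomposes into a finite direct sum of $U'$-isotypic pieces, giving $F^0_U\subset F^0_{U'}$; conversely, for $v\in F_{U'}(\tau')$, each term of $v=\sum_\tau P_U(\tau)(v)$ sits in $F_U(\tau)\cap F_{U'}(\tau')$, and since the distinct $F_U(\tau)$ are linearly independent (item~2) these intersections form an internal direct sum inside the finite-dimensional $F_{U'}(\tau')$, so only finitely many are nonzero, whence $v\in F^0_U$. The main obstacle is the finiteness bound on $L_\tau$ in part~(2), which genuinely needs the algebraic-sum description of $F_U(\tau)$ combined with finite-dimensionality of $W_\tau$ to control the $U'$-characters that can occur; once this finite-support property is available, everything else is a routine manipulation of commuting projections and finite-dimensional isotypic decompositions.
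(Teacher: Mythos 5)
Your proof is correct and follows essentially the same route as the paper's: part (1) is identical, and part (2) rests on the same two ingredients, namely the finitely many $U'$-types occurring in the restriction of a finite-dimensional $U$-irreducible (giving the finiteness bound) and the linear independence of the isotypic components combined with the Fourier expansion of smooth vectors (giving $F^0_{U'}\subseteq F^0_U$). The only cosmetic difference is that you package the $U$-finiteness in part (2) as an application of part (1) via $L_\tau=P_U(\tau)(F^0_{U'})$ together with a density check, whereas the paper obtains it directly from the containment $F_U(\tau)\subset\sum_{i} F_{U'}(\sigma_i)$.
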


\section{Representations of the Supergroup}\label{grouprep-sec}

The objective of this section is to construct representations
of a real supergroup $G_r$ which correspond infinitesimally to the highest
weight Harish-Chandra modules. 

\medskip
Let $\fg$ be as in list \eqref{list:SLA} and let 
$\fg_r$ be a real form of $\fg$
(see \cite{parker, cf}).
By the ordinary theory, we know that, since
$\fg_0$ is either semisimple or with a one-dimensional center,  
the simply connected corresponding ordinary Lie group 
$\wt{G}$ is a matrix Lie and algebraic group. 
Then, the SHCP $G=(\wt{G}, \fg)$ (see \cite{ccf} Ch. 11 and \cite{fi2})
can be viewed either as an algebraic or an analytic complex SHCP. 
Hence $G$ is a complex analytic matrix supergroup and 
$G^\R$, the supergroup $G$ viewed as a real supergroup (see
\cite{dm, cfk1}), is also
a real analytic matrix supergroup.
Let $G_r$ be the real analytic supergroup corresponding to
the real subsuperalgebra $\fg_r$ of $\fg^\R$ (the superalgebra $\fg$
viewed as real superalgebra). Also $G_r$ is a matrix real Lie supergroup
and we will refer to $G$ as the complexification of $G_r$ and 
we will refer to $G_r$ as a real form of $G$.

\medskip
Fix $\fh$ and $\fh_r$ CSA of
$\fg$ and $\fg_r$ respectively, $\fh$ the complexification of $\fh_r$.
$K_r=\wt{K_r}$ 
is the maximal compact in $\wt{G_{r}}$, $A_r=\wt{A_r}$
the (ordinary) torus, $A_r \subset G_r$, while $\fk_r$, $\fh_r$
the respective Lie superalgebras. We drop the index $r$ to mean the
complexifications. We assume:
$$
\fh \subset \fk \subset \fg, \qquad \fh_r \subset \fk_r \subset \fg_r.
$$
Hence our CSA $\fh=\fh_0$. 
Let $\Delta$ be the root system corresponding to $(\fg,\fh)$
and fix $P$ a positive system.
Let us define $\fb^\pm$ and $\fn^\pm$ 
the \textit{Borel and nilpotent subsuperalgebras}:
$$
\fg= \fh \oplus \bigoplus_{\al \in \Delta} \fg_\al, \qquad
\fb^\pm:= \fh \oplus \sum_{\al \in \pm P} \fg_\al, \qquad
\fn^\pm:=  \sum_{\al \in \pm P} \fg_\al.
$$
We  call
$B^\pm$ \textit{Borel subsupergroup} and 
$N^\pm$ \textit{unipotent subsupergroup}, their
corresponding analytic Lie supergroups in $G$.
In particular, $B^\pm$and $N^\pm$ are connected and are algebraic
subsupergroups of $G$.

\subsection{Maximal torus and big cell in Lie supergroups
of classical type}
\label{maxtorus-sec}

In this section we want to study the connected
ordinary Lie group $A \subset G$, called a \textit{maximal torus} of $G$,
with associated Lie algebra $\Lie(A)=\fh$ the CSA of $\fg$, and
its relation with the supergroups $N^\pm$. 
In particular we introduce the supermanifold $\Gamma:=N^-AN^+ \subset G$,
called the \textit{big cell},
which plays a key role in what follows.
We observe first that 
the (ordinary) torus $A$ normalizes $N^\pm$, as it happens for the 
ordinary setting.

\begin{proposition} \label{lemma3}
Let $\fm_r$ be a real form of the Lie superalgebra $\fg=\Lie(G)$ 
containing $\fh_r$ the CSA of $\fg_r$. Then:
$\fm_r + \fb^+=\fg$ and in particular $M_r(B^+)^\R$ is open subsupermanifold 
of $G^\R$, 
where $M_r$ is the connected subSLG of $G_r$ determined by $\fm_r$.
\end{proposition}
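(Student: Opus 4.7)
The plan is to reduce the identity $\fm_r+\fb^+=\fg$ to a statement about root spaces and then invoke Lemma \ref{lemma1}. The crucial ingredient is that $\fh_r\subseteq\fk_r$, so each root $\alpha\in\Delta$ takes purely imaginary values on $\fh_r$: for $h\in\fh_r$, $\ad h$ has purely imaginary spectrum because $\fk_r$ is a compact real Lie algebra, and its eigenvalues on the root spaces are precisely the numbers $\alpha(h)$.

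Let $\sigma\colon\fg\to\fg$ denote the antilinear Lie superalgebra involution whose fixed points are $\fm_r$. Since $\fh_r\subseteq\fm_r$, $\sigma$ is the identity on $\fh_r$, so for any $h\in\fh_r$ and $x\in\fg_\alpha$ one computes
\[
[h,\sigma x]=[\sigma h,\sigma x]=\sigma([h,x])=\overline{\alpha(h)}\,\sigma x=-\alpha(h)\,\sigma x.
\]
Because $\fh=\fh_r\oplus i\fh_r$ and bracketing is $\C$-linear in each slot, this calculation extends to all of $\fh$ and shows $\sigma(\fg_\alpha)\subseteq\fg_{-\alpha}$; applying $\sigma$ again gives the reverse inclusion, so $\sigma$ swaps $\fg_\alpha$ and $\fg_{-\alpha}$.

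With this in hand, I would check that each summand of $\fg=\fh\oplus\bigoplus_{\alpha\in\Delta}\fg_\alpha$ lies in $\fm_r+\fb^+$. For the Cartan part, $\fh_r\subseteq\fm_r$ and $i\fh_r\subseteq\fh\subseteq\fb^+$ give $\fh\subseteq\fm_r+\fb^+$. For positive-root spaces, $\fg_\alpha\subseteq\fn^+\subseteq\fb^+$ whenever $\alpha\in P$. For negative-root spaces, given $\alpha\in P$ and $x\in\fg_{-\alpha}$, the decomposition $x=(x+\sigma x)-\sigma x$ displays $x$ as the sum of a $\sigma$-fixed element (hence in $\fm_r$) and $\sigma x\in\fg_\alpha\subseteq\fb^+$. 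This proves $\fm_r+\fb^+=\fg$.

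To finish, I would view $G$ as a real Lie supergroup and apply Lemma \ref{lemma1}(1) with $A_1=M_r$ and $A_2=B^+$: the identity $\Lie(M_r)+\Lie(B^+)=\fm_r+\fb^+=\fg$ of $\R$-vector spaces places us under the hypothesis of the lemma, and the conclusion is exactly that $M_rB^+$ is a full open submanifold of $G$. The main obstacle is the identification $\sigma(\fg_\alpha)=\fg_{-\alpha}$; once one recognizes that roots are purely imaginary on $\fh_r$ because $\fh_r\subseteq\fk_r$, everything else is bookkeeping.
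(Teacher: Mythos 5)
Your proof is correct and follows essentially the same route as the paper: both use the antilinear involution fixing $\fm_r$, deduce $\sigma(\fg_\alpha)=\fg_{-\alpha}$ from the roots being imaginary on $\fh_r\subseteq\fk_r$, and decompose a negative-root vector as $(x+\sigma x)-\sigma x$ (the paper's two displayed identities for $X_{-\alpha}$ and $iX_{-\alpha}$ are exactly the real and imaginary instances of your single decomposition). You merely spell out two steps the paper leaves implicit, namely the verification that $\sigma$ swaps $\fg_\alpha$ and $\fg_{-\alpha}$ and the appeal to Lemma \ref{lemma1} for the openness of $M_rB^+$.
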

\begin{proof}
Since $\fm_r$ is a real form of $\fg$, we have $\fg=\fm_r\oplus i\fm_r$. This is equivalent 
to say that there exists an antilinear involution $\widetilde{\,\,}\colon \fg\to \fg$ whose set of fixed points is $\fm_r$.
 Moreover, since $\fh_r$ is contained in $\fk_r$ we have that all the roots are imaginary when restricted to $\fh_r$. These facts  imply that $\fg_\alpha\widetilde{}=\fg_{-\alpha}$.

In order to prove our statement it is enough to show that 
$X_{-\al}$ and $iX_{-\al}$ belong to $\fm_r+\fn^+$.
We have that:
\begin{align*}
X_{-\al}&=X_\al\widetilde{}=(X_{\al}+{X_\al}\widetilde{}~)-
X_\al \in \fm_r+\fn^+ \\ 
iX_{-\al}&=-iX_\al\widetilde{}=(-iX_{\al}\widetilde{}+iX_\al)-iX_\al
 \in \fm_r+\fn^+
\end{align*}
Hence, by Lemma \ref{lemma1}, we obtain our result.
\end{proof}

\begin{proposition} \label{lemma4}
Let the notation be as above. 
Then we have that: 
\begin{enumerate}
\item $\red{\Gamma}:=\red{N^-AN^+}$ is open in $\red{G}$. \\
\item $\red{A}$, $\red{N^\pm}$ are closed and
$\red{N^\pm} \cap \red{A}  =\{1\}$. \\ 
\item The morphism
$$
N^- \times A \times N^+ \lra G, \quad (n^-,h,n^+) \mapsto n^-hn^+,
\quad n \in N^\pm(T), \quad h \in A(T), \, T \in \smflds_\C
$$
is an analytic isomorphism onto its image $N^-AN^+$ which is
an open subsupermanifold of $G$. 
\end{enumerate}
\end{proposition}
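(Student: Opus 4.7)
The plan is to deduce parts (1) and (3) from two successive applications of Lemma \ref{lemma1}, using part (2) together with the root space decomposition $\fg = \fn^- \oplus \fh \oplus \fn^+$ as input.

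I first dispose of part (2). On the reduced side, $\red{A}$ is a maximal torus of the connected complex Lie group $\red{G}$ and hence closed, while $\red{N^\pm}$ are the unipotent subgroups obtained by exponentiating the nilpotent subalgebras $\fn^\pm_0 \subset \fg_0$; these are classically closed subgroups of the reductive Lie group $\red{G}$ (we note that $\fg_0$ is reductive for every superalgebra in the list \eqref{list:SLA}). The intersection $\red{N^\pm} \cap \red{A}$ is trivial because in any faithful matrix realization its elements would simultaneously be unipotent and semisimple. At the Lie algebra level, $\fn^\pm \cap \fh = \{0\}$ is immediate from the directness of the root space decomposition, and I recall that the standing assumption $\fh \subseteq \fk$ forces $\fh = \fh_0$ to be purely even.

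For parts (1) and (3), the first application of Lemma \ref{lemma1} is inside the Borel subsupergroup $B^+$, whose Lie superalgebra is $\fh \oplus \fn^+$, to the pair $(A, N^+)$: the sum condition $\Lie(A) + \Lie(N^+) = \Lie(B^+)$ is obvious, $\red{A} \cap \red{N^+} = \{1\}$ follows from part (2), and $\Lie(A)_1 \cap \Lie(N^+)_1 = 0$ holds trivially since $\fh_1 = 0$. Lemma \ref{lemma1}(2) then yields that $A \times N^+ \to AN^+$ is an analytic superdiffeomorphism onto an open full subsupermanifold of $B^+$. The second application is inside $G$ itself, to the pair $(N^-, B^+)$: the sum $\Lie(N^-) + \Lie(B^+) = \fg$ is exactly the triangular decomposition; $\Lie(N^-)_1 \cap \Lie(B^+)_1 = \fn^-_1 \cap \fn^+_1 = 0$ since $\fn^\pm_1$ are sums of root spaces for disjoint sets of roots; and $\red{N^-} \cap \red{B^+} = \{1\}$ is the classical opposite-Borel statement for the reductive Lie group $\red{G}$. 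Lemma \ref{lemma1}(2) now gives that $N^- \times B^+ \to N^- B^+$ is an analytic superdiffeomorphism onto an open full subsupermanifold of $G$. Composing,
\[
N^- \times A \times N^+ \;\xrightarrow{\,\id \times \text{mult}\,}\; N^- \times (AN^+) \;\hookrightarrow\; N^- \times B^+ \;\xrightarrow{\,\text{mult}\,}\; N^- B^+ \subseteq G,
\]
yields an analytic superdiffeomorphism onto the open full subsupermanifold $N^- A N^+$ of $G$. This proves (3), and (1) follows by passing to reduced spaces.

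The step requiring most care is the classical input $\red{N^-} \cap \red{B^+} = \{1\}$, which is not a supergeometric fact but a statement about the reduced reductive group $\red{G}$. The cleanest route is to invoke $\red{B^-} \cap \red{B^+} = \red{A}$ (standard for opposite Borels in a connected reductive complex Lie group) and then use $\red{N^-} \subseteq \red{B^-}$ together with $\red{N^-} \cap \red{A} = \{1\}$ from part (2). Everything else is formal bookkeeping with the triangular decomposition and the two invocations of Lemma \ref{lemma1}.
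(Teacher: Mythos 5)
Your proposal is correct and follows essentially the same route as the paper: the paper's proof of (3) likewise applies Lemma \ref{lemma1} twice, first to $A\times N^+\to AN^+$ and then to $N^-\times AN^+\to G$, while treating (1) and (2) as classical facts. You merely make explicit the hypothesis checks (the trivial intersections at the reduced and odd-Lie-algebra levels, and $\red{N^-}\cap\red{B^+}=\{1\}$ via opposite Borels) that the paper leaves as "a simple check."
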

 
\begin{proof} 
(1) and (2) are statements of ordinary geometry. (3)
Consider the morphism $\phi:A \times N^+ \lra AN^+ \subset G$. 
$AN^+$ is a Lie supergroup,
since $N^+$ is normalized by $A$  and $\Lie(AN^+)=\fh+\fn^+$. Hence
$\phi$ is a diffeomorphism onto its image (apply Lemma \ref{lemma1}).  
We now apply again Lemma \ref{lemma1} to the  map $\psi:N^-\times AN^+ \to G$.
$\psi$ is a diffeomorphism onto its image, which is an open subsupermanifold of $G$. 
\end{proof}

\begin{remark} 
The image of
the multiplication morphism $A \times N^+ \lra AN^+ \subset G$ 
is a Lie supergroup.
Since its reduced space is $\red{B^+}=\red{AN^+}$ and
its Lie superalgebra $\fb^+= \fh \oplus \fn^-$, we have that $B^+=AN^+$
(and similarly $B^-=AN^-$), where $B^+$ is the unique connected subsupergroup of
$G$ with Lie superalgebra $\fb^+$.
\end{remark}

\begin{definition}
Let the notation be as above. 
We define the \textit{big cell} in $G$ as 
the open subsupermanifold of $G$:
$$
\Gamma:=N^-A N^+ \subset G
$$
Its underlying topological space
$\red{\Gamma}=\red{N^-AN^+}$ is open and dense in $G$.
\end{definition}

\begin{proposition} \label{lemma5}
Let the notation be as above.
Then we have that:
\begin{enumerate}
\item  $G_r(B^{\pm})^\R$ are open real subsupermanifolds in $G^\R$; 
$G_rB^{\pm}=(\wt{G_r}\wt{B^\pm}, \cO_G|_{\wt{G_r}\wt{B^\pm}})$ are 
open complex subsupermanifolds in $G$;\\
\item  $G_r/A_r \cong G_rB^\pm/B^\pm$ acquires
a {$\wt{G_r}$ invariant complex
structure.} \\
\item  $N^-$ is a section for $\Gamma\to \Gamma/B^+$, 
the left action of $A$ reads:
$$
A \times \Gamma/B^+ \lra \Gamma/B^+, \, \,
(h, nB^+(T)) \mapsto hnh^{-1}B^+(T), \,\, n \in N^\pm(T), \, 
h \in A(T), \, T \in \smflds_\C
$$
\end{enumerate}
\end{proposition}
\begin{proof}
(1) Due to Lemma~\ref{lemma1}, $\fg_r+\fb^\pm=\fg$, hence the map 
$\alpha\colon G_r\times B^\pm\to G$ is a subsupermersion and
the subsupermanifolds $G_r(B^\pm)^\R$ are open in $G^\R$.

(2) If we prove that $\fg_r\cap\fb^\pm=\fh_r$ and $\red{G_r}\cap \red{B^+}=\red{A_r}$ we can apply Lemma~\ref{lemma2} and conclude. Let us hence proceed to prove these facts.
We have 
\[
\fg_r\cap \fb^\pm=\fh_r\,.
\]
Indeed, let $X\mapsto X\widetilde{\,}$ be the conjugation of $\fg$ associated with $\fg_r$ as described in the proof of Prop~\ref{lemma3}. Consider the case of $\fb^+$ for definiteness. $X\in\fb^+$ can be written as 
$X=\sum c_i H_i +\sum_{\alpha \in \Delta^+} d_\alpha X_\alpha$ with 
$H_i\in\fh_r$, $X_\alpha\in\fg_\alpha$, and $c_i,d_\alpha\in \C$. Then
$X\widetilde{\,}=\sum_i \overline{c}_i H_i +
\sum_{\alpha \in \Delta^+} \overline{d}_\alpha X_{\alpha}\widetilde{\,}$.
Since $\fg_\alpha\tilde{\,}=\fg_{-\alpha}$ (see the proof of Prop~\ref{lemma3}), we have that $X\widetilde{\,}=X$ if and only if $X\in\fh_r$.\\ Since by assumption  the SLG $G$ is simply connected, there exists an antiautomorphism $\sigma\colon G \to G$ such that $(\di\sigma)_e(X)=X\widetilde{\,}$. 
By the ordinary theory we have that 
$\red{G_r}\cap \red{B^+}=\red{A_r}$. Indeed, let 
  $a=hn^+ \in \red{G_r}\cap \red{B^+}$, with $h \in \red{A_r}$ and
$n^+ \in \red{B^+}$.
Hence $hn^+=\red{\sigma}(h)n^-$ (where $n^{-}\coloneqq \red{\sigma}(n^+)$), that is $\red{\sigma}(h)^{-1}hn^+=n^-$, so that
$h=\red{\sigma}(h)$, $n^+=n^-=1$, so $a \in \red{A_r}$.

(3) Since the big cell $\Gamma\subset G$ is right $B^+$-invariant and open, 
and the canonical projection $p\colon G\to G/B^+$ is a submersion,  
we can define the open subsupermanifold of $G/B^+$:
\[
{\Gamma}/B^+ \coloneqq (\red{\Gamma/B^+}, \restr{\cO_{G/B^+}}
{\red{\Gamma/B^+}})
\]
We have a $N^-$ equivariant diffeomorphism $N^- \lra \Gamma/B^+$,
$n^- \mapsto n^-B^+(T)$, $n^- \in N^-(T)$,  $T \in \smflds_\C$. 
In fact, by the ordinary
theory we have a diffeomorphisms of the underlying differentiable manifolds
and the differential at the identity is an isomorphism: $\fn^- \cong \fg/\fb^+$.
\end{proof}

\subsection{Line bundles on $G/B^+$} \label{linebu-sec}

Let us consider a character $\chi_r$ of the classical real maximal torus $A_r$
inside the real supergroup $G_r$. This character uniquely extends to an
holomorphic character of $A$ and has the form 
\begin{align*}
\chi:  A & \lra  \C^\times \\ 
 exp(H) & \mapsto  e^{\lambda(H)}
\end{align*}
for an integral weight $\lambda \in \fh^*$ (i.e. a weight
such that $\lambda(H_\gamma) \in \bZ$ for all roots $\gamma$).

We can 
trivially extend the character $\chi$ of $A$ to a
character of the Borel subsupergroup $B^+$, since we know
$B^+=AN^+$. 
We denote with {$(\chi_0, \lambda)$} the corresponding representation in
the SHCP formalism.

\medskip
The character $\chi=e^\lambda$ defines according to \ref{frechetrep-sec}
an {holomorphic} 
line bundle on $\red{G/B^+}$ that we denote with $L^\chi$ or 
$L_\lambda$ depending on the convenience.
If $p:{G} \lra {G/B^+}$ we have:
\beq
\label{def::Lchi}
L^\chi(U) =L_\lambda(U)=\left\{ 
f\in \cO_G(p^{-1}(U))\, |\,  
\begin{cases}
 r_b^\ast f =\chi_0(b)^{-1} (f) 
& \forall b\in \red{B^+}\\
  D^L_X  f = \lambda{(-X}) f &  \forall X\in \fb^+
 \end{cases}
\right\} 
\eeq

\noindent We can equivalently write:
$$
L^\chi(U)=L_\lambda(U)=\left\{ f:  p^{-1}(U)\rightarrow 
{\C}^{1 \mid  1} \, 
|\right.   \,  f_T(gb)=\chi_T(b)^{-1} f_T(g),
\left. \, b\in B^+ (T)\,,\, g\in U(T)\right\}
$$

\medskip
We now turn our attention to the  Frech\'et superspace
$F:=L^\chi(\red{\Gamma/B^+})$, where $\Gamma=N^-AN^+$ is the big cell
in the complex supergroup $G$. $\Gamma$ is neither stable under
$G$-action nor under the $G_r$-action, 
however, as any neighbourhood of the identity, it is stable
under the action of $\cU(\fg)$ and we want
to study such representation.

\begin{proposition} \label{gamma-prop}
The restriction of the holomorphic line 
bundle $L^\chi$ to  $\red{\Gamma/B^+}=\red{N^- A N^+/B^+}$ is trivial:
\[
L^\chi(\red{\Gamma/B^+}) \simeq \cO_{G/B^+}(\red{\Gamma/B^+})
\]
In particular, there is a canonical identification between:
\begin{align}
\label{eq::topoident}
F=L^\chi(\red{\Gamma/B^+}) \simeq  \cO(N^-)
\end{align}
between the sections on the big cell of the line bundle $L^\chi$
and the holomorphic functions on $N^-$.
\end{proposition}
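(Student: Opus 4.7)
My plan is to use the product decomposition of the big cell, together with the character property of $\chi$, to give the trivialization directly. The engine is Prop.~\ref{lemma4}(3), which produces an analytic superdiffeomorphism
\[
\gamma\colon N^-\times B^+ \lra \Gamma,
\]
so that every $T$--point $g\in\Gamma(T)$ factors uniquely as $g=nb$ with $n\in N^-(T)$ and $b\in B^+(T)$. From this, a $B^+$--covariant section on $\Gamma$ should be recoverable from its restriction to $N^-$, multiplied back by $\chi^{-1}$ on the $B^+$ factor.

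Concretely, I would work in the functor--of--points picture of $L^\chi$ from (\ref{eq::sheafFOP}), where a section is a morphism $f\colon\Gamma\to\C^{1|1}$ satisfying $f_T(gb')=\chi_T(b')^{-1}f_T(g)$. Restriction along the inclusion $j\colon N^-\hookrightarrow \Gamma$ (the section of $p$ identified in Prop.~\ref{lemma5}(3)) gives the map
\[
\Phi\colon L^\chi(\red{\Gamma/B^+})\lra Hol(N^-),\qquad f\mapsto f\circ j,
\]
while, in the opposite direction, for $\phi\in Hol(N^-)$ I would set
\[
\Psi(\phi)\coloneqq (\phi\otimes\chi^{-1})\circ\gamma^{-1},\qquad \text{i.e. } \Psi(\phi)_T(nb)=\chi_T(b)^{-1}\phi_T(n).
\]
That $\Psi(\phi)$ is right $B^+$--covariant is immediate from the character property of $\chi$, and the identities $\Phi\Psi=\id$, $\Psi\Phi=\id$ follow from uniqueness of the factorization $g=nb$. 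Running the same construction over any open $U\subseteq \red{\Gamma/B^+}$ yields the sheaf isomorphism $L^\chi|_{\red{\Gamma/B^+}}\simeq \cO_{G/B}|_{\red{\Gamma/B^+}}$ and the identification (\ref{eq::topoident}).

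The only delicate bookkeeping concerns translating between the three equivalent descriptions of $L^\chi$ via Prop.~\ref{prop::equivbundle}. The group--theoretic right $B^+_0$--invariance of $\Psi(\phi)$ is visible, and the infinitesimal odd condition $D^L_X f=\lambda(-X)f$ for $X\in\fn^+_1\subset\fb^+_1$ follows by a direct computation from
\[
\Psi(\phi)_T(nb\exp(tX))=\chi_T(\exp(tX))^{-1}\chi_T(b)^{-1}\phi_T(n)
\]
upon differentiating in $t$; since $\chi$ is a character with differential $\lambda$, the outcome is exactly $\lambda(-X)\Psi(\phi)$. I expect this routine cross--picture translation, rather than the core bijection, to be the main (mild) obstacle in writing the proof cleanly.
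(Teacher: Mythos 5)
Your proposal is correct and follows essentially the same route as the paper: the paper's proof also invokes the section $s\colon \Gamma/B^+\to\Gamma$ from Prop.~\ref{lemma5}(3) and the splitting $\kappa\colon N^-\times B^+\to\Gamma$, then defines a pair of mutually inverse maps $\eta\colon f\mapsto f\circ i$ (your $\Phi$) and $\zeta\colon G\mapsto (G\times\chi)\circ\kappa^{-1}$ (your $\Psi$). The covariance and inverse checks you outline are exactly what the paper leaves implicit.
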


\begin{proof} 
In order to prove the triviality
of the line bundle $L^\chi$ over 
$\red{\Gamma/B^+}$, we have to construct a section
\(
s\colon {\Gamma}/B^+ \to \Gamma
\).
This is the content of Prop. \ref{lemma5}, (3). 
The  isomorphism  \eqref{eq::topoident} is easily established 
using the correspondence between  sections of the associated bundle \(L^\chi\) and the \(B^+\)-equivariant mappings \(N^- B^+ \to \C^{1|1}\), as in the classical setting (see, for example, \cite{mi}). More precisely, let
\[
\kappa\colon N^-\times B^+ \stackrel{\simeq}{\to} N^- B^+
\]
be the isomorphism established in Prop.~\ref{lemma4}.
We have maps $\eta \colon L^\chi(\red{\Gamma/B^+})\to \cO(N^-)$ 
and $\zeta\colon \cO(N^-)\to L^\chi(\red{\Gamma/B^+})$ given by
\begin{align*}
\eta\colon L^\chi(\red{\Gamma/B^+}) & \to \cO(N^-) & \zeta\colon  \cO(N^-) & \to L^\chi(\red{\Gamma/B^+})\\
f & \to G_f\coloneqq \eta(f) & G & \to f_G\coloneqq \zeta{(G)}
\end{align*}
where $G_f$ and $f_G$ are the morphisms defined as follows
\[
G_f\colon N^- \stackrel{i}{\to} N^-\times B^+ \stackrel{\kappa}{\to} 
N^-B^+\stackrel{f}{\to} \C^{1|1}
\]
and
\[
f_G\colon N^-B^+ \stackrel{\kappa^{-1}}{\to} N^-\times B^+ 
\stackrel{G\times \chi}{\longrightarrow} \C^{1|1}\times 
\C^\times 
\to \C^{1|1}
\]
$\eta$ and $\zeta$ gives the desired isomorphism, 
we leave to the reader the standard checks involved. 
\end{proof}

\begin{remark} 
Since the Frech\'et topology on \(L^\chi\) is defined through local trivializations, by the previous Proposition we have that 
the identification $L^\chi(\red{\Gamma/B^+})\simeq \cO(N^-)$  is also 
an isomorphism of Frech\'et superspaces, an in the ordinary case.
\end{remark}

To ease the notation we shall also write $L^\chi({\Gamma})$
in place of $L^\chi(\red{\Gamma/B^+})$.

\medskip

Let $t_\al$ denote the global homogeneous
exponential coordinates on $N^-$  (see \cite{koszul,gw} for
the definition of exponential). By a classical result, if 
$\red{N}$ is a connected nilpotent Lie 
group, $\cU(\fn_0)$ preserves the 
ordinary polynomials $\cP(\red{N})$ on $\red{N}$.
Let $\cP=\cP(\red{N}^-) \otimes \wedge(\fn_1^-)$.
We thus have the natural identifications:
\[
\cP=\cP(\red{N}^-) \otimes \wedge(\fn_1^-)=
\uHom_{\fn_0}(\cU(\fn^-),\cP(\red{N}^-))
\subset  \uHom_{\fn_0}(\cU(\fn^-),C^\infty(\red{N}^-))
\]
Notice that  $\cP$ are the polynomials in the
indeterminates $t_\al$.

We now want to study in detail the action of $A_r$, the ordinary
torus in $G_r$, on the polynomials $\cP$ in $\cO(N^-)$ 
and the corresponding
superalgebra $\cov{\cP}$ in $F=L^\chi({\Gamma})$.

\medskip

Proposition \ref{gamma-prop} allows us to
obtain immediately the following corollary 
(we shall also see it later as a consequence of 
Lemma~\ref{spectrum-lemma} in the next section).

\begin{proposition} 
$\cP$ is dense in $\cO(N^-)$ and $\cov{\cP}$ is dense in $F$.
\end{proposition}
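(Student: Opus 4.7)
\emph{Plan.} By Proposition~\ref{gamma-prop} the identification $\eta\colon F=L^\chi(\Gamma)\xrightarrow{\sim} Hol(N^-)$ is an isomorphism of Fr\'echet superspaces that carries $\cov{\cP}$ onto $\cP$, so it suffices to prove that $\cP$ is dense in $Hol(N^-)$.

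First I would exploit the global exponential coordinates on $N^-$ (see Sec.~\ref{exp-sec}). These split the structural sheaf globally as
\[
Hol(N^-)\simeq Hol(\red{N^-})\otimes \wedge(\fn_1^-),
\]
so any $f\in Hol(N^-)$ has a unique expansion $f=\sum_I f_I\theta^I$ with $f_I\in Hol(\red{N^-})$. Since $N^-_0$ is a simply connected nilpotent complex Lie group, the reduced exponential is a biholomorphism $\fn_0^-\xrightarrow{\sim}\red{N^-}$, and $Hol(\red{N^-})$ may be identified with the Fr\'echet algebra of entire functions in the even coordinates $t_\al$.

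Next I would unpack the seminorms $q_{D,K}$ defining the topology on $Hol(N^-)$. An arbitrary holomorphic differential operator $D$ on $N^-$ can be written as a finite sum
\[
D=\sum_J D_J\,\partial^J_\theta,
\]
where each $D_J$ is a holomorphic differential operator in the even coordinates and $\partial^J_\theta$ is a product of the odd derivations $\partial/\partial\theta_j$. Applied to $f=\sum_I f_I\theta^I$, the reduction $\widetilde{Df}$ becomes a finite linear combination of the $D_J f_I$. It therefore suffices to prove that each entire function $f_I$ can be approximated by polynomials, uniformly on compact sets, jointly with all of its holomorphic derivatives.

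The last step is a classical fact: every $f_I\in Hol(\C^m)$ has a Taylor expansion $\sum_\alpha c_{I,\alpha} t^\alpha$ that converges absolutely and uniformly on every compact subset, and by Cauchy's integral formula the same remains true termwise after applying any holomorphic differential operator. Hence the partial sums $p_{I,k}\in \cP(\red{N^-})$ approximate $f_I$ in every seminorm, and the corresponding finite sums $\sum_I p_{I,k}\theta^I\in \cP(N^-)$ approximate $f$ in every seminorm $q_{D,K}$. Transferring along $\eta^{-1}$ yields density of $\cov{\cP}$ in $F$. The only subtle point is the bookkeeping between the decomposition of $D$ and the approximation of the individual components, but this is immediate once the finite sum above is in place.
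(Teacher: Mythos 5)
Your argument is correct and follows essentially the same route as the paper's: reduce via the Fr\'echet isomorphism of Proposition~\ref{gamma-prop} to density of $\cP$ in $Hol(N^-)$, use the global splitting $Hol(N^-)\simeq Hol(\red{N^-})\otimes\wedge(\fn_1^-)$ coming from the exponential coordinates, expand $f=\sum_I f_I\theta^I$, and reduce each seminorm estimate to the classical density of polynomials in entire functions on $\C^m$. The only difference is cosmetic: the paper works with a basis of seminorms $q_{K,D}$ whose operators $D$ involve only the odd derivations (sufficient in the holomorphic setting, since even derivatives are controlled on compacta by Cauchy estimates), whereas you allow general $D$ and invoke Cauchy's formula explicitly to handle the even part.
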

\begin{proof}
{In view of the definition of the topology on $F$, 
it is enough to prove that $\cP$ is dense in $\cO(N^-)$. 
The proof goes as in the ordinary setting, since
$N^-$ is analytically isomorphic to $\C^{m|n}$ via 
the exponential morphism, for
suitable $m|n$.}
\end{proof}

\subsection{The action of the maximal torus on the 
polynomials on the big cell} \label{actiontorus-sec}

In this section we introduce two natural actions $c$ and $l$ of 
the ordinary Lie group $A_r$ on the big cell $\Gamma=N^- A N^+$, 
together with the actions $i$ and $\ell$ they induce on
the Frech\'et superspace $L^\chi(\Gamma)$. We notice for future reference that both actions coincide on the quotient \(\Gamma/B^+\).

\medskip

In the definition of the two actions we use the 
isomorphism (see Prop.~\ref{lemma4})
\[
\kappa\colon N^-\times B^+ \stackrel{\simeq}{\to} \Gamma
\]

Let us start with the action $c$ related to the coniugation.
Since $A_r$ acts on $N^-$ by conjugation (see Prop.~\ref{lemma5}),  
we have a global action of $A_r$ on $\Gamma$ defined as: 
\begin{align}
\label{eq::actionAr1}
c\colon A_r \times \Gamma  \stackrel{1_{A_r}\times \kappa^{-1}}
 {\to}  A_r\times (N^-\times B^+)  \stackrel{\textrm{conj}\times 1_{B^+}}
 {\to}  N^-\times B^+ \stackrel{\kappa}  {\to}  \Gamma
\end{align}
which in the functor of points notation reads 
\[
a\cdot (n^- b^+)=(a n^- a^{-1}) b^+,\quad
a \in \wt{A_r}, \, n^- \in N^-(T), b^+ \in B^+(T).
\]
Since $A_r$ also acts on $B^+$ by left translation $l^\prime$, 
we can define the left action of $A_r$ on $\Gamma$ as
\begin{align*}
l_a & = \kappa\circ(\textrm{conj}_a\times l^\prime_a)\circ\kappa^{-1}
\end{align*}
or, in the functor of points notation,
\begin{align}
\label{eq::actionAr2}
a\cdot (n^- b^+)=(a n^- a^{-1})a\cdot b^+.
\end{align}

Both actions commute with right translations by $B^+$ and hence 
define  representations of $A_r$ on $L^\chi(\Gamma)$
\begin{align*}
i,\, \ell \colon A_r \times L^\chi(\Gamma) & \to L^\chi(\Gamma) 
\end{align*}
where \(
i_a(f)  =c_{a^{-1}}^\ast (f)\) and \(
\ell_a(f)  = l_{a^{-1}}^\ast(f)
\)
for all $a\in \red{A_r}$, and all $f\in L^\chi (\Gamma)$.

These representations are most easily written in 
the functor of points notation as
\begin{align*}
i_a (f) (n^- b^+) & = f((a^{-1}n^- a)b^+)\\
\ell_a (f) (n^- b^+ ) & = f((a^{-1}n^- a) a^{-1}b^+)
\end{align*}
The above formulas further simplify using the identification 
\[
L^\chi(\Gamma)\simeq \cO(N^{-})\,,
\]
we leave the details to the reader.

\begin{lemma} \label{torus-actions}
Let the notation be as above. Then
\begin{enumerate}
\item $\ell_a f=\chi(a) (i_af)$ 
\item $i_at_\al=\chi_\al(a)t_\al \qquad \forall a \in \wt{A_r}$
\end{enumerate}
\end{lemma}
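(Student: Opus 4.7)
The proof is a direct unpacking of definitions, best carried out in the functor-of-points picture (or equivalently in the $\Lambda$-points picture from Section~\ref{exp-sec}), where odd and even coordinates can be manipulated uniformly.

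For part (1), the plan is to compute
\[
\ell_a(f)(n^- b^+) \;=\; f\bigl(l_{a^{-1}}(n^- b^+)\bigr) \;=\; f\bigl((a^{-1} n^- a)(a^{-1} b^+)\bigr).
\]
Since $a^{-1} \in A_r \subseteq B^+$, the factor $a^{-1} b^+$ lies in $B^+$, so the defining covariance $f(g b) = \chi(b)^{-1} f(g)$ of sections of $L^\chi$ applies and rewrites this as $\chi(a^{-1} b^+)^{-1}\, f(a^{-1} n^- a)$. Multiplicativity of $\chi$ then splits this as $\chi(a)\cdot \chi(b^+)^{-1}\, f(a^{-1} n^- a)$, whose second factor is exactly $i_a(f)(n^- b^+)$ by the analogous computation for $i_a$. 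This gives $\ell_a f = \chi(a)\, i_a f$.

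For part (2), I would use the exponential coordinates of Section~\ref{exp-sec}: every $\Lambda$-point of $N^-$ can be written as $\exp\!\bigl(\sum_{\beta\in P} s_\beta X_{-\beta}\bigr)$ where $\{X_{-\beta}\}$ is a root basis of $\fn^-$ and $s_\beta = t_\beta$. Conjugation by $a^{-1}$ acts as $\mathrm{Ad}(a^{-1})$ on the Lie-algebra argument of $\exp$, and since $X_{-\beta}$ is a weight-$(-\beta)$ vector, $\mathrm{Ad}(a^{-1}) X_{-\beta} = \chi_\beta(a)\, X_{-\beta}$. Reading off the $\al$-component then yields $t_\al(a^{-1} n^- a) = \chi_\al(a)\, t_\al(n^-)$, i.e.\ $i_a t_\al = \chi_\al(a)\, t_\al$.

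The main obstacle is purely bookkeeping: applying the $L^\chi$-covariance correctly in (1), and matching the paper's indexing of root vectors $X_{-\beta}$ with the characters $\chi_\beta$ in (2). The super-setting introduces no additional difficulty here, since the $\Lambda_0$-manifold formalism recalled in Section~\ref{exp-sec} ensures that even and odd exponential coordinates behave identically under these manipulations.
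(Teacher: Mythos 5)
Your proposal is correct and follows essentially the same route as the paper: part (1) is the same unwinding of the definition $\ell_a f = l_{a^{-1}}^\ast f$ together with the $B^+$-covariance of sections of $L^\chi$ and multiplicativity of $\chi$, and part (2) is the same computation in exponential coordinates using that $X_{-\beta}$ is a weight vector, so that $\mathrm{Ad}(a^{-1})X_{-\beta}=\chi_\beta(a)X_{-\beta}$ (the paper's displayed formula writes $\mathrm{Ad}(a)$ but clearly intends $\mathrm{Ad}(a^{-1})$, as its final coefficient $\chi_\beta(a)$ shows). No gaps.
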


\begin{proof}
(1) follows  immediately from (\ref{eq::actionAr2}). For (2) let
\(
n=\exp(\sum_{\beta\in P} y_\beta X_{-\beta})
\) 
in $N^-(T)$, then the result 
comes from the following formal calculation:
\begin{align*}
t_\al(a^{-1}na)&=t_\al\big(\exp(\sum_{\beta\in P} y_\beta Ad(a)X_{-\beta}) \big)=
t_\alpha(\exp(\sum_{\beta\in P} y_\beta \chi_\beta(a)X_{-\beta}) \\
&=\chi_\al(a)t_\al(n), 
\end{align*}
where $a \in \wt{A_r}$, $y_\beta \in \C$ {and the $t_\al$ are the polynomial
coordinates on $N^-$ (see Sec. \ref{linebu-sec}).}
\end{proof}

To ease the notation we shall also write $a \cdot f$ in place of
$\ell_a(f)$.

\begin{proposition}  
Let $\cP$ be the polynomial superalgebra generated by the $t_\al$ in
$\cO(N^-)$ and let $\cov{\cP}$ be the corresponding submodule in $F$. 
$A_r$ acts on $\cov{\cP}$ and we have that:
$$
a \cdot \cov{(t^{r_{\al_1}}_{\al_1} \dots t^{r_{\al_s}}_{\al_s})} = 
\chi_{\lambda+\sum r_{\al_i}\al_i}(a) \,  
\cov{(t^{r_{\al_1}}_{\al_1} \dots t^{r_{\al_s}}_{\al_s})}
$$
Hence $\cov{\cP}$ decomposes into the sum of eigenspaces 
$\cov{\cP}_d$ 
for the action of $A_r$, where $d$ ranges in $D^+$
the semigroup in $\fh^*$ generated by the positive roots:
$$
\cov{\cP}=\oplus_{d \in D^+}\cov{\cP}_d, \qquad 
\cov{\cP}_d=\oplus_{\sum r_{\al_i}\al_i=d} \, \C \cdot 
\cov{(t^{r_{\al_1}}_{\al_1} \dots t^{r_{\al_s}}_{\al_s})} 
$$
A similar decomposition holds also for $\cP$.
\end{proposition}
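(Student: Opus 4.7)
The plan is to combine Lemma~\ref{torus-actions} with the algebra structure on $Hol(N^-)$ and to read the result through the identification $F\simeq Hol(N^-)$ of Proposition~\ref{gamma-prop}. Under this identification $\widetilde{\cP}\subset F$ corresponds to $\cP\subset Hol(N^-)$, so once we control the action on polynomial monomials in the $t_\alpha$ on $N^-$, the statement will follow by transport.

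First I would observe that the two actions $i$ and $\ell$ of $A_r$ on $F\simeq Hol(N^-)$ introduced in Section~\ref{actiontorus-sec} admit the following concrete description: via the conjugation isomorphism $\mathrm{conj}_a$ of $N^-$, $i_a$ is simply pull-back of holomorphic sections along the automorphism $n\mapsto a^{-1}na$, while $\ell_a$ differs from $i_a$ by the scalar $\chi(a)$ by Lemma~\ref{torus-actions}(1). In particular $i_a$ is an \emph{even algebra automorphism} of $Hol(N^-)$. This is the crucial fact: because $i_a$ is multiplicative and sends each generator $t_\alpha$ to a scalar multiple of itself (by Lemma~\ref{torus-actions}(2)), it automatically preserves the polynomial subalgebra $\cP$, and acts diagonally on monomials:
\[
i_a\bigl(t_{\alpha_1}^{r_{\alpha_1}}\cdots t_{\alpha_s}^{r_{\alpha_s}}\bigr)
= \chi_{\alpha_1}(a)^{r_{\alpha_1}}\cdots\chi_{\alpha_s}(a)^{r_{\alpha_s}}
\, t_{\alpha_1}^{r_{\alpha_1}}\cdots t_{\alpha_s}^{r_{\alpha_s}}
= \chi_{\sum r_{\alpha_i}\alpha_i}(a)\, t_{\alpha_1}^{r_{\alpha_1}}\cdots t_{\alpha_s}^{r_{\alpha_s}} .
\]
Multiplying by $\chi(a)=\chi_\lambda(a)$ as in Lemma~\ref{torus-actions}(1) and transporting back to $\widetilde{\cP}\subset F$ via Proposition~\ref{gamma-prop} yields exactly the asserted formula
\[
\ell_a \bigl(\widetilde{t_{\alpha_1}^{r_{\alpha_1}}\cdots t_{\alpha_s}^{r_{\alpha_s}}}\bigr)
= \chi_{\lambda+\sum r_{\alpha_i}\alpha_i}(a)\, \widetilde{t_{\alpha_1}^{r_{\alpha_1}}\cdots t_{\alpha_s}^{r_{\alpha_s}}} .
\]
In particular $\widetilde{\cP}$ (and $\cP$) is stable under $A_r$.

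The eigenspace decomposition is then immediate. Grouping monomials by their total weight $d=\sum r_{\alpha_i}\alpha_i$ gives finite-dimensional subspaces $\widetilde{\cP}_d$ on which $A_r$ acts by the character $\chi_{\lambda+d}$; since monomials with distinct multi-indices $(r_{\alpha_i})$ that produce the same $d$ remain linearly independent in $Hol(N^-)$, their span is exactly $\widetilde{\cP}_d$, and direct summation over the set of such $d$'s recovers all of $\widetilde{\cP}$. Because each $t_\alpha$ is a coordinate associated to the root vector $X_{-\alpha}$ with $\alpha\in P$, only sums of \emph{positive} roots appear, so $d$ ranges precisely over the semigroup $D^+\subset \fh^\ast$ generated by $P$. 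The analogous statement for $\cP$ itself is obtained by stripping the $\chi_\lambda$ twist, i.e.\ replacing $\ell_a$ by $i_a$ throughout.

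The argument is almost entirely bookkeeping; the only point where care is needed is in checking that the identification $F\simeq Hol(N^-)$ from Proposition~\ref{gamma-prop} intertwines $\ell$ on $F$ with $\chi(a)\cdot i_a$ on $Hol(N^-)$. This is where the factor $\chi(a)$ appears and is exactly the content of Lemma~\ref{torus-actions}(1); granting that, everything reduces to the elementary diagonal action of the torus on polynomial generators by their root weights, which is the only substantive, but essentially formal, computation.
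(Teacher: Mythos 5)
Your proposal is correct and follows essentially the same route as the paper: the paper likewise combines parts (1) and (2) of Lemma \ref{torus-actions}, computing $a\cdot\widetilde{t^r_\al}(n)=t^r_\al(a^{-1}na)\chi_\lambda(a)=\chi_{\lambda+r\al}(a)\,t^r_\al(n)$ for a single monomial and noting the general case is identical, which is exactly your multiplicativity-of-$i_a$ observation made explicit. The grouping into eigenspaces $\cov{\cP}_d$ is treated as immediate in both arguments.
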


\begin{proof} This is a simple calculation, similar to the one
in Lemma \ref{torus-actions}.
\end{proof}

\begin{corollary}
\label{cor::dimspectrum}
The maximal torus $A_r$ acts on the  Frech\'et superspace $L^\chi(\Gamma)$
and we have: 
\begin{enumerate} 
\item $F(\tau) \neq 0$ if and only if $\tau=\chi_{-\lambda+d}$ for
some $d=\sum_{m_\al \in \bZ_{\geq 0}, \al \in P} m_\al \al$.
\item
\[
F(\chi_{\lambda+d})=\cov{\cP}_{\lambda+d}
\]
and 
\[
\dim(F(\chi_{\lambda+d}))=
\# \left\{r=(r_\alpha) \, | \, \sum_{r_\al \in \bZ_{\geq 0}, \al \in P} r_\al \al=d\right\}
\]
\end{enumerate}
\end{corollary}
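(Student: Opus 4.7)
The plan is to derive this corollary directly from the two preceding propositions, together with Lemma~\ref{finiteness-cor}(1). The preceding proposition already gives the explicit $A_r$-weight decomposition
$$
\cov{\cP} \;=\; \bigoplus_{d\in D^+}\cov{\cP}_d, \qquad \cov{\cP}_d \subset F(\chi_{\lambda+d}),
$$
and the density statement proved just before it tells us that $\cov{\cP}$ is dense in $F=L^\chi(\Gamma)$. Hence the only hypothesis I still need to check in order to invoke Lemma~\ref{finiteness-cor}(1) with $L_{\chi_{\lambda+d}}:=\cov{\cP}_d$ (and $L_\tau:=0$ for every other character $\tau$ of $A_r$) is that each $\cov{\cP}_d$ is finite-dimensional.

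Verifying this finite-dimensionality is the combinatorial core of the argument. The space $\cov{\cP}_d$ is spanned by those monomials $\cov{t_{\al_1}^{r_{\al_1}}\cdots t_{\al_s}^{r_{\al_s}}}$ for which $\sum_i r_{\al_i}\al_i = d$, subject to the restriction $r_\al\in\{0,1\}$ whenever $\al$ is odd, since then $t_\al$ is an odd coordinate and therefore squares to zero. Because the positive roots $P$ all lie in a common open half-space of $\fh_r^*$, the standard Kostant-partition-function argument shows that only finitely many nonnegative integer tuples $(r_\al)_{\al\in P}$ satisfy $\sum_{\al\in P} r_\al\al = d$; the super constraint on odd exponents merely shrinks this set further. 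Hence $\dim\cov{\cP}_d$ is finite and equals exactly the count asserted in~(2).

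With finite-dimensionality in hand, Lemma~\ref{finiteness-cor}(1) immediately yields $F(\chi_{\lambda+d})=\cov{\cP}_d$ for every $d\in D^+$ and $F(\tau)=0$ for every character $\tau$ of $A_r$ not of this form, which is precisely the content of~(1) together with the first assertion of~(2); the dimension formula is then a restatement of the Kostant-style count carried out above. The only genuine obstacle is the combinatorial finiteness step, and it is a mild one: since odd positive roots enter the monomials with exponent at most one, the super case reduces without difficulty to the classical Kostant statement for the even positive roots.
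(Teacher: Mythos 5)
Your proof follows the same route as the paper, which simply cites Lemma~\ref{finiteness-cor} and calls the dimension count straightforward; you have merely made explicit the finite-dimensionality check (the Kostant-partition argument, with odd exponents at most one) that the paper leaves to the reader. The argument is correct as written.
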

\begin{proof}
(1) and (2) are consequences of  Lemma~\ref{finiteness-cor}. 
The computation of the dimension is straightforward.
\end{proof}

We now want to prove the fact that the spectrum of $A_r$ 
remains unchanged when
we change the open set we are considering in a suitable way. We shall first
prove a general lemma. 

\medskip

Let $T$ be an ordinary compact torus acting on a finite dimensional 
complex vector superspace $V$. 
By a classical result we have  
the action of $T$ on $V$ is via characters $\tau_i$'s and, 
with a suitable choice of a basis of \(V\), reads as follows
\begin{align*}
T \times V & \lra  V \\ 
t, (v_1, \dots, v_m, \nu_1, \dots, \nu_n) & \mapsto  (\tau_1(t)v_1, \dots
\tau_m(t)v_m, \tau_{m+1}(t)\nu_1, \dots , \tau_{m+n}(t)\nu_n)
\end{align*}
We can easily transport this action to the space of polynomial functions
$\Pol(V)$ on $V$ and obtain the following action:
$$
t \cdot \sum a_{IJ} z^I\xi^J = \sum a_{IJ} \tau^I (t)^{-1}
\tau^J (t)^{-1}z^I \xi^J
$$ 
using the multiindex notation $I=(i_1 \dots, i_n)$ with possibly repeated
indices, $J=(j_1, \dots , j_n)$ with no repeated indices.

$T$ has also a natural action on
the holomorphic sections of the structural sheaf on $V$, $\cO_V(U)$,
where $U$ is a $T$ invariant open set in $V$. 
If $g \in \cO_V(U)$, we know we can view such $g$ as 
a morphism $g: U \lra \C^{1|1}$.
If $\rho_t(u):=t \cdot u$ is the action of $T$ on $U$, we define:
$$
t \cdot g = g \circ \rho_{t^{-1}}
$$
Notice that such action agrees with
the previously defined action on the polynomials.

\medskip

We define $\Pol(\tau)$ the space of polynomials transforming according
to the character $\tau$, that is:
$$
\Pol(\tau)=\{p\in \Pol(V) \, \big| \, t \cdot p=\tau(t) p\}
$$   
We define also $\Pol(U)=\Pol(V)|_U$, for any open $U \subset V$.

\begin{lemma} \label{spectrum-lemma}
Let $T$ be an ordinary compact torus acting on 
 a finite dimensional complex vector superspace $V$. 
For any character $\tau$ of $T$,
we assume that $\dim(\mathrm{Pol}(\tau))<\infty$. Then any
open connected subset $U$ of $V$ which is $T$-invariant and contains the origin,
is such that $\mathrm{Pol}(U)$ is dense in $\cO_V(U)$.
\end{lemma}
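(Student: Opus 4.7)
The plan is to use the $T$--action on the Fréchet superspace $\cO_V(U)$ to Fourier--decompose a given $f\in\cO_V(U)$ under $T$, and then argue that each isotypic summand is automatically a polynomial thanks to the finite--dimensionality hypothesis.

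First I would check that the formula $(t\cdot g)(u)=g(t^{-1}u)$ defines a Fréchet representation of $T$ on $\cO_V(U)$ in the sense of Definition~\ref{frechetrep-def}; continuity in $t$ is inherited from the continuity of the linear $T$--action on $V$, and every vector is smooth by the same Proposition~4.4.1.7 of \cite{WarnerSemisimple} that was invoked in Section~\ref{frechetrep-sec}. Proposition~\ref{frechetfacts}(5) then yields, for any $f\in\cO_V(U)$, the Fourier expansion
\[
f=\sum_\tau P(\tau)f,
\]
converging in the Fréchet topology, with each $P(\tau)f$ lying in the $\tau$--isotypic subspace of $\cO_V(U)$.

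The crux of the proof is the inclusion of the $\tau$--isotypic component into $\Pol(\tau)$. Let $g\in\cO_V(U)$ satisfy $t\cdot g=\tau(t)g$. Since $0\in U$ and $U$ is open, there is a polydisc $\Delta\subset U$ around the origin on which $g$ admits a convergent super Taylor expansion
\[
g=\sum_d g_d,
\]
where $g_d$ is the homogeneous super--polynomial piece of total degree $d$ in the even coordinates (the odd coordinates being finitely many and nilpotent contribute only a finite--dimensional factor, which is absorbed into the coefficients). Because the $T$--action on $V$ is linear it preserves this grading, so $t\cdot g_d=\tau(t)g_d$ and hence $g_d\in\Pol(\tau)$ for every $d$. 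Pieces of distinct total degree are linearly independent, so the hypothesis $\dim\Pol(\tau)<\infty$ forces $g_d=0$ for all but finitely many $d$. Thus $g$ agrees on $\Delta$ with a genuine polynomial $p\in\Pol(\tau)$, and connectedness of $U$ together with the analytic continuation principle extends the identity $g=p$ to all of $U$.

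Combining the two steps, each Fourier component $P(\tau)f$ lies in $\Pol(\tau)\subset\Pol(U)$, so $f$ is a limit in the Fréchet topology of $\cO_V(U)$ of polynomial functions, which is exactly the asserted density. The main technical obstacle is the middle step: one must handle the super Taylor expansion carefully, use linearity of the $T$--action to preserve the degree grading, and then exploit the finite--dimensional hypothesis on $\Pol(\tau)$ to cut off the series; everything else is formal and rests on the Fréchet representation machinery already established.
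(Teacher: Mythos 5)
Your proposal is correct and follows essentially the same route as the paper: Fourier decomposition of $\cO_V(U)$ under the compact torus $T$, followed by a power-series expansion of each $T$-eigenfunction around the origin, with linearity of the action preserving the grading and the hypothesis $\dim\Pol(\tau)<\infty$ truncating the series. The only cosmetic difference is that you group the Taylor expansion by total homogeneous degree while the paper works monomial by monomial (keeping only those $u^r$ with $f^r=f$); both arguments are otherwise identical.
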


\begin{proof} 
We may assume that $V = \C^{m|n}$ with $T$-action
$$
t, (z_1, . . . , \xi_{m+n}) \mapsto 
(f_1(t)z_1, . . . , f_{m+n}(t)\xi_{m+n}) 
$$
where the $f_j$ are characters of
$T$. 
Let $U$ be an open connected subset of $\C^{m|n}$ containing the origin and
stable under $T$. The action of $T$ induces an action on $\cO_V(U)$. 
It is enough to prove that the closure of $\Pol(\C^{m|n})$ 
contains $\cO_V(U)^\infty$ the smooth vectors in $\cO_V(U)$ with respect to
the $T$ action, since we know such space is dense in $\cO_V(U)$. 
Since the Fourier series of any $g$ in $\cO_V(U)$ converges to $g$
(see Prop. \ref{frechetfacts} (5) and (6)), it
is enough to show that any eigenfunction of $T$ in $\cO_V(U)$ is a polynomial.
Suppose $g \neq 0$ is in $\cO_V(U)$ 
such that $t^{-1}\cdot g = f(t)g$ for all $t \in T$ and
$u \in U$, $f$ being a character of $T$. 
Since $0 \in U$ we can expand $g$ as a power
series $g(u) = \sum c_ru^r$  in a polydisk, where $u$ comprehends even and
odd coordinates and we are using the multiindex notation. Notice that
the action of $T$ preserves the polidisks, and we have
\[
(t^{-1}.g)(u)=g(tu)=\sum_r c_r (t u)^r=\sum_r c_r f^r(t) u^r=f(t)g=
\sum_r c_r f(t) u^r
\]
Then $c_rf^r = c_rf$ whenever $c_r \neq 0$, because $t^{-1}\cdot g = f(t)g$.
So only the $r$
with $f = f^r$ appear in the expansion of $g$. We claim that there are only
finitely many such $r$; once this claim is proven we are done, because $g$
is a linear combination of the monomials $u^r$ with $f^r = f$, hence $g$ is a
polynomial. To prove the claim, note that all such $u^r$ are eigenfunctions
for $T$ for the eigencharacter $f$, and by assumption, there are only finitely
many of these.
\end{proof}

We want to apply the previous lemma in a case that is of interest to us.

\medskip

Define now $\Gamma_1=G_rB^+/B^+$ and $\Gamma_2=(\Gamma \cap G_r B^+)^0/B^+$
{(the suffix ``$0$'' denotes the connected component of the identity).}
These are open sets in $G/B^+$ which are invariant under the $A_r$ action.
Let us denote with $\cov{\cP}(\Gamma_2)$, the set $\cov{(\cP|_{\Gamma_2'})}$
where $\Gamma_2' \cong \Gamma_2$ in the isomorphism of analytic supermanifolds
$N^- \cong \Gamma/B^+$.

\begin{corollary}  \label{spectrum-cor}
Let the notation be as above.
\begin{enumerate}
\item $\overline{\cP}=\cO(N^-)$, 
where the nilpotent supergroup $N^-$
is interpreted as a vector superspace via the identification $\fn^- \cong N^-$
via the exponential morphism.
\item $\overline{\cov{\cP}}=L^\chi(\Gamma)$, $\Gamma=N^-AN^+$ the
big cell in $G$.
\item 
$\overline{\cov{\cP}}(\Gamma_2)=L^\chi(\Gamma_2)$.
\end{enumerate}
\end{corollary}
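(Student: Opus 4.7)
The plan is to derive all three density assertions as direct applications of Lemma~\ref{spectrum-lemma}, with the compact torus $T = A_r$ acting on the vector superspace $V = \fn^-$ through the adjoint action and transported to $N^-$ via the global exponential diffeomorphism of Section~\ref{exp-sec}. The hypothesis of Lemma~\ref{spectrum-lemma} that each polynomial isotypic subspace $\Pol(\tau)$ is finite dimensional is already known from Corollary~\ref{cor::dimspectrum}(2), which computes this dimension as the number of ways to write the corresponding weight as a nonnegative integer combination of positive roots.

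For part (1), I would just note that $N^- \cong \fn^- \cong \C^{m|n}$ globally as supermanifolds via $\exp$, that under this identification $\cP$ is exactly $\Pol(\fn^-)$, and that the set $U = \fn^-$ is trivially a connected, $A_r$-invariant open subset containing the origin; Lemma~\ref{spectrum-lemma} then yields $\overline{\cP} = \cO(N^-)$. Part (2) follows formally by applying the isomorphism of Fr\'echet superspaces $L^\chi(\Gamma) \simeq \mathrm{Hol}(N^-)$ furnished by Proposition~\ref{gamma-prop}, under which $\cov{\cP}$ corresponds tautologically to $\cP$.

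For part (3), the main task is to realize $\Gamma_2 = (\Gamma \cap G_r B^+)^0 / B^+$ as an open subset $U$ of $N^- \cong \fn^-$ satisfying the hypotheses of Lemma~\ref{spectrum-lemma}. I would compose the section $s\colon \Gamma/B^+ \to \Gamma$ of Proposition~\ref{lemma5}(3) with $\exp^{-1}$ to view $\Gamma_2$ inside $\fn^-$, and verify:
\begin{enumerate}
\item $U$ is open, since $\Gamma$ and $G_r B^+$ are open full submanifolds of $G$;
\item $U$ is connected, by the very definition of $\Gamma_2$ as the identity component;
\item $U$ contains the origin, because $1 \in \Gamma \cap G_r B^+$ and $\exp^{-1}(1) = 0$;
\item $U$ is $A_r$-invariant, since $A_r$ normalizes $N^\pm$ and preserves $A$ (so left translation by $A_r$ preserves $\Gamma = N^- A N^+$), left translation by $A_r \subset G_r$ preserves $G_r B^+$, and connectedness of $A_r$ guarantees that the identity component of the intersection is also preserved.
\end{enumerate}
Lemma~\ref{spectrum-lemma} then gives density of $\cP|_U$ in $\mathrm{Hol}(U)$.

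The main obstacle, more of a bookkeeping point than a genuine difficulty, is transferring this density statement on $\mathrm{Hol}(U)$ back to a density statement on $L^\chi(\Gamma_2)$. This works because the trivialization of Proposition~\ref{gamma-prop} is a topological isomorphism of Fr\'echet supersheaves, so restricting the section $s$ to $\Gamma_2$ yields an isomorphism $L^\chi(\Gamma_2) \simeq \mathrm{Hol}(U)$ that carries $\cov{\cP}|_{\Gamma_2}$ to $\cP|_U$. Since the isomorphism is a homeomorphism, closures correspond, and the desired equality $\overline{\cov{\cP}}(\Gamma_2) = L^\chi(\Gamma_2)$ follows.
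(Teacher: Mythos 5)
Your proposal is correct and follows essentially the same route as the paper: all three parts are reduced to Lemma~\ref{spectrum-lemma} applied to the $A_r$-action on $N^-\cong\fn^-$ (with finite-dimensionality of the isotypic polynomial spaces taken from Corollary~\ref{cor::dimspectrum}), and part (2) is transferred via the topological isomorphism $L^\chi(\Gamma)\simeq \mathrm{Hol}(N^-)$ of Proposition~\ref{gamma-prop}. Your write-up is in fact more detailed than the paper's, which leaves the verification of the hypotheses of the lemma for $\Gamma_2$ (openness, connectedness, containing the origin, $A_r$-invariance) implicit.
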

\begin{proof}
(1) We apply Lemma~\ref{spectrum-lemma}. The torus $A_r$ acts on $N^-$ through 
the action $c$ given by (\ref{eq::actionAr1}).
 The condition $\dim\Pol(\tau)<\infty$ is 
checked with a calculation completely similar to that 
of Corollary~\ref{cor::dimspectrum}.
(2) is a consequence of the  isomorphism (\ref{eq::topoident}).
(3) follows again from Lemma~\ref{spectrum-lemma}.
\end{proof}

Define now $F=L^\chi(\Gamma)$, $F^1=L^\chi(\Gamma_1)$,
$F^2=L^\chi(\Gamma_2)$. Notice that on $F$ and $F^2$ we do not have any $G$ or
$G_r$ action, only $F^1$ is a $G_r$ module in a natural way.

\begin{corollary}\label{spectrum-cor2}
Let the notation be as above.
\begin{enumerate}
\item The restriction morphism $F^1 \lra F^2$ is a continuous injection.
\item Under the restriction, $F^1(\tau) \subset F^2(\tau)$ for
characters $\tau$ of $A_r$.
\item $F^2(\tau)=F(\tau)|_{\Gamma_2}$.
\end{enumerate}
\end{corollary}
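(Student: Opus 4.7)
The plan is to derive (1) from continuity of Fr\'echet-sheaf restriction together with an identity principle for super-holomorphic line bundle sections, (2) from the $A_r$-equivariance of restriction, and (3) by combining the density statement Corollary~\ref{spectrum-cor}(3) with the finiteness criterion Lemma~\ref{finiteness-cor}(1).

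For (1), continuity is automatic: Proposition~\ref{frechetbundle-prop} and Lemma~\ref{lemma::frechet} define the Fr\'echet topology on $L^\chi$ as the initial topology with respect to restrictions to open subsets, so $L^\chi(\Gamma_1)\to L^\chi(\Gamma_2)$ is continuous. For injectivity I would argue as follows. The subset $\Gamma_2$ is open in $\Gamma_1$ and contains the identity coset, and $L^\chi$ is a holomorphic line bundle on $G/B^+$; picking local trivializations and expanding sections into odd monomials with holomorphic coefficients, the classical identity principle (applied to the reduced function and to each odd coefficient) forces any section vanishing on $\Gamma_2$ to vanish on the connected component containing $\Gamma_2$. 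Since $\Gamma_1=G_rB^+/B^+$ is the $G_r$-orbit of the identity coset, and is therefore connected whenever $G_r$ is, this yields the required injectivity; in the disconnected case one translates $\Gamma_2$ by coset representatives of $G_r/G_{r,0}$ to reach each remaining component.

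For (2), left multiplication by $A_r$ preserves $G_rB^+$ (because $A_r\subset G_r$) and preserves $\Gamma/B^+$ by Proposition~\ref{lemma5}(3) (because $A_r\subset A\subset B^+$); hence it preserves their intersection, and $A_r$ being connected it preserves the identity component $\Gamma_2$. The restriction $F^1\to F^2$ is therefore $A_r$-equivariant, so it sends $F^1(\tau)$ into $F^2(\tau)$, and its restriction to $F^1(\tau)$ is injective by (1).

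For (3), Corollary~\ref{spectrum-cor}(3) asserts that the restriction $\cov{\cP}|_{\Gamma_2}$ of the polynomial superalgebra $\cov{\cP}\subset F$ is dense in $F^2$. Under the identification $\Gamma/B^+\simeq N^-$ of Proposition~\ref{gamma-prop}, together with its analogue over the open piece $\Gamma_2$, restriction of polynomials is injective and preserves the $A_r$-weight grading described in Corollary~\ref{cor::dimspectrum}; one thus exhibits a dense $A_r$-graded subspace $\sum_\tau\cov{\cP}(\Gamma_2)_\tau$ of $F^2$, each isotypic summand finite dimensional and contained in $F^2(\tau)$. Lemma~\ref{finiteness-cor}(1) then forces $F^2(\tau)=\cov{\cP}(\Gamma_2)_\tau$, which under the polynomial isomorphism equals $\cov{\cP}_\tau|_{\Gamma_2}=F(\tau)|_{\Gamma_2}$. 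The main anticipated obstacle is the clean invocation of the super-holomorphic identity principle in (1) and the check that the $G_r$-translates of $\Gamma_2$ exhaust $\Gamma_1$; once these are in place, (2) and (3) are formal consequences of the density and finiteness statements already established.
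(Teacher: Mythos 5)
Your proof is correct and follows essentially the same route as the paper's: (1) by the (super) analytic continuation principle on the open inclusion $\Gamma_2\subset\Gamma_1$ together with continuity of the Fr\'echet restriction maps, (2) by $A_r$-equivariance of restriction, and (3) by combining the density of the restricted polynomials from Corollary~\ref{spectrum-cor} with finite-dimensionality of the isotypic components (the paper argues this directly via continuity of $P(\tau)$ rather than citing Lemma~\ref{finiteness-cor}, but it is the same argument). The only caveat is your aside on the disconnected case: translating $\Gamma_2$ by coset representatives does not by itself propagate the vanishing of a fixed section to the other components, but this is moot here since $G$ is simply connected, so $\red{G_r}$ and hence $\Gamma_1$ are connected.
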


\begin{proof}
$(1)$ and $(2)$ are clear, $(1)$ because $\Gamma_2$ is open in $\Gamma_1$  
and of the  analytic
continuation principle, which holds also in the supersetting,
while $(2)$ is a simple check. Now we go to $(3)$. The space of polynomials
$\Pol(\Gamma)$ on $\Gamma$ is dense in $F$ and by 
Corollary~\ref{cor::dimspectrum} we have 
$F^0=\Pol(\Gamma)$.  $\Pol(\Gamma_2)=\Pol(\Gamma)|_{\Gamma_2}$
is dense in $F^2$ by the previous corollary. Since $\Pol(\Gamma_2)$ is dense
in $F^2$, 
we have that  the restriction of $F(\tau)$ to $\Gamma_2$ is 
dense in $F^2(\tau)$ and since $F(\tau)$
is finite dimensional we have $F(\tau)|_{\Gamma_2}= F^2(\tau)$.
\end{proof}

\subsection{The action of $\cU(\fg)$ on $L^\chi(\Gamma)$}
\label{actionU-sec} 

We start by defining the natural action of $\cU(\fg)$ on the
holomorphic functions on any neighbourhood $W$ of the identity
of the supergroup $G$.

\begin{definition}
Let $W \subset G$ be an open neighbourhood of the identity $1_G$ in $G$.
There are two well defined actions of $\fg$, hence 
of $\cU(\fg)$, on $\cO(W)$ that read as follows:
$$
\ell(X)f=(-X \otimes 1)\mu^*(f) , \qquad X \in \fg
$$
$$
\partial(X)f=(1 \otimes X)\mu^*(f)
$$
\end{definition}

\begin{proposition}
Let 
$U$ be open in $\red{G/B^+}$.
Then  $\ell$ and $\partial$ are well defined actions
on $\cO(U)$ and they commute with each other.
\end{proposition}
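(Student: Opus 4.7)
The plan is to verify each of the three required properties in turn, treating $\ell(X)=-D^R_X$ and $\partial(X)=D^L_X$ as (minus) right-invariant and left-invariant vector fields on $G$ restricted to the open subsupermanifold $p^{-1}(U)\subset G$.

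\emph{Well-definedness of $\ell$ on $L^\chi(U)$.} The flow of $D^R_X$ is left translation on $G$, which commutes with right translations; infinitesimally this gives $r_h^\ast D^R_X = D^R_X\, r_h^\ast$ for all $h\in B_0$, and more generally the identity $[D^R_X,D^L_Y]=0$ for all $X,Y\in\fg$. Applying the first identity to $f\in L^\chi(U)$ shows $r_h^\ast(\ell(X)f) = \chi_0(h)^{-1}\,\ell(X)f$; applying the second, with $Y\in\fb_1$, shows $D^L_Y(\ell(X)f) = \ell(X)(\lambda(-Y)f) = \lambda(-Y)\,\ell(X)f$. Both equivariance conditions defining $L^\chi(U)$ therefore pass through $\ell(X)$, and the Lie superalgebra homomorphism property $[\ell(X),\ell(Y)]=\ell([X,Y])$ follows from the standard identity $[D^R_X,D^R_Y]=-D^R_{[X,Y]}$ (right-invariant vector fields realize $\fg^{\mathrm{op}}$).

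\emph{Well-definedness of $\partial$ on $L^\chi(U)$.} The operator $\partial(X)=D^L_X$ is a derivation of $\cO_G(p^{-1}(U))$ satisfying $[\partial(X),\partial(Y)]=\partial([X,Y])$ via $[D^L_X,D^L_Y]=D^L_{[X,Y]}$. The delicate point is preservation of the right-$B$-equivariance: here I would use the commutation $r_h^\ast D^L_X = D^L_{\Ad(h^{-1})X}\, r_h^\ast$ combined with the fact that $\chi$, being a character of $B$ with values in the abelian group $\C^\times$, is $\Ad(B)$-invariant ($\chi\circ\conj_h = \chi$), whose infinitesimal version reads $\lambda\circ\Ad(h)=\lambda$ on $\fb$. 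This is the main technical bookkeeping of the proof and I expect it to be the principal obstacle.

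\emph{Commutativity.} Once both operators are defined, $[\ell(X),\partial(Y)]=-[D^R_X,D^L_Y]=0$ is immediate from the classical identity already used in the first paragraph, which is the infinitesimal manifestation of the commutativity of left and right translations on $G$. In the Hopf-algebraic language it is a direct consequence of the coassociativity of $\mu^\ast$: applying the derivation $X\otimes 1\otimes Y$ to both sides of $(\mu\otimes\id)\mu^\ast=(\id\otimes\mu)\mu^\ast$ yields the desired identity on sections. The commutativity itself is thus a clean formal consequence; the real content of the proposition is the stability of $L^\chi(U)$ under $\partial$ established in the previous step.
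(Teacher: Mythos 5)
Your handling of $\ell$ and of the commutation $[\ell(X),\partial(Y)]=0$ is correct; since the paper's own proof of this proposition consists of the single word ``Immediate,'' there is no argument of the authors to compare against, and your first and third paragraphs supply exactly the routine verifications that word is standing in for. The genuine problem is the step you yourself flag as the principal obstacle, namely the stability of $L^\chi(U)$ under $\partial$: the argument you sketch cannot be completed, because the statement it aims at is false for general $X\in\fg$. The intertwining relation converts $r_h^\ast(\partial(X)f)$ into $\chi_0(h)^{-1}D^L_{\Ad(h)X}f$ (up to the usual $h$ versus $h^{-1}$ convention), and the $\Ad(B)$-invariance of $\chi$ only controls the component of $\Ad(h)X$ lying in $\fb^+$; it says nothing about the $\fn^-$-component, which for $X\in\fn^-$ is a nontrivial rescaling of $X$. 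Concretely, for $\fg=\fsl_2$ with $F$ the lowering operator and $h\in B_0$ with diagonal part $\mathrm{diag}(a,a^{-1})$ and nonzero upper-triangular part, one finds
\[
\Ad(h)F=a^{-2}F+\beta H+\gamma E,\qquad \beta\neq 0,
\]
so that $r_h^\ast(\partial(F)f)=\chi_0(h)^{-1}\bigl(a^{-2}\,\partial(F)f-\beta\lambda(H)f\bigr)$, which is not $\chi_0(h)^{-1}\partial(F)f$. Already $\partial(F)\cov{1}$ violates the covariance condition whenever $\lambda(H)\neq 0$: it vanishes on $N^-$ but not on $N^-B^+$. No bookkeeping with $\lambda\circ\Ad(h)=\lambda$ on $\fb^+$ will rescue this, since the failure comes precisely from the part of $\Ad(h)X$ that leaves $\fb^+$.

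What is true, and what the paper actually needs in the sequel (the invariance statement for $\cov{\cP}_\lambda$ is proved there only after restriction to $N^-$, and the pairing of Theorem \ref{theorem5} uses only the value $(\partial(u)f)(1_G)$ together with the commutation with $\ell$), is the weaker assertion: $\ell(X)$ and $\partial(X)$ are well defined and supercommute as operators on the ambient space $\cO_G(p^{-1}(U))$; $\ell(X)$ preserves the subspace $L^\chi(U)$; and $\partial(X)$, restricted to $L^\chi(U)$, takes values in $\cO_G(p^{-1}(U))$, acting by the scalar $\lambda(-X)$ when $X\in\fb^+$. Your proposal becomes correct if the second paragraph is replaced by this weaker claim (which is immediate, since $D^L_X$ is a local differential operator); as written, it promises a proof of an assertion that a rank-one computation already refutes.
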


\begin{proof} Immediate. \end{proof} 

The natural action of
$\cU(\fg)$ on $L^\chi(N^-B^+)$ is algebraic,
hence it preserves $\cov{\cP}$. The proof is analogous to the classical one and we leave the details to the reader.

\begin{proposition} \label{lemma4bis}
The action $\ell$ 
of  $\cU(\fg)$ on $L^\chi(U)$, $p^{-1}(U) \subset \Gamma$ 
leaves $\cov{\cP}$ 
invariant.
\end{proposition}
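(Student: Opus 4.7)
The plan is to exploit the canonical isomorphism $L^\chi(\Gamma)\simeq Hol(N^-)$ from Prop.~\ref{gamma-prop}, under which $\cov{\cP}_\lambda$ corresponds to the polynomial subalgebra $\cP(N^-)$ in the global exponential coordinates $t_\alpha$ of Sec.~\ref{exp-sec}. Since $\ell$ and $\partial$ are Lie algebra actions extended multiplicatively to $\cU(\fg)$, invariance of $\cov{\cP}_\lambda$ under $\cU(\fg)$ follows once it is established for $\fg$. I will therefore show that, for each $X\in\fg$, the transported operators $\ell(X)$ and $\partial(X)$ become differential operators on $N^-$ whose coefficients are polynomials in the $t_\alpha$, hence automatically preserve $\cP(N^-)$.

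Using the splitting $\Gamma\simeq N^-\times B^+$ from Prop.~\ref{lemma4}, a section can be realized as $\tilde f(nb)=\chi(b)^{-1}f(n)$ with $f\in Hol(N^-)$. The operator $\partial(X)$ implements infinitesimal right translation, $\partial(X)\tilde f(g)=\tfrac{d}{dt}\bigl|_{t=0}\tilde f(g\exp(tX))$. Restricting to $N^-$, the case $X\in\fb^+$ is immediate: since $n\exp(tX)\in N^-B^+$, the defining property of $L^\chi$ yields $\partial(X)\tilde f|_{N^-}=-\lambda(X)f$, a scalar operator (recall $\lambda$ vanishes on $\fn^+$ and equals the weight on $\fh$). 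For $X\in\fn^-$, $\partial(X)\tilde f|_{N^-}$ is the left-invariant vector field $D^L_X$ on the connected nilpotent Lie supergroup $N^-$; by the standard (even-rules) fact that such fields have polynomial coefficients in the exponential coordinates, $\cP(N^-)$ is preserved.

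The analogous calculation for $\ell(X)\tilde f(g)=\tfrac{d}{dt}\bigl|_{t=0}\tilde f(\exp(-tX)g)$ is easy for $X\in\fn^-$, where one obtains a right-invariant vector field on $N^-$, again of polynomial-coefficient type. The interesting case is $X\in\fb^+$. Writing $\exp(-tX)n=n\cdot\exp(-t\,\mathrm{Ad}(n^{-1})X)$ and splitting $\mathrm{Ad}(n^{-1})X=Y(n)+Z(n)$ with $Y(n)\in\fn^-$ and $Z(n)\in\fb^+$, I observe that $\mathrm{Ad}(n^{-1})=\exp(-\mathrm{ad}(\log n))$ is a \emph{finite} polynomial series in the exponential coordinates of $n\in N^-$ because $\mathrm{ad}(\fn^-)$ is nilpotent. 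Hence $Y$ and $Z$ are polynomial-valued functions on $N^-$, and expanding to first order in $t$ gives
\[
\ell(X)\tilde f|_{N^-}(n)\;=\;\lambda(Z(n))\,f(n)\;-\;\bigl(D^L_{Y(n)}f\bigr)(n),
\]
the sum of a multiplication by a polynomial and a polynomial-coefficient first-order operator, both preserving $\cP(N^-)$.

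The main obstacle is really bookkeeping in the last computation: one must verify, using the $\Lambda$-point formalism of Sec.~\ref{exp-sec}, that the adjoint decomposition $\mathrm{Ad}(n^{-1})X=Y(n)+Z(n)$ is indeed polynomial across both even and odd directions, and that the above first-order expansion survives the noncommutativity (which it does, as $\exp(-t(Y+Z))$ and $\exp(-tY)\exp(-tZ)$ agree to order $t$). Once invariance is known on $\fg$, the PBW theorem applied to $\fg=\fn^-\oplus\fb^+$ gives invariance on all of $\cU(\fg)$: any monomial $X_1\cdots X_r\in\cU(\fg)$ acts as the iterated composition of operators each preserving $\cov{\cP}_\lambda$. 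Finally, the invariance on any smaller invariant open set $U$ with $p^{-1}(U)\subset\Gamma$ is inherited from $\Gamma$ by restriction, since the identification $L^\chi(\Gamma)\simeq Hol(N^-)$ is local in nature and both actions are compatible with restriction.
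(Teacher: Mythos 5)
Your proof is correct and is essentially the paper's argument in different clothing: both rest on conjugating the acting element by $n^{-1}$, splitting along $\fg=\fn^-\oplus\fb^+$ (PBW), letting the $\fb^+$-component contribute $\lambda$-scalars and the $\fn^-$-component act by polynomial-coefficient operators, with everything depending polynomially on $n$ because $\mathrm{Ad}(n^{-1})=\exp(-\ad(\log n))$ terminates. The paper packages this via the SHCP identity $(D^R_Z\cov{p})(X)(g)=\pm\,\cov{p}((g^{-1}.Z)X)(g)$ together with Lemma \ref{linebun}, whereas you realize $\ell(X)$ and $\partial(X)$ as explicit differential operators in the exponential coordinates on $N^-$; the content is the same.
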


We now want to establish a fundamental pairing between
a certain Verma module and the space of polynomials inside $L^\chi(\Gamma)$.
We start by recalling the notion of pairing between $\fg$-modules.

\begin{definition} 
Let $M_1$, $M_2$ be two modules for $\fg$. By a 
\textit{$\fg$-pairing} between them we
mean a bilinear form $\langle \cdot \, , \, \cdot \rangle$
on $M_1  \times M_2$ with the property that:
$$
\langle X m_1, m_2 \rangle =  \langle m_1, -(-1)^{|m_1||X|}X m_2 \rangle, \qquad
m_i \in M_i,\, X \in \fg.
$$
\end{definition}

Since the $M_i$ are modules for $\cU(\fg)$ this implies that
$$
\begin{array}{c}
\langle X_1  \dots X_rm_1,m_2 \rangle = 
\langle m_1, (-1)^{r+|m_1|(|X_1|+ \dots + |X_r|)+l_{odd}(w)} 
X_r \dots X_1 m_2 \rangle 
\\ \\ m_i \in M_i,\, X_j \in \fg.
\end{array}
$$
where $l_{odd}(w)$ is the (minimum) number of odd transpositions appearing in 
the permutation $w:(1, \dots, r) \mapsto (r, \dots, 1)$.
The map $X \mapsto -X$ of $\fg$ is an involutive anti-automorphism of $\fg$. 
It extends uniquely to an involutive anti-automorphism 
$u \mapsto u^T$ of 
$\cU(\fg)$. 
The $\fg$-pairing requirement is equivalent to
$$
\langle u m_1,m_2 \rangle = \langle m_1, (-1)^{|u||m_1|}u^T m_2 \rangle, \qquad 
m_i \in M_i, \, u \in  \cU(\fg).
$$
We refer to this as a $\cU(\fg)$-pairing also. 
The pairing is said to be \textit{non-
singular} if 
$\langle m_1,m_2  \rangle  = 0$ for all $m_2$ (resp. for all $m_1$) 
implies that $m_1 = 0$ (resp. $m_2 = 0$).

\medskip

\begin{proposition}
Let $u$ and $v$ in $\cU(\fg)$, $f \in \cov{\cP}$.

\begin{enumerate}
\item
$
 (\partial(u^T)f)(1_G)\, = \,(\ell(u)f)(1_G)
$
\item
$
\partial(u)\ell(v)(f)(1_G)=(-1)^{|u||v|}\ell(v)\partial(u)(f)(1_G)
$
\end{enumerate}
 where $1_G$ denotes the identity element in $G$. 

\end{proposition}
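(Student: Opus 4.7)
The plan is to dispose of (2) first, as a global identity on $\cO_G$, and then use it to derive (1) by induction on the degree of $u$ in the standard filtration of $\cU(\fg)$.

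For (2), unwinding the definitions shows that $\partial(X)$ is the left-invariant super vector field generated by $X \in \fg$, while $\ell(X)$ is \emph{minus} the right-invariant one: at a point $g \in G$,
\[
\partial(X)f(g) = \tfrac{d}{dt}\big|_{t=0} f(g \exp(tX)), \qquad \ell(X)f(g) = -\tfrac{d}{dt}\big|_{t=0} f(\exp(tX)\,g).
\]
Because left and right translations of $G$ on itself commute as group actions, the corresponding left- and right-invariant super vector fields supercommute on all of $\cO_G$, so $\partial(X)\ell(Y) = (-1)^{|X||Y|}\ell(Y)\partial(X)$. Since both $\partial$ and $\ell$ are algebra homomorphisms $\cU(\fg) \to \mathrm{End}(\cO_G(W))$, iterating this relation yields $\partial(u)\ell(v) = (-1)^{|u||v|}\ell(v)\partial(u)$ on all of $\cO_G(W)$. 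Evaluating at $1_G$ gives (2); the stronger global identity will be used below.

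For (1), I induct on the degree $n$ of $u$. When $n = 0$, $u \in \C$ and both sides trivially agree. For $n = 1$ and $u = X \in \fg$, we have $u^T = -X$ and directly
\[
\partial(X^T)f(1_G) = -\partial(X)f(1_G) = -\tfrac{d}{dt}\big|_{t=0}f(\exp(tX)) = \ell(X)f(1_G),
\]
since in both cases one is applying the tangent vector $X \in T_{1_G}G$ to $f$ with a sign. For the inductive step, suppose (1) holds for all elements of degree $< n$ and write $u = Xv$ with $X \in \fg$ and $v$ of degree $n-1$. Using the antipode formula $(Xv)^T = (-1)^{|X||v|} v^T X^T = -(-1)^{|X||v|} v^T X$ together with the fact that $\partial$ is an algebra map,
\[
\partial((Xv)^T)f(1_G) = -(-1)^{|X||v|}\,\partial(v^T)\partial(X)f(1_G).
\]
The inductive hypothesis, applied to $v^T$ with the new function $\partial(X)f$, replaces $\partial(v^T)(\,\cdot\,)(1_G)$ by $\ell(v)(\,\cdot\,)(1_G)$; part (2), already established globally, moves $\partial(X)$ past $\ell(v)$ at cost $(-1)^{|X||v|}$; finally the base case $n=1$ applied to the function $\ell(v)f$ yields $\partial(X)\ell(v)f(1_G) = -\ell(X)\ell(v)f(1_G)$. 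Collecting the three signs produces
\[
\partial((Xv)^T)f(1_G) = \ell(X)\ell(v)f(1_G) = \ell(Xv)f(1_G),
\]
which closes the induction.

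The main obstacle is the bookkeeping of Koszul signs: those arising from $(uv)^T = (-1)^{|u||v|} v^T u^T$ and from the supercommutativity in (2). They cancel cleanly in pairs, but both factors' parities must be tracked consistently; apart from this, the argument is essentially formal and relies only on the commutation of left and right translations plus the identity $D^L_X f(1_G) = D^R_X f(1_G) = Xf(1_G)$ at the origin.
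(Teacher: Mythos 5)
Your proof is correct and follows essentially the same route as the paper: both arguments reduce to degree-one elements of $\cU(\fg)$, where the identities amount to the supercommutation of left- and right-invariant derivations (coassociativity of $\mu^*$) and the fact that both derivatives evaluated at $1_G$ reduce to the tangent vector $X$ applied to $f$. The only real difference is that you spell out the reduction to degree one --- the global form of (2) and the induction on the filtration degree with its Koszul signs for (1) --- which the paper compresses into the phrase ``it is enough to prove for $u=X$ and $v=Y$''; your sign bookkeeping in that induction is correct.
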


\begin{proof}

$(1)$. It is enough to
prove for $u=X$ and $v=Y$ both in $\fg$. 
We can rewrite our equality as:
$$
(\ep \otimes 1)(1 \otimes -X)\mu^*(f)\, = \, 
(1 \otimes \ep)(-X \otimes 1)\mu^*(f)
$$
where $\ep$ is the counit morphism:
\(
\ep(f) = f(1_G)\quad \forall f\in  \cov{\cP}
\).
 We have:
$$
(\ep \otimes 1)(1 \otimes X)\mu^*(f)\,=\,  (1 \otimes X)(\ep \otimes 1)\mu^*(f)
\,=\,X(f)
$$
since $(\ep \otimes 1)\mu^*(f)=f$.
On the other hand:
$$
(1 \otimes \ep)(X \otimes 1)\mu^*(f)=(X \otimes 1)(1 \otimes \ep)\mu^*(f)=
X(f).
$$

\medskip\noindent
$(2)$. Again it is enough to
prove for $u=X$ and $v=Y$ both in $\fg$.
\begin{align*}
\partial(X)\ell(Y)(f)(1_G) &=
(\ep \otimes 1)(1 \otimes X)\mu^*(-Y \otimes 1)\mu^*(f)= \\ 
&=(-1)^{|X||Y|}(-Y \otimes X)\mu^*(f)(1_G).
\end{align*}
because $(\ep \otimes 1)\mu^*(f)=f$.
Similarly
\begin{align*}
\ell(Y)\partial(X)(f)(1_G)&=
(1 \otimes \ep)(-Y \otimes 1)\mu^*(1 \otimes X)\mu^*(f)=\\ 
&=
(-Y \otimes X))\mu^*(f)(1_G).
\end{align*}
\end{proof}

\begin{lemma}
\label{lemma:idealVerma}
Let $\lambda\in\fh^\ast$, and let
\[
\cM_\lambda\coloneqq \sum_{\al >0} \cU(\fg)\fg_\al + \sum_{H \in \fh}  
\cU(\fg)(H+\lambda(H) ) 
\]
then $\cM_\lambda$ is a left ideal and 
\[
\cU(\fg)=\cM_\lambda\oplus \cU(\fn^-)
\]
\end{lemma}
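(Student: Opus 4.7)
The fact that $\cM_\lambda$ is a left ideal is immediate from its definition: it is written as a sum of subspaces of the form $\cU(\fg)\cdot x_i$, each of which is automatically a left $\cU(\fg)$--submodule. So the only real content is the direct-sum decomposition $\cU(\fg)=\cM_\lambda\oplus\cU(\fn^-)$.

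My plan is to recognize $\cU(\fg)/\cM_\lambda$ as a Verma-type module. Let $\C_{-\lambda}$ be the one-dimensional $\fb^+$-module on which $\fn^+$ acts by zero and $H\in\fh$ acts by $-\lambda(H)$ (so that $H+\lambda(H)$ kills the generator $v_{-\lambda}$). Form the induced module
\[
M(\lambda)\coloneqq \cU(\fg)\otimes_{\cU(\fb^+)}\C_{-\lambda},
\]
and consider the canonical surjection $\pi\colon \cU(\fg)\to M(\lambda)$, $u\mapsto u\otimes v_{-\lambda}$. The first step is to check that $\ker\pi=\cM_\lambda$. One containment is clear from the two types of generators of $\cM_\lambda$; for the reverse containment, $\ker\pi$ is the left ideal generated by $\{b-\epsilon(b):b\in\cU(\fb^+)\}$ (where $\epsilon$ is the action on $v_{-\lambda}$), and the identity $ab-\epsilon(ab)=a(b-\epsilon(b))+\epsilon(b)(a-\epsilon(a))$ lets one reduce inductively to $b\in\fh\cup\fn^+$, producing exactly the generators of $\cM_\lambda$.

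The second step is to invoke the PBW theorem for Lie superalgebras applied to the triangular decomposition $\fg=\fn^-\oplus\fh\oplus\fn^+$. This gives the isomorphism $\cU(\fg)\cong \cU(\fn^-)\otimes\cU(\fb^+)$ of right $\cU(\fb^+)$--modules, so base change yields $M(\lambda)\cong\cU(\fn^-)\otimes_\C \C_{-\lambda}\cong\cU(\fn^-)$ as a left $\cU(\fn^-)$--module. Now the composition
\[
\cU(\fn^-)\hookrightarrow \cU(\fg)\twoheadrightarrow \cU(\fg)/\cM_\lambda\xrightarrow{\sim}\cU(\fn^-)
\]
is the identity. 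Surjectivity gives $\cU(\fg)=\cM_\lambda+\cU(\fn^-)$, and injectivity gives $\cM_\lambda\cap\cU(\fn^-)=0$, which together prove the direct-sum decomposition.

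The only real obstacle is the step asserting $\ker\pi=\cM_\lambda$: it is easy to see $\cM_\lambda\subseteq\ker\pi$, but the opposite inclusion uses that $\cU(\fb^+)$ is generated as an algebra by $\fh\cup\fn^+$ together with the inductive identity above. Everything else is a direct application of the super PBW theorem, which we are free to use in the classical setting of basic Lie superalgebras.
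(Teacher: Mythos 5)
Your argument is correct. Note that the paper does not actually write out a proof here: it simply cites Lemma 4.6.6 of \cite{vsv1} for the ordinary case and asserts that the supersetting is identical, so what you have done is supply the standard argument that this citation stands in for --- realizing $\cU(\fg)/\cM_\lambda$ as the induced module $\cU(\fg)\otimes_{\cU(\fb^+)}\C_{-\lambda}$ and applying the Poincar\'e--Birkhoff--Witt factorization $\cU(\fg)\cong\cU(\fn^-)\otimes\cU(\fb^+)$. The only genuinely super ingredient is the super PBW theorem, which holds for the basic classical Lie superalgebras over $\C$, so your reduction of $\ker\pi$ to the generators in $\fh\cup\fn^+$ and the resulting direct-sum decomposition go through exactly as you describe.
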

\begin{proof} {
For the ordinary setting this is Lemma 4.6.6 in \cite{vsv1}.
As for the supersetting it is the same. }
\end{proof}

\begin{theorem} \label{theorem5} 
There is a non-singular $\cU(\fg)$-pairing between $\cov{\cP}$ and the
Verma module $V_\lambda$. Moreover every non-zero submodule of 
$\cov{\cP}$ contains the element $\cov{1}$ 
corresponding to the constant function $1 \in \cP$. In particular,
the submodule $\cov{\cI}$ of   $\cov{\cP}$ generated 
by  $\cov{1}$  is irreducible and is the unique
irreducible submodule of  $\cov{\cP}$. 
Finally,  $\cov{\cI}$  is the unique irreducible module
of lowest weight $-\lambda$.
\end{theorem}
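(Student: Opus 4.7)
The overall strategy is to realize $\cov{\cP}$ as the restricted graded dual of the Verma module $V_\lambda$ and then deduce all of the structural assertions from the uniqueness of the irreducible submodule of a dual Verma module. I define the pairing by $\langle f,[u]\rangle := (\partial(u) f)(1_G)$ for $f\in \cov{\cP}$ and $u\in \cU(\fg)$. To show it descends to a pairing on $\cov{\cP}\times V_\lambda$ one must check $(\partial(m)f)(1_G)=0$ for $m\in \cM_\lambda$; in fact the stronger statement $\partial(\cM_\lambda)=0$ on $\cov{\cP}$ holds. Indeed, from $(\partial(X)\cov{p})(g)=\frac{d}{dt}\big|_0 \cov{p}(ge^{tX})$ and the right-$B^+$-covariance of $\cov{p}$, the decomposition $ge^{tX}=n\cdot(be^{tX})$ for $g=nb\in\Gamma$ yields $\partial(X)\cov{p}=0$ for $X\in\fn^+$ (because $\chi|_{N^+}=1$) and $\partial(H)\cov{p}=-\lambda(H)\cov{p}$ for $H\in\fh$, so $\partial$ annihilates both generating families of $\cM_\lambda$.

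For the $\cU(\fg)$-pairing property I combine the two identities from the preceding proposition, namely $(\partial(u^T)f)(1_G)=(\ell(u)f)(1_G)$ and $(\partial(u)\ell(v)f)(1_G)=(-1)^{|u||v|}(\ell(v)\partial(u)f)(1_G)$. Writing out $\langle \ell(Y)f,[u]\rangle$ and $\langle f,[Yu]\rangle$ and applying these two identities gives the super Leibniz rule with respect to the $\ell$-action on $\cov{\cP}$. For non-degeneracy I invoke the splitting $\cU(\fg)=\cM_\lambda\oplus\cU(\fn^-)$ of Lemma~\ref{lemma:idealVerma}, which identifies $V_\lambda$ with $\cU(\fn^-)$ as a super vector space. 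Under the isomorphism $\cov{\cP}\cong \Pol(N^-)$ of Proposition~\ref{gamma-prop} and the identification of $\cU(\fn^-)$ with left-invariant differential operators on $N^-$ via the exponential coordinates of Section~\ref{exp-sec}, the pairing becomes $(p,u)\mapsto (u\cdot p)(1_{N^-})$, i.e.\ the classical non-degenerate duality between polynomials and $\cU(\fn^-)\cong \Sym(\fn^-_0)\otimes \Lambda(\fn^-_1)$.

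The key remaining step is to show that every nonzero $\cU(\fg)$-submodule $W\subset \cov{\cP}$ contains $\cov{1}$. By the pairing property and non-degeneracy, $W^\perp:=\{v\in V_\lambda:\langle f,v\rangle=0\ \forall f\in W\}$ is a proper $\cU(\fg)$-submodule of $V_\lambda$. Since $V_\lambda$ is cyclic with generator $[1]$, we have $[1]\notin W^\perp$, so some $f\in W$ satisfies $\langle f,[1]\rangle=f(1_G)\neq 0$. Now $W$ is $\ell(\fh)$-invariant, hence respects the decomposition $\cov{\cP}=\bigoplus_{d\in D^+}\cov{\cP}_d$; and for $d\neq 0$ any $\cov{p}\in\cov{\cP}_d$ corresponds to a polynomial on $N^-$ without constant term, so $\cov{p}(1_G)=0$. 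Thus the weight-$\lambda$ component $f_0$ of $f$ satisfies $f_0(1_G)=f(1_G)\neq 0$, forcing $f_0$ to be a nonzero multiple of $\cov{1}$; hence $\cov{1}\in W$.

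From this, $\cov{\cI}=\ell(\cU(\fg))\cov{1}$ is irreducible because any nonzero submodule of $\cov{\cI}$ must contain $\cov{1}$ and so equals $\cov{\cI}$; and $\cov{\cI}$ is the unique irreducible submodule of $\cov{\cP}$ by the same reasoning. Finally, a weight-space argument shows $\ell(X)\cov{1}=0$ for every $X\in \fg_{-\alpha}$ with $\alpha\in P$ (since its image would have weight $\lambda-\alpha$, which is not in the spectrum $\lambda+D^+$ of $\cov{\cP}$), while $\ell(H)\cov{1}=\lambda(H)\cov{1}$; this characterizes $\cov{\cI}$ as the unique irreducible lowest-weight module with extreme weight $\lambda$, matching (up to the standard sign convention) the irreducible quotient of $V_\lambda$. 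The main obstacle I anticipate is keeping the super signs in the $\cU(\fg)$-pairing property organized and ensuring that the cyclic-vector property of Verma modules on which the argument rests holds uniformly across the list of basic classical Lie superalgebras; everything else reduces to the identifications supplied by Proposition~\ref{gamma-prop} and Lemma~\ref{lemma:idealVerma} together with standard PBW computations.
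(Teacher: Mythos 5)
Your proof is correct and follows essentially the same route as the paper: the pairing $\langle f,u\rangle=(\partial(u)f)(1_G)$, descent to $V_\lambda$ via Lemma~\ref{lemma:idealVerma}, non-degeneracy, and then cyclicity of $V_\lambda$ combined with weight-space orthogonality to show that every nonzero submodule contains $\cov{1}$. The only cosmetic differences are that the paper inserts the sign $(-1)^{|u||f|}$ into the definition of the pairing so that the super-pairing axiom comes out exactly as stated, and that it obtains non-degeneracy in the $\cov{\cP}$ variable from the equality of weight-space dimensions (Cor.~\ref{cor::dimspectrum}) rather than from your explicit identification with the polynomial/differential-operator duality on $N^-$.
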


\begin{proof} 
The proof is the same as for the ordinary setting, let us sketch it.
We first define: 
$$
<,>:\cov{\cP} \times \cU(\fg) \lra \C, \qquad
<f,u>:=(-1)^{|u||f|}(\partial(u)f)(1_G)
$$
In order for $<,>$ to be a $\fg$-pairing, we need to verify:
$$
<\ell(c)f, u > \, = \, < f, (-1)^{|f||c|}c^Tu>, \qquad c, u \in \cU(\fg), 
f \in \cov{\cP}
$$
where $(\cdot)^T$ denotes the antiautomorphism of $\cU(\fg)$ induced by
$X \mapsto -X$ with $X\in \fg$. 
We have by the previous proposition:
$$
\begin{array}{rl}
<\ell(c)f, u > &= (-1)^{|u|(|c|+|f|)}(\partial(u)\ell(c)f)(1_G) =
(-1)^{|u||f|}(\ell(c)\partial(u)f)(1_G) =  \\ \\ 
&=(-1)^{|u||f|}(\partial(c^T)\partial(u)f)(1_G) = \\ \\
&=(-1)^{|u||f|}( \partial(c^Tu)f)(1_G) = < f,  (-1)^{|f||c|}c^Tu>
\end{array}
$$ 
By Lemma \ref{lemma:idealVerma}, in order to prove that the bilinear
 map $<,>$ descends to a $\fg$-pairing between $\cov{\cP}$ and 
$V_\lambda$ we need to prove that 
\[
u \in \cM_\lambda \quad
\Longleftrightarrow \quad (\partial(u)f)(1_{G}) =0
\]
For sufficiency, we notice that 
$<f,X_\al>= \partial(X_\al)(f)(1_{{G}})=D^L_{X_\al}(f)={\lambda(-X_\alpha)}=0$ by \eqref{def::Lchi}. 
Again by \eqref{def::Lchi}, we have that
$<f,H> = D^L_{H}(f)=-\lambda(H)f(1_{{G}})$. 
For necessity, suppose that $(\partial(u)f)(1_{{G}}) =0$ for 
each $f\in \cov{\cP}$.
By Lemma \ref{lemma:idealVerma} it is enough to notice that for 
each $X\in \cU(\fn^-)$ there exists a polynomial 
$p\in \cP$ such that $D^L_X \cov{p}(1_{{G}})\neq 0$.
So we have obtained  a nonsingular pairing 
\[
\cov{\cP}_\lambda \subset V_{-\lambda}^*
\] 
They are both weight spaces, for each weight the corresponding 
weight spaces having the
same dimension (See Corollary \ref{cor::dimspectrum}), hence they are isomorphic.

\medskip
{More explicitly,  
the functions $\cov{(t^r)}=\cov{(t_{\al_1}^{r_1} \dots  t_{\al_m}^{r_m})}$,  
corresponding to the coordinate polynomials 
$t^r$ on $N^-$, are weight vectors for the action of $\fh$ for the
weight $r-\lambda$. Hence $\cov{\cP}$ is a weight module with the multiplicities 
defined in Sec. \ref{actiontorus-sec}. 
We shall prove that every non-zero $\ell$-invariant subspace W
of $\cov{\cP}$ contains the vector $\cov{1}$ 
defined by the constant function $1$ on $N^-$.
Now $W$ is a sum of weight spaces and if it does not contain $\cov{1}$, then 
$W$ is contained in the sum of all weight spaces corresponding to the weights
$-\lambda + r$ where $r = (r_i)$ with some $r_i > 0$. Now 
$<\ell(H)m_1,m_2> = 
-<m_1,\partial(H)m_2>$, for all $H \in \fh$, $m_1 \in \cov{\cP}$, 
$m_2 \in V_\lambda$. 
This shows that the weight space of $\cov{\cP}$
for the weight $\theta$ is orthogonal to the weight space of $V_\lambda$ 
for the weight $\phi$ unless $\theta = -\phi$.
Let $v$ be a non-zero vector of highest weight $\lambda$ in $V_\lambda$. Since
$W$ is contained in the span of weights other than $-\lambda$, 
we have $<W, v> = 0$.
Hence, for all $g \in \cU(\fg)$, $w \in W$ we have 
$<\ell(g)w, v> = 0$. So $<w, g^T v> = 0$
for all $g \in \cU(\fg)$. But $v$ is cyclic for 
$V_\lambda$ and so we have $<w, V_\lambda> = 0$ for
all $w \in W$. This means that $W = 0$, contradicting the hypothesis that
$W \neq 0$.
Thus every non-zero submodule of $\cov{\cP}$ contains the submodule $\cov{\cI}$
generated by $\cov{1}$. This submodule is then the unique irreducible 
submodule of $\cov{\cP}$.
The weights of $\cov{\cI}$ are of the form $-\lambda+d$ where 
$d$ is a positive integral
linear combination of the simple roots, and $\cov{1}$ has weight $-\lambda$. 
It is then
clear that $\cov{1}$ is the lowest weight of $\cov{\cI}$. 
This fact, together with its
irreducibility, characterizes it uniquely.
}

\end{proof}

\subsection{Harish-Chandra decomposition} \label{hcdec-sec}

From now on we fix a positive
admissible system $P$ for $\fg$ (see Sec. \ref{admissible-sec}).

Let $P^+$ be the analytic supergroup corresponding to the
subsuperalgebra $\fp^+=\sum_{\be\in P} \fg_\be$. 
Similarly define $P^-$. Let $K$ be the (ordinary) 
analytic subgroup of $G$ corresponding to the
Lie superalgebra $\fk=\fk_0$.

\begin{proposition} \label{lemma7}
The morphism $\phi:P^- \times K \times P^+ \lra G$, defined as
$(p^-,k,p^+) \mapsto p^-kp^+$ in the functor of points notation,
is a complex analytic isomorphism of $P^- \times K \times P^+$ onto 
an open set $\Omega \subset G$.
\end{proposition}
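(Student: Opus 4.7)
The plan is to apply Lemma \ref{lemma1} twice, in a manner parallel to the classical Harish--Chandra decomposition argument.

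First I would introduce the intermediate subsupergroup $Q^+\subset G$ corresponding to the Lie subsuperalgebra $\fq^+:=\fk\oplus \fp^+$ of $\fg$. That $\fq^+$ is in fact a Lie subsuperalgebra uses admissibility of $P$: the $\fk$--stability of $\fp^+$ gives $[\fk,\fp^+]\subset \fp^+$, and $[\fp^+,\fp^+]\subset \fp^+$ by the very definition of an admissible system, as recalled before Theorem \ref{hcrepdef-obs}. Now apply Lemma \ref{lemma1} to the pair $K, P^+$ inside $Q^+$. The Lie algebra hypothesis $\Lie(K)+\Lie(P^+)=\Lie(Q^+)$ is immediate. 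For part (2), $\Lie(K)_1\cap \Lie(P^+)_1=0$ holds trivially, since $\fk=\fk_0$ is purely even; and $\red{K}\cap \red{P^+}=\{1\}$ is, via the SHCP description of these subsupergroups, the content of the classical Harish--Chandra decomposition for the reductive group $G_0$ equipped with the Cartan splitting $\fg_0=\fk_0\oplus \fp^+_0\oplus \fp^-_0$. Thus the multiplication $\alpha_1 : K\times P^+\to G$ is an analytic diffeomorphism onto an open full submanifold $KP^+$; since $\Lie(KP^+)=\fq^+=\Lie(Q^+)$ and $Q^+$ is connected, this image is $Q^+$.

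Next I apply Lemma \ref{lemma1} to $P^-$ and $Q^+$ inside $G$. Now $\Lie(P^-)+\Lie(Q^+)=\fp^-+(\fk\oplus \fp^+)=\fg$ is the full admissible decomposition. The odd intersection $\Lie(P^-)_1\cap \Lie(Q^+)_1=\fp^-_1\cap \fp^+_1$ vanishes because $\fp^\pm_1$ are direct sums of odd root spaces associated to disjoint sets of roots ($\pm P_{n,1}$). The reduced intersection $\red{P^-}\cap \red{Q^+}=\{1\}$ is once more the classical Harish--Chandra theorem for $G_0$. Lemma \ref{lemma1}(2) therefore shows that $\alpha_2 : P^-\times Q^+ \to G$ is an analytic diffeomorphism onto an open full submanifold $\Omega:=P^-Q^+\subset G$. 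The composition
\[
\phi \;=\; \alpha_2 \circ (\id_{P^-}\times \alpha_1)\colon P^-\times K\times P^+\;\longrightarrow\; \Omega
\]
is then the desired complex analytic diffeomorphism onto $\Omega=P^-KP^+$.

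The main obstacle is the pair of reduced intersection statements $\red{K}\cap \red{P^+}=\{1\}$ and $\red{P^-}\cap \red{KP^+}=\{1\}$. These are not super--specific issues but invocations of the classical Harish--Chandra decomposition for the ordinary reductive group $G_0$, which may be cited from the classical literature or obtained directly from the fact that $\red{P^\pm}$ is generated by the unipotent one--parameter subgroups attached to the noncompact (even) roots of $\fg_0$ and hence does not meet the centralizer $\red{K}$ of the central element of $\fk$ defining the complex structure. A secondary technical point is verifying that $Q^+$ is closed in $G$, which follows at the reduced level from the classical argument and is then lifted through the SHCP equivalence of Proposition \ref{shcpeq}.
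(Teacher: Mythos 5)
Your proposal is correct and follows essentially the same route as the paper: the paper likewise forms the subsupergroup $S$ with Lie superalgebra $\fk\oplus\fp^+$, applies Lemma \ref{lemma1} first to $K\times P^+$ inside $S$ and then to $P^-\times KP^+$ inside $G$, citing the classical theory (Helgason) for the reduced intersection statements and admissibility for the odd-part intersections. The only cosmetic difference is how you identify $KP^+$ with $Q^+$ (connectedness plus equality of Lie superalgebras, versus the paper's bijectivity-plus-superdimension count), which is immaterial.
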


\begin{proposition}\label{prop-lemma2}
\begin{enumerate}
\item $G_rK^\R(P^+)^\R$ is an open subsupermanifold in $(P^-KP^+)^\R$.
\item $G_rKP^+=(\wt{G_r}\wt{K}\wt{P^+},
\cO_{P^-KP^+}|_{\wt{G_r}\wt{K}\wt{P^+}})$ is a complex open subsupermanifold in $P^-KP^+$.
\item $\wt{G_r} \cap \wt{K}\wt{P^+}=\wt{K_r}$.
\end{enumerate}
\end{proposition}

\begin{proof} For the first statement,
observe that $\fg_r \oplus \fk \oplus \fp^+=\fg$, by Prop. 
\ref{lemma3}, because 
$\fk \oplus \fp^+ \supset \fb$.
Hence by Lemma \ref{lemma1} we have that $G_rKP^+$ is open in $G$ and
since $\wt{G_r}\wt{K}\wt{P^+} \subset \wt{P^-}\wt{K}\wt{P^+}$ (see \cite{helgason} pg. 389), 
we have
$G_rKP^+$ is open in $P^-KP^+$.
The second statement is topological, so it is
true because of the ordinary theory. 
\end{proof}

We now turn to the construction of the complex structure of $G_r/K_r$.

\begin{proposition}  \label{cpx-st} We have
$G_r/K_r \cong (G_{{r}}KP^+/KP^+)^\R$. Hence 
and $G_r/K_r$ acquires a natural 
{$\wt{G_r}$ invariant complex structure}. 
\end{proposition}

\begin{proof}
This is an immediate consequence of Lemma \ref{lemma1} (2) and (3), together
with Prop. \ref{prop-lemma2}. 
\end{proof}

\subsection{Harish-Chandra representations and their geometric
realization}

\label{actionGr-subsec}

In this section we give a global realization 
of the Harish-Chandra infinitesimal representations
studied in Sec. \ref{hcmod-sec}. 

\begin{definition}
Let the complex vector superspace $V$ be a $\fg$-module via the
representation $\pi$.
We say that $V$ is a \textit{$(\fg_r, K_r)$-module}
if there exists a representation $\pi_{K_r}$ of $K_r$ such that
\begin{enumerate}
\item $\pi(\Ad(k)X)=\pi_{K_r}(k)\pi(X)\pi_{K_r}(k)^{-1}$
\item $V=\sum_\tau V(\tau)$ where the sum is algebraic 
and direct and $V(\tau)$
is the span of all the linear finite dimensional subspaces 
corresponding to the irreducible representation associated with
the $K_r$-character $\tau$.
\end{enumerate}
We say that $V$ is a \textit{$(\fg_r, \fk_r)$-module} if 
$$
V=\sum_{\theta \in \Theta} V(\theta)
$$ 
where the sum is algebraic and direct, $\Theta$ denotes the
set of equivalence classes of the finite dimensional irreducible
representations of $\fk$ and $V(\theta)$ is the sum of all
representation occurring in $V$ lying in one of such classes
$\theta \in \Theta$.
\end{definition}

We say that the  $(\fg_r, K_r)$-module $V$ is a \textit{Harish-Chandra
module} (or HC-module for short) 
if each $V(\tau)$ is finite dimensional and 
$V$ is finitely generated as $\cU(\fg)$ module. 
Similarly we can define
also the notion of Harish-Chandra modules for $(\fg_r, \fk_r)$-modules.

\medskip
We say that a vector is \textit{$K_r$-finite} if it lies in a 
finite dimensional
$K_r$ stable subspace.

We now want to study the action of $G_r$ on a superspace of sections
of the line bundle $L^\chi$ over $\red{G/B^+}$. 
Since $\red{G_rB^+}$ is open in $\red{G}$
(see Lemma \ref{lemma2}), we can consider
$L^\chi(G_rB^+)$ and
since $G_r$ acts on the left
on $G_rB^+$ we have a well defined action of $G_r$ on 
the  Frech\'et superspace $L^\chi(G_rB^+/B^+)$:
$$
\begin{cases}
(g \cdot f) =l_{g^{-1}}^*f & g\in \red{G_r}\\
X.f  = D^R_{-X} f & X\in \fg
\end{cases}
$$
Next lemma is a simple generalization of  Theorem 11, 
pg 312 in \cite{vsv3} {and holds in a general setting}.

\begin{lemma}
\label{lemma:Raja}
Let the notation be as above. 
Let $F$ be a 
 Frech\'et representation of $G_r$ on which $G_r$ acts
via $\pi=(\pi_0, \rho)$.  If $v$ is a 
weakly analytic vector for $\red{G_r}$, then 
\[
\overline{\cU(\fg)v}\subseteq F
\]  
is the smallest closed $G_r$-invariant subspace of $F$ containing $v$.
\end{lemma}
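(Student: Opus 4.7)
Set $W:=\overline{\cU(\fg)v}$ and let $V$ denote the smallest closed $G_r$-invariant subspace of $F$ containing $v$; here ``$G_r$-invariant'' is interpreted in the SHCP sense, namely $\pi_0(\red{G_r})$-stable, with $V\cap F^\infty$ also stable under $\rho(\fg)$. Since weak analyticity forces $v\in F^\infty$, we have $\cU(\fg)v\subseteq F^\infty\subseteq F$, so the containment $W\subseteq F$ is automatic, and the real content is the identification $W=V$. My plan is to follow the classical template of Theorem 11, p.\,312 of \cite{vsv3}, with the SHCP structure taking care of the odd part.

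\textbf{Inclusion $V\subseteq W$: $W$ is $G_r$-invariant.} Stability of $W\cap F^\infty$ under $\rho(\fg)$ is immediate, since each $\rho(X)$ is a continuous endomorphism of $F^\infty$ mapping $\cU(\fg)v$ into itself. For $\pi_0(\red{G_r})$-stability, the key input is that in a Fr\'echet representation weak analyticity is equivalent to strong analyticity of the orbit map, and the space of weakly analytic vectors is stable under $\rho(\fg)$, hence under $\cU(\fg)$. Thus every $w=\rho(u)v$ with $u\in\cU(\fg)$ is weakly analytic, and in a neighbourhood of the identity one has the convergent expansion
\[
\pi_0(\exp X)w=\sum_{n\geq 0}\frac{1}{n!}\rho(X)^n w,\qquad X\in\fg_{r,0}
\]
whose partial sums lie in $\cU(\fg)w\subseteq\cU(\fg)v$, so $\pi_0(\exp X)w\in W$ for $X$ small. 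An openness-plus-closedness argument applied to $S_w:=\{g\in\red{G_r}:\pi_0(g)w\in W\}$, combined with the intertwining identity $\pi_0(g_0)\rho(X)=\rho(\Ad(g_0)X)\pi_0(g_0)$ and connectedness of $\red{G_r}$, upgrades this to $S_w=\red{G_r}$. Since $w$ ranges over the dense subspace $\cU(\fg)v$ of $W$, continuity of $\pi_0(g)$ yields $\pi_0(g)W\subseteq W$.

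\textbf{Inclusion $W\subseteq V$.} Conversely, $V$ is closed and $\pi_0(\red{G_r})$-stable, so for each $X\in\fg_{r,0}$ the analytic curve $t\mapsto\pi_0(\exp tX)v$ lies in $V$; differentiation at $t=0$, legitimate by strong analyticity, places $\rho(X)v$ in $V$, and iteration gives $\cU(\fg_{r,0})v\subseteq V$. Combined with $\rho(\fg_{r,1})$-stability of $V\cap F^\infty$ (part of the SHCP notion of subrepresentation) and the complex linearity of $\rho$, this yields $\cU(\fg)v\subseteq V$, and closedness of $V$ upgrades this to $W\subseteq V$.

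\textbf{Main difficulty.} The crux is the bootstrap from the infinitesimal statement ``$\pi_0(\exp X)v\in W$ for small $X$'' to the global statement ``$\pi_0(g)v\in W$ for all $g\in\red{G_r}$''. A naive induction on products of exponentials founders because one does not know a priori that $\pi_0(g)$ preserves $W$. The correct device is the openness-plus-closedness of $S_w$, which requires both the $\rho(\fg)$-stability of the analytic vectors (so that a Taylor expansion of $\phi(g)=\pi_0(g)w$ converges around every $g_0$, not merely at $e$) and the $\Ad$-equivariance of $\rho$. Once these are in place, the super refinement is essentially automatic: $\rho$ is defined on all of $\fg$, and the SHCP axioms provide the right compatibility with $\pi_0$.
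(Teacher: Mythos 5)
Your proposal is correct in substance and rests on the same two pillars as the paper's proof (stability of the weakly analytic vectors under $\cU(\fg)$, and propagation from the identity to all of $\red{G_r}$ by analyticity plus connectedness), but the globalization mechanism is genuinely different. The paper works dually: for a continuous functional $\lambda$ vanishing on $W=\overline{\cU(\fg)v}$ it forms the scalar analytic function $f_{X,\lambda}(g)=\lambda(\pi_0(g)\rho(X)v)$, observes that $Zf_{X,\lambda}(1_G)=\lambda(\rho(ZX)v)=0$ for all $Z\in\cU(\fg_0)$ since $\rho(ZX)v\in\cU(\fg)v\subseteq\ker\lambda$, concludes $f_{X,\lambda}\equiv 0$ by the identity theorem on the connected group, and then invokes Hahn--Banach. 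This buys two things over your route: it never needs the (true but nontrivial) equivalence of weak and strong analyticity in a Fr\'echet space, and ``all derivatives vanish at one point'' globalizes in a single step, with no open--closed bookkeeping. Your direct Taylor-series version buys a more concrete picture of why $\pi_0(\exp X)w$ stays in $W$, and your treatment of the minimality inclusion $W\subseteq V$ is more explicit than the paper's one-line appeal to the intertwining identity.

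One step of your argument is imprecise as stated and is exactly where the dual formulation earns its keep. Openness of $S_w=\{g:\pi_0(g)w\in W\}$ at a point $g_0\neq e$ requires expanding $\pi_0(\exp tX)\pi_0(g_0)w$ with partial sums $\pi_0(g_0)\rho\bigl((\Ad(g_0^{-1})X)^n\bigr)w$, so you need $\pi_0(g_0)$ to carry \emph{all} of $\cU(\fg)v$ into $W$, not merely $w$; you must therefore run the connectedness argument on $\bigcap_{w\in\cU(\fg)v}S_w$, and then a uniformity-of-convergence-radius issue appears when checking that this intersection is open. The clean repair is to note that each $S_w$ is all of $\red{G_r}$ as soon as it contains a neighbourhood of $e$: compose the analytic orbit map with the quotient $F\to F/W$ (or with each $\lambda\in W^\perp$) and apply the identity theorem --- which is precisely the paper's argument. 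With that adjustment your proof closes.
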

\begin{proof}
For each $X\in \cU(\fg)$ and $\lambda \in F^\ast$ 
(the topological dual of $F$), define the function 
$f_{X,\lambda}\colon \red{G_r}\to \C$:
\[
f_{X,\lambda}(g)=\lambda(\pi_0(g){\rho(X)}v)
\]
Let $\lambda$ be such that $\lambda=0$ on $\overline{\cU(\fg)v}$. 
It is easily checked that the infinitesimal action of $\cU(\fg_{{0},r})$ 
preserves the analytic vectors hence 
we obtain
\[
Zf_{X,\lambda}({1_G})=0 \quad \mbox{ for each } Z\in \cU({\fg_0})
\]
Since $\red{G_r}$ is connected, we conclude that $f_{X,\lambda}=0$ on 
$\red{G_r}$.
Hence by the Hahn-Banach theorem we conclude that 
$\mathrm{span}\{\pi_0(G){\rho(X)}v\}$ is contained in 
$\overline{\cU(\fg)v}$. Since this is true for all 
$X\in \cU(\fg)$ we conclude that
$\overline{\cU(\fg)v}$ is $\red{G_r}$ invariant. Since
\[
{\rho}(Y)\pi_0(g){\rho(X)} v= \pi_0(g) {\rho((g^{-1}Y) X)} v
\] 
it is also clear that it is the smallest $G_r$-invariant subspace 
of $F$ containing $v$.
\end{proof}

Let us introduce the notation:

\beq 
\wt{L^\chi(U)}:=\{\wt{f} \, | \, f \in L^\chi(U)\}\label{red-space}
\eeq

\noindent 
which is meaningful because \(L^\chi\) is a subsheaf of the structural sheaf.

\begin{theorem} \label{theorem6} 
Let $S=G_rB^+/B^+$ and assume $\wt{F^1}:=\wt{L^\chi(S)} \neq 0$.
Then:
\begin{enumerate}
\item $F^1$ contains an element $\psi$ which is an
analytic continuation of $\cov{1}$; 
\item $F^{11}:= \overline{\ell(\cU(\fg))\psi}$ $\subset  L^\chi(S)$
is a Fr\'echet $G_r$-module, $K_r$-finite and with $K_r$-finite
part  $\ell(\cU(\fg))\psi) = \cov{\cP}_\lambda$.
\item 
the $K_r$-finite
part  ${\cov{\cP}_{\lambda}}$ is
isomorphic to $\pi_{-\lambda}$ the irreducible representation
with lowest weight $-\lambda$. In particular 
$\lambda(H_\al) 
\in \Z_{\geq 0}$ for all compact positive roots $\alpha$.
\end{enumerate}
\end{theorem}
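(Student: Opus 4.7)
The plan is to establish the three claims in order, leveraging the restriction to the big cell where the polynomial picture of Theorem \ref{theorem5} is available.

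For part (1), I would exploit the continuous injection $F^1 \hookrightarrow F^2 := L^\chi(\Gamma_2)$ from Corollary \ref{spectrum-cor2}(1), together with the identification $F^2(\tau) = F(\tau)|_{\Gamma_2}$ of Corollary \ref{spectrum-cor2}(3), which, combined with Corollary \ref{cor::dimspectrum}, anchors each $A_r$-isotypic component of $F^2$ to a polynomial weight space in $\cov{\cP}$. Since $A_r\subset G_r$ acts on $F^1$ by left translation, the projections $P(\tau)$ of Proposition \ref{frechetfacts} yield weight components $f_\tau \in F^1(\tau)$; the nonvanishing hypothesis modulo $J$ guarantees that at least one such $f_\tau$ is nonzero in $F^1/J$, and its restriction $f_\tau|_{\Gamma_2}$ is a nonzero element of $\cov{\cP}$. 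Because the $\ell$-action of $\cU(\fg)$ preserves $\cov{\cP}$ (Proposition \ref{lemma4bis}) and commutes with restriction to $\Gamma_2$, the $\cU(\fg)$-submodule $W \subset \cov{\cP}$ generated by the family $\{f_\tau|_{\Gamma_2}\}$ is nonzero, and by the uniqueness/irreducibility statement of Theorem \ref{theorem5}, $W$ must contain the generator $\cov{1}$. Writing $\cov{1} = \sum_i \ell(X_i)(f_{\tau_i}|_{\Gamma_2})$ for suitable $X_i\in \cU(\fg)$, the vector $\psi := \sum_i \ell(X_i) f_{\tau_i} \in F^1$ then satisfies $\psi|_{\Gamma_2} = \cov{1}$ and is the desired analytic continuation.

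For part (2), I would first observe that $\psi$, being a holomorphic section, is a weakly analytic vector for the left-translation action of $\red{G_r}$ on $F^1$. Lemma \ref{lemma:Raja} then identifies $F^{11} := \overline{\ell(\cU(\fg))\psi}$ with the smallest closed $G_r$-invariant subspace of $F^1$ containing $\psi$, and in particular makes it a Fr\'echet $G_r$-module. The restriction to $\Gamma_2$ is $\cU(\fg)$-equivariant and maps $\ell(\cU(\fg))\psi$ isomorphically onto the irreducible submodule $\cov{\cI} = \ell(\cU(\fg))\cov{1} \subset \cov{\cP}$ produced by Theorem \ref{theorem5}. Since $\cov{\cI}$ is an irreducible highest-weight $\fg$-module whose $A_r$-weight spaces are finite-dimensional (Corollary \ref{cor::dimspectrum}), Proposition \ref{infinitesimalhc} implies it is a Harish-Chandra module, so all its $K_r$-isotypic components are finite-dimensional. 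Lifting this decomposition back through the restriction isomorphism produces a dense algebraic sum of finite-dimensional $K_r$-isotypic subspaces inside $\ell(\cU(\fg))\psi \subset F^{11}$; Lemma \ref{finiteness-cor}(1) then shows that $\ell(\cU(\fg))\psi$ exhausts the $K_r$-finite vectors of $F^{11}$.

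For part (3), the $K_r$-finite part $\ell(\cU(\fg))\psi$ is $\cU(\fg)$-isomorphic (via restriction) to $\cov{\cI}$, and by Theorem \ref{theorem5} the latter is the unique irreducible $\cU(\fg)$-module of lowest weight $-\lambda$; so it is precisely $\pi_{-\lambda}$. The integrality of $\lambda$ on the compact coroots follows from Proposition \ref{infinitesimalhc} applied to $\pi_{-\lambda}$: its lowest-weight vector $\cov{1}$ generates a finite-dimensional irreducible $\fk$-module of lowest weight $-\lambda|_{\fh}$, which forces $-\lambda$ to be antidominant integral for the positive compact system $P_k$, equivalently $\lambda(H_\alpha)\in\Z_{\geq 0}$ for every $\alpha\in P_k$.

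The main obstacle is the construction of $\psi$ in part (1): one must produce the canonical lowest-weight generator $\cov{1}$ from an a priori arbitrary non-zero element of $F^1$. This requires the interplay of three ingredients that are not individually sufficient: the continuous injectivity of the restriction $F^1\hookrightarrow F^2$, the precise identification $F^2(\tau) = F(\tau)|_{\Gamma_2}$ that forces weight components of $F^1$-vectors to become polynomials on the big cell, and the rigid forcing statement of Theorem \ref{theorem5} that $\cov{1}$ lies in every non-zero $\fg$-submodule of $\cov{\cP}$. The hypothesis $F^1\neq 0$ mod $J$ is precisely what rules out the pathological case where all surviving weight components fail to generate a submodule reaching $\cov{1}$.
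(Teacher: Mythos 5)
Your part (1) is correct and is genuinely different from the paper's argument: you construct $\psi$ directly by writing $\cov{1}=\sum_i\ell(X_i)p_{\tau_i}$ inside the nonzero $\cU(\fg)$-submodule of $\cov{\cP}$ generated by the restricted isotypic components, and then lift that identity back to $F^1$; the paper instead argues by contradiction, showing that if the weight $-\lambda$ were absent then every element of the dense subspace $(F^1)^0$, hence of $F^1$, would have vanishing reduced value at $1_G$, and transporting this by the transitive $\red{G_r}$-action to force $F^1\subseteq J$. Both work; yours is constructive and in fact only needs $F^1\neq 0$.

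There is, however, a genuine gap in part (2), at the step establishing $K_r$-finiteness. You assert that because $\cov{\cI}$ is an irreducible lowest-weight module with finite-dimensional $\fh$-weight spaces, Proposition \ref{infinitesimalhc} makes it a Harish-Chandra module. That is not what Proposition \ref{infinitesimalhc} says: its hypothesis is $\dim(\cU(\fk)v)<\infty$ for the extreme weight vector $v$, and finite-dimensionality of the $\fh$-weight spaces does not imply this --- $\cU(\fk)v$ can be an infinite-dimensional $\fk$-Verma-type module all of whose weight spaces are one-dimensional, which is exactly what happens when $\lambda(H_\al)\notin\Z_{\geq 0}$ for some compact root $\al$. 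Since your part (3) then derives $\lambda(H_\al)\in\Z_{\geq 0}$ \emph{from} the Harish-Chandra property, the argument as written is circular. The repair is the paper's: $K_r$ genuinely acts on $F^1=L^\chi(S)$ because $S$ is $G_r$-invariant; $F^1$ is $A_r$-finite because each $F^1(\tau)$ injects into $F^2(\tau)=F(\tau)|_{\Gamma_2}$, which is finite-dimensional by Corollary \ref{cor::dimspectrum}; and Lemma \ref{finiteness-cor}(2), applied to the compact subgroup $A_r\subset K_r$, promotes $A_r$-finiteness to $K_r$-finiteness of $F^1$ and hence of $F^{11}\subseteq F^1$. With that in place, your appeal to Lemma \ref{finiteness-cor}(1) to identify $\ell(\cU(\fg))\psi$ as the $K_r$-finite part, and your part (3), go through.
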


\begin{proof} 
We first establish the $K_r$-finitess of $F^1$.
From (2) of Corollary \ref{spectrum-cor2} it follows  that the 
subspace $F^1(\tau)$ injects (through the restriction morphism) 
in $F^2(\tau)$, where $F^2$ denotes $L^\chi(\Gamma_2)$ 
(we recall that $\Gamma_2=(G_r B^+\cap \Gamma)^0/B^+$). 
From (3) 
of Corollary \ref{spectrum-cor2}, we know that $\dim F^2(\tau)=\dim F(\tau)$. 
By (3) of Corollary \ref{spectrum-cor} and (2) of Corollary 
\ref{cor::dimspectrum}, we finally obtain $\dim F(\tau)<+\infty$. 
Hence  $F^1$ is $A_r$-finite. By Corollary \ref{finiteness-cor}
the $A_r$ finiteness implies in our case the $K_r$-finiteness. 
Hence $F^1$ is $K_r$-finite

\medskip
We now go to the proof of $(1)$.
Assume that the $K_r$-finite part  $(F^1)^0=\sum F^1(\tau)$ does not 
include the weight $-\lambda$, in other words we assume there is no analytic
continuation of $\cov{1}$ to $S$. 
By Corollary \ref{spectrum-cor} and Corollary \ref{spectrum-cor2}, $(F^1)^0$ is isomorphic  to a subset of the set of polynomials in the $t_\alpha$ (see Sec.
\ref{linebu-sec} for the notation). Since   $\cov{1}\in F^2(-\lambda)\supseteq F^1 (-\lambda)$ we have that all the elements $f$ in $(F^1)^0$ are zero when evaluated at $1_G$. Hence, by the density of $(F^1)^0$ in $F^1$, all the elements in $F^1$ vanish at $1_G$. Using the $G_r$ action it follows that $\tilde{f}=0$  for all  $f\in F^1$.

\medskip
As for $(2)$, $F^{11}:= \overline{\ell(\cU(\fg))\psi}$ is a  Frech\'et 
superspace, since it is a closed  subspace of a  Frech\'et superspace. 
The fact that $F^{11}$ is a $G_r$-module follows from Lemma~\ref{lemma:Raja}. 
Hence $F^{11}$ is a $G_r$ submodule of 
$F^1$, and  it is $K_r$-finite since it is a submodule of
the $K_r$ finite module $F^1$. $\cJ=\ell(\cU(\fg))\psi$ is clearly 
a highest weight $\cU(\fg)$ module. 
Since $F^{11}$ is the closure of the $K_r$-finite
subspace {$\cJ$} subspace, its $K_r$-finite part is
precisely $\cJ$.

\medskip

$(3)$. We know that $\cJ \subset (F^1)^0 \hookrightarrow (F^2)^0\simeq F^0$.
Clearly $\cJ \hookrightarrow \cov{\cI}:=\cU(\fg)\cov{1} \subset F^0$,
but since by \ref{theorem5} $\cov{\cI}$ is irreducible, we have 
$\cJ=\cov{\cI}$. $\cJ$ is the irreducible lowest weight module of
lowest weight $-\lambda$ or equivalently $\cJ$ is the irreducible 
highest weight module of highest weight $-\lambda$ with respect to
the positive system $-P$. The $K_r$-finiteness of $\cJ$ implies that
$-\lambda(H_{-\alpha}) \geq 0$, hence our result.

\end{proof}

\begin{corollary}\label{cor-thm6}
Let the notation be as above. Then $\cJ=\cU(\fg)\psi \subset (F^1)^0$
is the irreducible 
Harish-Chandra module with highest weight $-\lambda$
with respect to the positive system $-P$.
\end{corollary}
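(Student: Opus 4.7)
The plan is to combine the structural results of Theorem~\ref{theorem6} with the uniqueness statement in Theorem~\ref{theorem4}. Essentially every ingredient is already present in the preceding theorem; what remains is to check the Harish--Chandra property for the algebraic module $\cJ$ itself (as opposed to its Fr\'echet closure $F^{11}$) and then to invoke the uniqueness of such modules.

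First I would recall from Theorem~\ref{theorem6}(2) that $\cJ=\ell(\cU(\fg))\psi$ lies in $(F^1)^0$ and is precisely the $K_r$-finite part of $F^{11}$, and from Theorem~\ref{theorem6}(3) that $\cJ$ is isomorphic to the irreducible $\fg$-module $\pi_{-\lambda}$ of lowest weight $-\lambda$ with respect to the positive system $P$. Reversing the positive system, $\cJ$ is the irreducible highest weight $\fg$-module of highest weight $-\lambda$ with respect to the opposite system $-P$, with highest weight vector $\psi$.

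For the Harish--Chandra property, I would use that $F^{11}$ is $K_r$-finite by Theorem~\ref{theorem6}(2), so each isotypic component $F^{11}(\tau)$ is finite dimensional. Since $\cJ$ is $\cU(\fg)$-stable and hence $\fk_r$-stable (and in fact $K_r$-stable by exponentiation inside the connected component), the isotypic component $\cJ(\tau)$ embeds into $F^{11}(\tau)$ and is therefore finite dimensional. Since $\cJ$ decomposes as an algebraic sum of its $K_r$-isotypic pieces, it is a $(\fg_r,K_r)$-HC-module in the sense of Sec.~\ref{actionGr-subsec}. As an alternative, one can apply Proposition~\ref{infinitesimalhc}: the vector $\psi$ generates the finite-dimensional $\fk$-module $\cU(\fk)\psi$ (it sits in a single finite-dimensional $K_r$-isotypic component of $F^{11}$), and Proposition~\ref{infinitesimalhc} then forces $\cJ$ to be an HC-module.

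Finally, the fact that $\cJ$ is \emph{the} irreducible HC-module of highest weight $-\lambda$ with respect to $-P$ follows from Theorem~\ref{theorem4}(2), which guarantees uniqueness up to isomorphism of the irreducible highest weight HC-module with prescribed highest weight. The only bookkeeping point to watch, and the closest thing to an obstacle, is that when passing from the positive system $P$ (with respect to which $\psi$ is a lowest weight vector) to $-P$ (with respect to which $\psi$ is a highest weight vector) the notions of compact roots and of the subgroup $K_r$ are unchanged, so the HC framework carries over verbatim.
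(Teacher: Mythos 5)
Your proof is correct and, in its second half, is exactly the paper's argument: the paper's proof consists of the single line that the statement is an immediate consequence of Proposition~\ref{infinitesimalhc}, which is the alternative route you describe (with the irreducibility and the identification of the highest weight with respect to $-P$ already supplied by Theorem~\ref{theorem6}). Your more detailed verification via the finite-dimensional isotypic components of $F^{11}$ is a harmless elaboration of the same idea, so no further comment is needed.
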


\begin{proof} This is an immediate consequence of Proposition
\ref{infinitesimalhc}. 
\end{proof}

\begin{definition}
We say that a dominant integral 
weight $\lambda$ is $K$\textit{-integrable} if
the {$\fk$} irreducible representation associated with $\lambda$ 
can be lifted to {$K$}. 
\end{definition}

\begin{definition}
An holomorphic character  $\chi_\lambda$ of \(A\)  is $K$\textit{-integrable} if \(\lambda\)  is dominant integral for the positive compact roots and if the associated {$\fk$}  representation  can be lifted to {$K$}. In this case, we also say that \(\lambda\) is \(K\)-integrable.
\end{definition}

As in the classical case we have the following Lemma.

\begin{lemma}
If \(\lambda\) is \(K\)\textit{-integrable} then the associated representation of \(K\) is  finite-dimensional and  holomorphic.
\end{lemma}
\begin{proof}
This fact is entirely classical and it is proved in \cite{hc}.
\end{proof}

\begin{theorem} \label{theorem8}
Let the notation be as above. Assume the following:
\begin{itemize}
\item $\dim(\fc)\geq 1$. 
\item $\lambda \in \fh^*$ is integral and
$\lambda(H_\al) \geq 0$ for all $\al$ compact positive root. 
\item $\lambda$ is  $K$-integrable. 
\end{itemize}
Then 
$\wt{L^\chi(G_rB^+)} \neq 0$ and
$F^{11}$ is a $G_r$ representation
whose $K_r$-finite part is the lowest weight representation
$\pi_{-\lambda}$. 
\end{theorem}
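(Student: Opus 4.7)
Part (2) is an immediate application of Theorem~\ref{theorem6} to $S=G_rB^+/B^+$ once part~(1) is established, so the real task is to exhibit a non-zero super-section of $L^\chi$ over $G_rB^+/B^+$ that is not in the odd ideal $J$. The plan is to produce such a section by combining the Harish--Chandra decomposition with classical Borel--Weil applied to the ordinary complex group $K$. The hypothesis $\dim\fc\ge 1$ is what ensures, via Theorem~\ref{hcrepdef-obs}, that an admissible positive system, and hence the decomposition $\fg=\fp^-\oplus\fk\oplus\fp^+$ together with the groups $P^\pm$, is actually available.

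First I would set up the geometry. By Proposition~\ref{lemma7} the product map is a super-biholomorphism $P^-\times K\times P^+\to \Omega:=P^-KP^+$ onto an open sub-supermanifold of $G$, and the subsequent proposition shows $G_rKP^+$ is open in $\Omega$; Proposition~\ref{lemma3} with $\fm_r=\fg_r$ then gives that $G_rB^+\subset G_rKP^+$ is open in $G$. Because $B^+=B^+_K\cdot P^+$ with $B^+_K:=B^+\cap K$ the standard Borel of $K$, and $K\cap P^+=\{e\}$, one obtains a biholomorphism of complex supermanifolds $\Omega/B^+\cong P^-\times(K/B^+_K)$, in which $G_rB^+/B^+$ sits as a non-empty open sub-supermanifold of the connected complex supermanifold $\Omega/B^+$.

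Next, since $\lambda$ is integral, $K$-dominant (i.e.\ $\lambda(H_\al)\ge 0$ for $\al\in P_k$), and of $K$-type, there is an irreducible holomorphic $K$-module of highest weight $\lambda$, so classical Borel--Weil for the connected reductive complex group $K$ furnishes a non-zero holomorphic section $s_0\in H^0(K/B^+_K,L^\chi|_{K/B^+_K})$, viewed equivalently as $\tilde s_0\in\cO(K)$ satisfying $\tilde s_0(kb_K)=\chi(b_K)^{-1}\tilde s_0(k)$ for $b_K\in B^+_K$. Pulling back through the projection $\Omega\to K$, $p^-kp^+\mapsto k$, I define $\tilde\psi(p^-kp^+):=\tilde s_0(k)$ on $\Omega$. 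The claim is then that $\tilde\psi\in L^\chi(\Omega/B^+)$: right $B^+_0$-equivariance reduces, via the identity $p^+b_Kp^+_b=b_K(b_K^{-1}p^+b_K)p^+_b$, to the classical equivariance of $s_0$ together with the triviality of $\chi$ on $P^+$; the odd constraint $D^L_X\tilde\psi=-\lambda(X)\tilde\psi=0$ for $X\in\fn^+_1=\fp^+_1$ is automatic, since $\tilde\psi$ does not depend on the $P^+$-coordinate and is therefore killed by every left-invariant vector field tangent to $P^+$. By construction $\tilde\psi$ is purely even, so it is not in $J$.

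Finally, because $\Omega/B^+$ is connected ($P^-$, $K$, $P^+$ are each connected via the exponential diffeomorphism of Section~\ref{exp-sec}) and $G_rB^+/B^+$ is a non-empty open sub-supermanifold, the super-holomorphic analytic continuation principle forces $\tilde\psi|_{G_rB^+/B^+}\ne 0$ modulo $J$; this establishes part~(1), and part~(2) follows at once from Theorem~\ref{theorem6}. The main technical obstacle is the verification in the third step that the pulled-back Borel--Weil section satisfies both the right-$B^+_0$-equivariance and the $\fp^+_1$-derivative conditions defining $L^\chi$; once that is in place, the rest is a straightforward combination of the decompositions of Section~\ref{hcdec-sec}, classical Borel--Weil, and analytic continuation.
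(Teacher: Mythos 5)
Your proposal is correct and follows essentially the same route as the paper: the paper also reduces to exhibiting a non-zero section over $G_rB^+/B^+$, takes the highest-weight matrix coefficient $a_{11}(k)=(\sigma_\lambda(k)v_\lambda)_{v_\lambda}$ on $K_r$ (which is exactly the Borel--Weil section you invoke), extends it trivially across $P^\pm$ via the Harish--Chandra decomposition of Proposition~\ref{lemma7}, restricts to $G_rB^+$, and checks the two equivariance conditions defining $L^\chi$. The only cosmetic difference is that the paper concludes non-vanishing from $a_{11}(1_G)=1$ rather than by analytic continuation.
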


\begin{proof}
It is enough
to show that $\wt{L^\chi(G_rB^+)} \neq 0$, since (2) is
an immediate consequence of Theorem \ref{theorem6}.
Let $\sigma_\lambda$ be the finite dimensional irreducible representation 
of $K$ with highest weight $\lambda$ on the  vector space $V$.
Let $v_\lambda$ be the corresponding highest weight vector. 
We can define the coefficient of the representation $\sigma_\lambda$ 
corresponding to  $v_\lambda$ that is the nonzero section  
$a_{11}:K \lra \C$, $a_{11}(k)=
(\sigma_\lambda(k)v_\lambda)_{v_\lambda}$, that is the 
$v_\lambda$  component of $\sigma_\lambda(k)v_\lambda$ 
corresponding to the weight decomposition of $V$. 
Using {Prop.}~\ref{lemma7}, we can extend $a_{11}$  to a nonzero section 
in $\cO(P^-KP^+)$. Since $G_r$ is embedded into 
$(P^-KP^+)^\R$ and $a_{11}(1_G)=1$
we obtain a non zero section of $G_r$, that is $a_{11} \in \cO(G_r)$. 
It is immediate to verify:
$$
r_b^\ast a_{11}=\chi_0^\lambda(b)^{-1}a_{11}, \quad b \in \widetilde{B^+},
\qquad D^L_{X} a_{11}=-\lambda(X) a_{11}, \quad X \in \fb^+
$$
so that $a_{11} \in L^\chi(S)$ as requested.
\end{proof}

\subsection{The Siegel superspace}\label{siegel-subsec}
To illustrate  the theory developed so far, we want to give an example, interesting by itself, where the various geometrical tools developed so far (admissible systems, symmetric superspaces,...) come into play.

Consider the closed {complex}
analytic subsupergroup $P$ of the {complex} orthosymplectic supergroup 
$\rOsp(m|2n)$ defined, 
via its functor of points, as: 
$$
P(T)=\left\{\left( \begin{matrix} a & 0 & \al_2 \\ 
b_{11}\al_2^t a & b_{11} & b_{12} \\
0 & 0 & (b_{11}^t)^{-1}\end{matrix} \right) \mid 
\begin{cases}
a^t a= 1\\
(b_{11}^{-1}b_{12})-(b_{11}^{-1}b_{12})^t=\al_2^t\al_2
\end{cases}
\right\}
$$
where $a \in \rGL(m)(T)$, $b_{11} \in \rGL(n)(T)$, 
$b_{12} \in \rM(n)(T)$, $\al_2 \in \rM(m|0,0|n)(T)$, $T \in \smflds_\C$,
($\rGL$, $\rM$ denoting respectively the general linear (super)group
and the (super)matrices).

\medskip

As for any closed analytic subsupergroup of an analytic Lie supergroup
it is possible to construct the quotient $\rOsp(m|2n)/P$. This is a complex
analytic supermanifold, that we call the \textit{super Lagrangian} 
and denote it by $\cL$. 
Notice that, 
by the very definition, the reduced manifold of \(\cL\) is $\wt{\cL}$,
the ordinary Lagrangian manifold in $\rSp(2n,\C)$,
and  we have a natural transitive action of 
$\rOsp(m|2n)$ on  the supermanifold $\cL$.
We also define $\cLf$ as the open subsupermanifold of $\cL$ corresponding to
the open subset $\cLfo$ of  $\wt{\cL}$:
$$
\begin{array}{rl}
\cLfo&=\left\{\left( \begin{array}{c} Z_1 \\ Z_2  \end{array} \right) \,
| \,  Z_1^tZ_2 \, \, \hbox{symmetric}, \,
\det(Z_2) \neq 0 \right\}\Big/\rGL(n)(\C)  \\ \\
&\cong 
\left\{  
\left(  \begin{array}{c} Z  \\ 1  \end{array} \right)  \, \big| \,
Z \, \hbox{symmetric} 
\right\} =  \{  X+iY \, | \, X, Y \in \rM(2n, n)(\R), \,
\hbox{symmetric}\}.
\end{array}
$$

We now want to characterize the functor of points of $\cLf$. We start
observing that
we can always choose {\sl uniquely} a representative of the class 

\begin{equation}
\left( \begin{matrix} a & \al_1 & \al_2 \\
\be_1 & b_{11} & b_{12} \\ \be_2  & b_{21} & b_{22}
\end{matrix} \right)P(T)\in \cLf, \quad
\hbox{in the form} \quad
\label{elform}
\left( \begin{matrix} 1 & \zeta & 0 \\
\zeta^t & z & -1 \\ 0 & 1 & 0\end{matrix} \right)
\end{equation}

This is equivalent, to 
to find
 $z$, $\zeta$, $u$, $v$, $w$, $\xi$ depending on $\al_i$, $\be_i$
and $a$, $b_{ij}$ in the following equation:
$$
\left( \begin{matrix} 1 & 0 & -\zeta \\
0 & 0 & 1 \\ \zeta^t & -1 & z-\zeta^t \zeta\end{matrix} \right)
\left( \begin{matrix} a & \al_1 & \al_2 \\
\be_1 & b_{11} & b_{12} \\ \be_2  & b_{21} & b_{22}
\end{matrix} \right)=
\left( \begin{matrix} u & 0 & \xi \\
v\xi^tu & v & w \\ 0 & 0 & (v^t)^{-1}\end{matrix} \right)
$$
Notice that there is no loss of generality in assuming $b_{21}=1$.
The check the solutions are unique and
compatible with the conditions defining $\rOsp(m|2n)$ is a direct calculation.
The values obtained are:
$$
u=a-\al_1\be_2, \quad \xi=\al_2-\al_1b_{22}, \quad v=1, 
\quad w=b_{22}, \quad z=b_{11}, \quad \zeta=\al_1
$$

\begin{proposition} \label{tpts}
The $T$-points of the supermanifold $\cLf$ are identified
with the matrices in $\rOsp(m|n)(T)$ of the form:
$$
\cLf(T)\simeq \left\{\left( \begin{array}{ccc} 1 & \zeta & 0 \\
\zeta^t & z & -1 \\ 0 & 1 & 0\end{array} \right)\, | 
\,\zeta^t \zeta + z^t -z=0 
\right\}.  \quad 
\hbox{Hence} \quad
\cLf \cong \C^{\frac{n^2+n}{2}|mn}.
$$
\end{proposition}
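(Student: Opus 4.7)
The plan is to carry out the proof in two steps: first identify $\cLf(T)$ explicitly as a set of matrices in $\rOsp(m|2n)(T)$ cut out by a single quadratic constraint, and then solve that constraint to obtain the supermanifold isomorphism with $\C^{(n^2+n)/2|mn}$.

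For the first step, I would apply the preceding lemma to represent each $T$-point of $\cLf$ uniquely by a matrix
\[
M \;=\; \begin{pmatrix} 1 & \zeta & 0 \\ \zeta^t & z & -1 \\ 0 & 1 & 0 \end{pmatrix},
\]
with $z$ an $n \times n$ block of even sections and $\zeta$ an $m \times n$ block of odd sections of $\cO(T)$. The remaining task is to pin down which such $M$ actually lie in $\rOsp(m|2n)(T)$, i.e., to substitute $a=1$, $\alpha_1=\zeta$, $\alpha_2=0$, $\beta_1=\zeta^t$, $\beta_2=0$, $b_{11}=z$, $b_{12}=-1$, $b_{21}=1$, $b_{22}=0$ into the six relations \eqref{Sp-eq}. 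A routine check shows that five of the six reduce to tautologies and the remaining one, $\alpha_1^t\alpha_1 + b_{11}^t b_{21} - b_{21}^t b_{11} = 0$, becomes precisely $\zeta^t\zeta + z^t - z = 0$. This establishes the first half of the statement.

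The decisive observation for the second step is that $\zeta^t\zeta$ is automatically antisymmetric: every entry of $\zeta$ is odd and hence anticommutes with itself, so
\[
(\zeta^t\zeta)_{ji} \;=\; \sum_k \zeta_{kj}\zeta_{ki} \;=\; -\sum_k \zeta_{ki}\zeta_{kj} \;=\; -(\zeta^t\zeta)_{ij}.
\]
Writing $z = z_s + z_a$ with $z_s$ symmetric and $z_a$ antisymmetric, the relation $z - z^t = \zeta^t\zeta$ is equivalent to $2 z_a = \zeta^t\zeta$; this expresses $z_a$ as a function of $\zeta$ and leaves $z_s$ entirely unconstrained. Hence a $T$-point of $\cLf$ is specified, functorially in $T$, by an arbitrary symmetric $n \times n$ block $z_s$ of even sections (that is, $n(n+1)/2$ independent even parameters) together with an arbitrary $m \times n$ block $\zeta$ of odd sections (that is, $mn$ independent odd parameters).

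By the description of $k^{p|q}(T)$ in Example \ref{superspaces}, this is precisely the functor of points of $\C^{(n^2+n)/2|mn}$, and since the bijection is manifestly natural in $T$, Yoneda's lemma delivers the claimed isomorphism of supermanifolds. The only delicate point is the parity-induced antisymmetry of $\zeta^t\zeta$; without it, the number of independent even parameters in $z$ would be miscounted and the dimension would not match. As a sanity check, one can also compute $\dim \cLf = \dim \rOsp(m|2n) - \dim P$ directly from the explicit description of $P$, obtaining the same answer $n(n+1)/2 \,|\, mn$.
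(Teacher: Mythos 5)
Your argument follows the paper's proof in all essentials: reduce each point to the canonical representative supplied by the preceding lemma, extract the single surviving relation $\zeta^t\zeta + z^t - z = 0$ from the defining relations \eqref{Sp-eq}, and count parameters. You are in fact more explicit than the paper at the final step: the paper merely observes that $\cLf$ is cut out by $n(n-1)/2$ quadratic equations in $\C^{n^2|mn}$ and asserts that the result follows, whereas you actually solve the constraint by splitting $z=z_s+z_a$ and noting that the parity-forced antisymmetry of $\zeta^t\zeta$ turns the relation into $z_a=\tfrac12\zeta^t\zeta$, exhibiting $\cLf$ as a graph over the free parameters $(z_s,\zeta)$ and hence as $\C^{(n^2+n)/2|mn}$. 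That is precisely the justification the paper leaves implicit, and it is the right one: a count of equations alone would not give an isomorphism with affine superspace. The one detail you skip and the paper does not: the functor of points of the quotient $\rOsp(m|2n)/P$ is only the \emph{sheafification} of $T\mapsto\rOsp(m|2n)(T)/P(T)$, so a $T$-point of $\cLf$ need not lift to a global matrix in $\rOsp(m|2n)(T)$. One should pass to an open cover $\{T_i\}$ of $T$ over which it does lift, apply the lemma there, and invoke the \emph{uniqueness} of the canonical representative to glue the local data back to a global pair $(z,\zeta)$. This is a one-line repair, not a flaw in the method.
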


\begin{proof} 
Let us choose a suitable open cover $\{T_i\}_{i \in I}$ of $T$, so that
$$\cL (T_i)=(\rOsp(m|2n)/P)(T_i)=\rOsp(m|2n)(T_i)/P(T_i).$$
We can then  write
$$
\cLf(T_i)=\left\{
\left( \begin{array}{ccc} a & \al_1 & \al_2 \\
\be_1 & b_{11} & b_{12} \\ \be_2  & b_{21} & b_{22}
\end{array} \right)P(T_i)\, | \, b_{21} \,
\hbox{invertible} \, \right\}\, 
$$
By (\ref{elform}), we can write:

\begin{equation}\label{siegel-eq}
\left( \begin{array}{ccc} a & \al_1 & \al_2 \\
\be_1 & b_{11} & b_{12} \\ \be_2  & b_{21} & b_{22}
\end{array} \right)P(T_i) \, = \,
\left( \begin{array}{ccc} 1 & \zeta & 0 \\
\zeta^t & z & -1 \\ 0 & 1 & 0\end{array} \right)P(T_i), 
\qquad  \zeta^t \zeta + z^t -z=0
\end{equation}
$\cLf$ is then defined by $n(n-1)/2$ equations in $\C^{n^2|mn}$:
$$
\sum_k \zeta_{ki}\zeta_{kj} + z_{ji}-z_{ij}=0, \qquad 1 \leq i < j \leq n
$$
\end{proof}

\begin{definition}
We define \textit{Siegel superspace} the open supermanifold of $\cLf$
corresponding to the complex open subset: 
$$
\wt{\cS}=\left\{Z \,|\, Z=X+iY, \, X, Y \in \rM(2n,n)(\R) \,
\hbox{symmetric}, \, Y>0 \right\}
\subset \cLfo$$
\end{definition}

By the Chart Theorem (see Ch. 4 in \cite{ccf}) we have that
a $T$-point of the Siegel superspace
$\wt{\cS}$ corresponds to a choice of two matrices $\zeta$ and $z$ with
entries in $\cO(T)$ such that their values at all topological
points of $\wt{T}$ land in $\wt{\cS}$. In other words, $\cS(T)$ consists
of the following elements in 
$\cLf(T)$: 
\begin{align*}
\cS(T) & = \left\{ 
\left( \begin{matrix} 1 & \zeta & 0 \\
\zeta^t & z & -1 \\ 0 & 1 & 0\end{matrix} \right)\mid \begin{cases}
\zeta^t \zeta + z^t -z=0 \\
 z=x+iy, \, 
\widetilde{y}(t)>0, \, \forall t \in \wt{T} 
\end{cases}
\right\} \subset \cLf(T)\\
\end{align*}
We now want to realize the Siegel superspace as a real homogeneous
supermanifold (for our notation
see \cite{cfk1}). 

\medskip
Consider the natural action of the real
orthosymplectic supergroup $\rOsp(m|2n, \R)$ on the quotient
$\rOsp(m|2n)/P$ and restrict it to $\cS$:
\begin{align*}
{\rOsp(m|2n,\R)}(T) \times \cS(T) & \lra   \cS(T) \\ 
\left( \begin{matrix} a & \al_1 & \al_2 \\
\be_1 & b_{11} & b_{12} \\ \be_2  & b_{21} & b_{22}
\end{matrix} \right) , \,\left( \begin{matrix} 
1 & \zeta & 0 \\
-\zeta^t & z & -1 \\
 0 & 1 & 0
 \end{matrix} \right) & \mapsto 
\left( \begin{matrix}
 1 & 
{a\zeta + \al_1z+\al_2  \over \be_2\zeta+b_{21}z+b_{22}} & 0 \\ 
\left({a\zeta + \al_1z+\al_2  \over \be_2\zeta+b_{21}z+b_{22}}\right)^t & 
{\be_1\zeta+b_{11}z+b_{12} \over \be_2\zeta+b_{21}z+b_{22}} 
& -1 \\ 
0 & 1 & 0
\end{matrix} \right)
\end{align*}

\begin{theorem} \label{ospaction-thm}
$\rOsp(m|2n,\R)$ acts transitively on the Siegel superspace and
the stabilizer of the topological point $(iI,0) \in \wt{\cS}$ is the subgroup:
$$
K_\br(T)=Stab(iI,0)(T)= \left\{
\left( \begin{matrix} a & 0 & 0 \\
0&b_{11}& b_{12}  \\ 
0&-b_{12}  & b_{11}  
\end{matrix} \right) \in \rOsp(m|2n,\R)(T) \, \right\}, \qquad T \in \smflds_\R 
$$
which is compact and coincides with its reduced group: $(K_\br)_{red}=K_\br$
and it is equal to $\mathrm O(m) \times  \mathrm U(n)$.
So we have the isomorphism as real supermanifolds:
$$
\cS \cong \rOsp(m|2n,\R)/K_\br.
$$
\end{theorem}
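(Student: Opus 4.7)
The plan is to invoke Proposition \ref{propstab}(3), which reduces the theorem to two tasks: showing that the action of $\rOsp(m|2n,\R)$ on $\cS$ is transitive, and identifying the stabilizer at $p=(iI,0)$ with $K_\br$. For transitivity, by Observation \ref{trans-action-obs} it is enough to verify that the reduced action is transitive (already known classically via Proposition \ref{siegel-quot}, since $\rSp_{2n}(\R) \hookrightarrow \rOsp(m|2n,\R)_{\mathrm{red}}$) and that the differential $(da_p)_e \colon \fosp(m|2n,\R) \to T_p(\cS)$ is surjective at $p=(iI,0)$.

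I would first compute the stabilizer. Plugging $(z,\zeta)=(iI,0)$ into the functor-of-points formula for the action, the equation $g\cdot(iI,0)=(iI,0)$ becomes the pair
\[
ib_{11}+b_{12}=-b_{21}+ib_{22}, \qquad i\al_1+\al_2=0.
\]
Because the entries of $g$ are required to be real (i.e.\ fixed by the antilinear involution that defines $\rOsp(m|2n,\R)$), separating real and imaginary parts immediately yields $b_{22}=b_{11}$, $b_{12}=-b_{21}$, and $\al_1=\al_2=0$. Feeding these relations into the orthosymplectic conditions \eqref{Sp-eq}, one finds that $B:=b_{11}+ib_{21}$ is a unitary matrix (the conditions $b_{11}^tb_{11}+b_{21}^tb_{21}=I$ and $b_{11}^tb_{21}=b_{21}^tb_{11}$ amount to $B^\ast B=I$); the two equations $\be_2^tb_{11}-\be_1^tb_{21}=0$ and $\be_2^tb_{12}-\be_1^tb_{22}=0$ can then be recombined as $(\be_1+i\be_2)^t\overline{B}=0$, forcing $\be_1=\be_2=0$. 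What survives are precisely $a^ta=I$ together with $B\in U(n)$, realising $K_\br \cong O(m)\times U(n)$; since no odd parameters remain, $K_\br$ is purely even and thus coincides with its reduced manifold, and is compact.

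For the differential, a dimension count suffices: the Lie algebra of $K_\br$ is $\fo(m)\oplus\fu(n)$, of real dimension $\tfrac{m(m-1)}{2}+n^2$ (purely even), while $\dim_\R\fosp(m|2n,\R) = \tfrac{m(m-1)}{2}+2n^2+n\mid 2mn$ and $\dim_\R T_p(\cS)=n(n+1)\mid 2mn$. Since Proposition \ref{propstab}(2) identifies $\Lie(K_\br)$ with $\ker(da_p)_e$, the dimensions of source, kernel, and target match, so $(da_p)_e$ is surjective both on the even and on the odd parts. Combined with the classical transitivity at the reduced level, Observation \ref{trans-action-obs} yields transitivity of the $\rOsp(m|2n,\R)$-action on $\cS$, and Proposition \ref{propstab}(3) finally gives the isomorphism $\cS\cong \rOsp(m|2n,\R)/K_\br$.

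The main obstacle is the stabilizer computation, and in particular the careful bookkeeping of the real-form constraint: one must show that the distinguished imaginary base point $iI$ interacts with the reality condition on the odd blocks $\al_i,\be_i$ in such a way as to force them to zero. Once this is handled, the rest of the argument is driven by the general homogeneous-space machinery already established in the paper.
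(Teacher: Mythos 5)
Your proposal is correct and follows essentially the same route as the paper: classical transitivity of the reduced action, surjectivity of $(da_p)_e$ at the base point, and then Observation \ref{trans-action-obs} together with Proposition \ref{propstab}. You simply make explicit the stabilizer computation and the dimension count that the paper's very terse proof leaves to the reader, and both check out.
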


\begin{proof}
The action of $\wt{\rOsp(m|2n, \R)}$ on $\wt{\cS}$ is transitive. 
Consider the supermanifold morphism
$a_p: \rOsp(m|2n, \R) \lra \cS$, $a_p(g)=g \cdot (0,iI)$. 
The differential $(da_p)_I$ at the identity is surjective, hence
the result follows (see also Prop. 9.1.4 in \cite{ccf}). 
\end{proof}

The form we have found for $K_\br$ is not suitable for Lie superalgebra
calculations, so we need to transform $\cS$, so that also $K_\br$ 
transforms accordingly. 
We shall do this via the
{\sl super Cayley transform}.

\medskip

Consider the following linear transformation:
$$
L=\left( \begin{matrix} 1 & 0 & 0 \\
0& i/\sqrt{2i}&  i/\sqrt{2i} \\ 
0&-1/\sqrt{2i}  & 1/\sqrt{2i}  
\end{matrix} \right) \in  \wt{\rOsp(m|2n)} 
$$
where we write $1$ in place of the identity matrix.

Define the open subsupermanifold $\cD=(\wt{\cD}, \cO_{\cLf|_{\wt{\cD}}})$
of $\cLf$ with topological space:
$$
\wt{\cD}=\left\{\begin{pmatrix}0 \\ z \\ 1 \\ \end{pmatrix}
\, |\, z \in \rM(n , n)(\C) \, \hbox{symmetric}, \, 
1-z\zbar>0 \right\}
$$ 

\begin{proposition} 
The linear transformation $L$ 
induces a supermanifold diffeomorphism:
\begin{align*}
\phi_T: \cD(T) & \lra  \cS(T) \\
 \left(\begin{matrix}\eta \\ z \\ 1 \\ \end{matrix} \right)
& \mapsto   \begin{pmatrix} 
\sqrt{2i}\eta(1-z)^{-1} \\ i(z+1)(1-z)^{-1} \\ 1 \end{pmatrix}
\end{align*}
\end{proposition}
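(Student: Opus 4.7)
The plan is to observe that the linear transformation $L$ belongs to $\rOsp(m|2n)(\C)$: the check $L^tJL=J$ is a direct computation in the $(1,2)$, $(2,3)$, $(3,2)$, $(3,3)$ blocks, the remaining blocks being immediate. Hence left multiplication by $L$ defines a global isomorphism of $\cL$ onto itself, functorial in $T$ and invertible with inverse given by left multiplication by $L^{-1}$. The task is then to verify that this isomorphism restricts to the stated diffeomorphism between the open subsupermanifolds $\cD$ and $\cS$ of $\cLf$.

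First I would compute $\phi_T$ at the level of $T$-points. Writing a $T$-point of $\cD \subset \cLf$ in the column form $\begin{pmatrix} \eta \\ z \\ 1 \end{pmatrix}$, left multiplication by $L$ yields
\[
\begin{pmatrix} \eta \\ (i/\sqrt{2i})(z+1) \\ (1/\sqrt{2i})(1-z) \end{pmatrix},
\]
and to bring this back to the canonical form $(\zeta',z',1)^t$ used to represent $T$-points of $\cS$, one right-multiplies by the $\GL_n(T)$-element $\sqrt{2i}(1-z)^{-1}$. This step is legitimate precisely because $1-z$ is invertible on $\cD$: on reduced points $1 - z\bar z>0$ forces $1-z$ to be invertible by the classical theory of the bounded realization of the Siegel domain, and on $T$-points $z$ differs from its reduction only by nilpotents. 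The resulting normalized columns are exactly the formula given in the statement.

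Next I would verify that the image lies in $\cS(T)$. The defining quadratic relation $\zeta'^t\zeta'+(z')^t-z'=0$ is preserved automatically: since $L\in\rOsp(m|2n)(\C)$, left multiplication by $L$ sends Lagrangian subspaces to Lagrangian subspaces, and the subsequent right multiplication by $\sqrt{2i}(1-z)^{-1}$ is only a change of basis, which does not affect the Lagrangian property. For the positivity condition on $\mathrm{Im}(z')$ at reduced level, the odd parameters vanish and one is reduced to the classical Cayley identity showing that $1 - z\bar z > 0$ implies $\mathrm{Im}\bigl(i(z+1)(1-z)^{-1}\bigr) > 0$ (for $z$ complex symmetric). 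The inverse $\phi_T^{-1}$ is constructed symmetrically from $L^{-1}\in\rOsp(m|2n)(\C)$ acting on $T$-points of $\cS$, and $\phi_T$ and its inverse compose to the identity by the associativity of matrix multiplication together with the uniqueness of the canonical form in $\cLf(T)$ established in Prop.~\ref{tpts}.

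The main obstacle is essentially bookkeeping: one must check that $(1-z)$ and $(1-z)^t$ interact correctly with the nilpotent super-parameters when reducing to canonical form, and verify functoriality in $T$. The conceptually substantive input--namely that $L$ is orthosymplectic, that the quadratic Lagrangian relation is automatically preserved, and that positivity transfers through the Cayley map at reduced level--reduces either to a direct matrix computation or to classical facts about the ordinary Cayley transform applied to the reduced manifold $\cD_0 \simeq \cS_0$.
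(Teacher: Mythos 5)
Your proposal is correct and follows essentially the same route as the paper: both compute the left action of $L$ on a $T$-point in canonical form and renormalize by right multiplication by $\sqrt{2i}(1-z)^{-1}$, relying on the classical Cayley transform for the positivity statement at the reduced level. The only difference is how you conclude that $\phi$ is a diffeomorphism --- you exhibit the explicit inverse coming from $L^{-1}$ together with uniqueness of the canonical form, whereas the paper invokes the criterion that a morphism whose reduced map is a homeomorphism and whose differential is surjective is a superdiffeomorphism; both closings are valid.
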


\begin{proof} Let us take a generic element in 
$\cLf(T)$ and multiply it by $L$:
$$
\left( \begin{matrix} 
1 & 0 & 0 \\
0& i/\sqrt{2i}&  i/\sqrt{2i} \\ 
0&-1/\sqrt{2i}  & 1/\sqrt{2i}  
\end{matrix} \right) \left(\begin{matrix}\eta \\ 
z \\ 1 \\ \end{matrix}\right)=
 \left(\begin{matrix}\eta \\ \frac{i}{\sqrt{2i}}(z+1) \\ 
\frac{1}{\sqrt{2i}}(1-z) \\ \end{matrix}\right)\sim
\left(\begin{matrix} \sqrt{2i}\eta(1-z)^{-1}
\\i(z+1)(1-z)^{-1}\\1 \end{matrix}\right)
$$
The map $\phi$ can be extended on the lagrangian $\cLf$
(except at the locus $z=1$) and it is differentiable on 
$\wt{\cLf}\setminus \{z=1\}$. Since $\wt{\phi}$ is an homeomorphism
when restricted to $\wt{\cD}$ and  the  differential 
$\mathrm{d} \phi$ is surjective, the result follows (see \cite{ccf}). 
\end{proof}

We call the diffeomorphism $\phi$ the \textit{super Cayley transform}.
Define:
$$
\rOsp_\DD(m|2n)(T) := L^{-1} \rOsp(m|2n,\R)(T) L, \qquad
K_\DD(T):= L^{-1} K_\br(T) L
$$

\begin{proposition}
$\rOsp_\DD(m|2n)$ is a real form of the orthosymplectic supergroup and
its functor of points is explicitly given by: 
$$
\rOsp_\DD(m|2n)(T) = 
\left\{ \left( \begin{matrix} a_0 & \al_1 & -i\albar_1 \\
\be_1 & b_{11} &  b_{12} \\ 
i\bebar_1& \overline{b_{12}}  & \overline{b_{11}}  
\end{matrix} \right) \right\} \subset \rOsp(m|2n)^\R(T)
$$
where $a_0 \in O(m)$, $T$ is a real supermanifold and,
as usual, $\rOsp(m|2n)^\R$ is the complex orthosymplectic supergroup
viewed as a real supergroup.

\medskip

$K_\DD$ is a real form of the
compact group $K_\br$ and it is given by 
$$
K_\DD= 
\left\{ \left( \begin{matrix} a_0 & 0 & 0 \\
0 & b_{11} &  0 \\ 
0 & 0  & \overline{b}_{11}  
\end{matrix} \right) \, \Big| \, b_{11}\overline{b}_{11}^t=1, \,
a_0 \in O(m) \right\}
$$
\end{proposition}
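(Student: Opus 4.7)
The plan is to identify $\rOsp_\DD(m|2n)$ with the fixed-point functor of a $\C$-antilinear involution obtained by inner conjugation of $\rho_T$ with $L$, and then to unwind this involution in matrix coordinates.

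First I would introduce $\sigma_T\colon \rOsp(m|2n)_{\R}(T)\to \rOsp(m|2n)_{\R}(T)$ by $\sigma_T(g):=L^{-1}\rho_T(LgL^{-1})L$, where $\rho_T$ is the entry-wise complex conjugation recalled in Sec.~\ref{slg-sec}. Since $\rho_T$ is $\C$-antilinear, multiplicative, involutive and functorial in $T$, while inner conjugation by the complex element $L\in\rOsp(m|2n)(\C)$ is $\C$-linear, multiplicative and preserves $\rOsp(m|2n)$, the family $\sigma_T$ inherits all these properties. By construction $\sigma_T(g)=g$ iff $LgL^{-1}\in\rOsp(m|2n,\R)(T)$, so the fixed-point functor of $\sigma$ is $T\mapsto L^{-1}\rOsp(m|2n,\R)(T)L=\rOsp_{\DD}(m|2n)(T)$, which is therefore a real form of $\rOsp(m|2n)$ in the functor-of-points sense recalled in Sec.~\ref{slg-sec}.

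Next I would compute $\sigma_T$ on the generic element $g=\bigl(\begin{smallmatrix} a & \al_1 & \al_2 \\ \be_1 & b_{11} & b_{12} \\ \be_2 & b_{21} & b_{22}\end{smallmatrix}\bigr)$. The element $L$ is block-diagonal with blocks $I_m$ and $M=\tfrac{1}{\sqrt{2i}}\bigl(\begin{smallmatrix} i & i \\ -1 & 1\end{smallmatrix}\bigr)$, and one checks $M^{-1}=\tfrac{1}{\sqrt{2i}}\bigl(\begin{smallmatrix} 1 & -i \\ 1 & i\end{smallmatrix}\bigr)$. Thus inner conjugation by $L$ leaves $a$ untouched, mixes the odd columns $(\al_1\;\al_2)$ by right multiplication with $M$, mixes the odd rows $\bigl(\begin{smallmatrix}\be_1\\\be_2\end{smallmatrix}\bigr)$ by left multiplication with $M^{-1}$, and acts on the lower-right $2n\times 2n$ block by $b\mapsto M^{-1}bM$. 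Imposing $\sigma_T(g)=g$ entry-by-entry produces the constraints
\[
a=\overline{a},\qquad \al_2=-i\,\overline{\al}_1,\qquad \be_2=i\,\overline{\be}_1,\qquad b_{22}=\overline{b}_{11},\qquad b_{21}=\overline{b_{12}}.
\]
Combined with the orthosymplectic relations (\ref{Sp-eq}), the reality of $a$ together with the first row of (\ref{Sp-eq}) forces $a_0^t a_0=1$ on the reduced level, so $a_0\in O(m)$. This yields exactly the claimed form of $\rOsp_{\DD}(m|2n)(T)$.

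Finally, for $K_\DD=L^{-1}K_\br L$ I would restrict the same conjugation to elements of $K_\br$, which have vanishing odd blocks and lower-right block $\bigl(\begin{smallmatrix} b_{11} & b_{12} \\ -b_{12} & b_{11}\end{smallmatrix}\bigr)$. A direct computation gives
\[
M^{-1}\begin{pmatrix} b_{11} & b_{12} \\ -b_{12} & b_{11}\end{pmatrix}M
\;=\;
\begin{pmatrix} b_{11}+ib_{12} & 0 \\ 0 & b_{11}-ib_{12}\end{pmatrix},
\]
a block-diagonal matrix whose two diagonal entries are complex conjugates (as $b_{11},b_{12}$ are real). Renaming $b_{11}+ib_{12}$ as the new $b_{11}$, the third diagonal block becomes $\overline{b}_{11}$, and the residual orthosymplectic relation $b_{11}^t b_{22}-b_{21}^t b_{12}=1$ becomes $b_{11}\overline{b}_{11}^t=I$, i.e.\ unitarity of $b_{11}$. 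I do not anticipate any conceptual difficulty here; the only obstacle is bookkeeping, namely tracking the signs, the $1/\sqrt{2i}$ factors, and the supertranspose conventions through the conjugation carefully.
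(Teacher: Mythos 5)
Your proposal is correct and takes essentially the same route as the paper: since $L^tJL=J$ gives $J^{-1}L^tJ=L^{-1}$, the paper's antilinear involution $h\mapsto F\bar{h}F^{-1}$ with $F=J^{-1}(L^tJ\bar{L})=L^{-1}\bar{L}$ is literally your $\sigma_T(g)=L^{-1}\rho_T(LgL^{-1})L$, and both arguments conclude by reading off the fixed-point equations block by block (you are merely more explicit than the paper on the $K_\DD$ computation, which it dismisses as classical).
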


\begin{proof} 
The conjugation defining $\rOsp_\DD(m|2n)$ inside $\rOsp(m|2n)$ is:
$$
\left( \begin{matrix} a & \al_1 & \al_2\\
\be_1 & b_{11} &  b_{12} \\ 
\be_2& b_{21}  & b_{22}  
\end{matrix} \right) \mapsto 
\left( \begin{matrix} \bar{a} & -i\albar_2 & -i\albar_1 \\
i\bebar_2 & \overline{b_{22}} & \overline{b_{21}} \\ 
i\bebar_1& \overline{b_{12}}  &\overline{b_{11}}  
\end{matrix} \right)
$$
The statement about $K_\DD$ is entirely classical and known.
\end{proof}

\begin{proposition}
$\rOsp_\DD(m|2n)$ acts transitively on $\cD$ and $K_\DD$ is the stabilizer
of the topological point $(1,0)$. Hence
$$
\cD \cong \rOsp_\DD(m|2n)/K_\DD.
$$
\end{proposition}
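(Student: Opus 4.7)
The plan is to transport the homogeneous realization $\cS \cong \rOsp(m|2n,\R)/K_\br$ from Theorem~\ref{ospaction-thm} through the super Cayley diffeomorphism $\phi\colon \cD \to \cS$. The pivotal observation is that $\phi$ is, tautologically, the restriction to $\cD$ of left multiplication by $L$ on the lagrangian $\cL$. Combined with the definitions $\rOsp_\DD(m|2n) = L^{-1}\rOsp(m|2n,\R)L$ and $K_\DD = L^{-1}K_\br L$, this yields the intertwining
\[
\phi(g\cdot d) \;=\; L\cdot(g\cdot d) \;=\; (LgL^{-1})\cdot(L\cdot d) \;=\; (LgL^{-1})\cdot\phi(d)
\]
for every $g \in \rOsp_\DD(m|2n)(T)$ and $d \in \cD(T)$, with $LgL^{-1} \in \rOsp(m|2n,\R)(T)$. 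Since $\rOsp(m|2n,\R)$ preserves $\cS$, this immediately forces the natural matrix action of $\rOsp_\DD(m|2n)$ on $\cL$ to preserve $\cD$, giving a well-defined action $a\colon \rOsp_\DD(m|2n)\times \cD \to \cD$.

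First I would deduce transitivity. On the reduced spaces, the action of $|\rOsp_\DD(m|2n)|$ on $|\cD|$ is conjugate through $|\phi|$ to the transitive action of $|\rOsp(m|2n,\R)|$ on $|\cS|$, hence is itself transitive. At the infinitesimal level, the differential $(\di a_{d_0})_e$ at the basepoint $d_0\in|\cD|$ (the one sent by $|\phi|$ to $(iI,0)\in|\cS|$) factors as the surjective differential from Theorem~\ref{ospaction-thm} composed with the linear isomorphisms $\Ad(L^{-1})\colon \Lie(\rOsp(m|2n,\R))\to \Lie(\rOsp_\DD(m|2n))$ and $\di\phi_{d_0}^{-1}$; so it is surjective. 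By Observation~\ref{trans-action-obs} the supergroup action is then transitive. For the stabilizer, the intertwining identity says that $g$ fixes $d_0$ iff $LgL^{-1}$ fixes $(iI,0)$, i.e.\ $LgL^{-1}\in K_\br(T)$, which by definition of $K_\DD$ is exactly $g\in K_\DD(T)$. Proposition~\ref{propstab}(3) then yields $\cD \cong \rOsp_\DD(m|2n)/K_\DD$.

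The only point that requires some care is to check that the functorial recipe above really defines a supermanifold morphism satisfying the axioms of Definition~\ref{trans-action-def}, and in particular that left multiplication by $L$ on $\cL$ restricts to a \emph{supermanifold} isomorphism between the open subsupermanifolds $\cD$ and $\cS$ (not merely between the underlying topological spaces). This, however, is already contained in the preceding proposition establishing $\phi$, together with the compatibility between the two real structures on $\rOsp(m|2n)$. No substantive obstacle is anticipated beyond this routine verification.
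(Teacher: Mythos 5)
Your argument is correct, but it is not the route the paper takes: the paper's proof of this proposition is literally ``the same as for Theorem~\ref{ospaction-thm}'', i.e.\ it re-runs the direct homogeneity check on the disc model (transitivity of the reduced action from the classical theory of the bounded realization, surjectivity of $(\di a_{d_0})_e$, then Observation~\ref{trans-action-obs} and Proposition~\ref{propstab}). You instead transport the already-established isomorphism $\cS\cong\rOsp(m|2n,\R)/K_\br$ through the super Cayley transform, using that $\phi$ is induced by left translation by $L$ on $\cL=\rOsp(m|2n)/P$ and that $\rOsp_\DD(m|2n)$ and $K_\DD$ are \emph{defined} as $L$-conjugates; the intertwining $\phi(g\cdot d)=(LgL^{-1})\cdot\phi(d)$ then gives the invariance of $\cD$, transitivity, and the stabilizer all at once. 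Your approach buys something real: the stabilizer comes out equal to $K_\DD$ by definition rather than by a matrix computation, and in fact transitivity is automatic (an action conjugate, via an isomorphism of supermanifolds and of supergroups, to a transitive action is transitive), so your separate verification of the reduced and infinitesimal conditions is not even needed. The paper's approach is self-contained on the disc model and does not rely on identifying $\phi$ with left translation on the quotient. Two cosmetic remarks: the basepoint the paper calls $(1,0)$ is the one with $z=0$, $\zeta=0$ (the center of the superdomain, sent by $\phi$ to $(iI,0)$), which is how you correctly treat it; and in your factorization of $(\di a_{d_0})_e$ the conjugation $g\mapsto LgL^{-1}$ has differential $\Ad(L)\colon\Lie(\rOsp_\DD(m|2n))\to\Lie(\rOsp(m|2n,\R))$, not $\Ad(L^{-1})$ in the direction you wrote --- harmless, since surjectivity is preserved under composition with isomorphisms on either side.
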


\begin{proof} It the same as for \ref{ospaction-thm}.
\end{proof}

We now compute the real Lie superalgebras of $\rOsp_\DD(m|2n)$ and $K_\DD$.

\begin{proposition} \label{cartan-decthm}
We have that 
$$
\begin{array}{rl}
\rosp_\DD(m|2n) 
&=\left\{ \left( \begin{matrix} x & \xi & -i\overline{\xi} \\
-i\overline{\xi^t} & y_{11} &  y_{12} \\ 
-{\xi^t} & \overline{y_{12}}  & \overline{y_{11}}  
\end{matrix}\right) \, | \, 
\begin{cases} y_{12} \, \mbox{ \textrm{symmetric}}, \\
x=-\overline{x}^t, \, y_{11}=-\overline{y_{11}^t} 
\end{cases}
\right\} \, \\ \\
\fk_\DD &=\Lie(K_\DD)= 
\left\{ \left( \begin{matrix} x & 0 & 0 \\
0 & y_{11} &  0 \\ 
0 & 0  & \overline{y_{11}}  
\end{matrix}\right)\right\}
\end{array}
$$
\end{proposition}
\begin{proof}
The conjugation defining $\rosp_\DD(m|2n)$ is obtained as follows:\\
$X \in \rosp_\DD(m|2n)$ if and only if $X \in \rosp(m|2n)$ and
$F \overline{X}=XF$ where
$F= \left( \begin{matrix} 1 & 0 & 0 \\ 0 & 0 & i \\
0 & i & 0 \end{matrix} \right)$.
An easy calculation shows the result.
\end{proof}

For the complex Lie superalgebra (see Sec. \ref{admissible-sec})
$\rosp(m|2n)$ we have the admissible system $P=P_k \cup 
P_{n,0} \cup P_{n,1}$, where:
\begin{align*}
P_{n,0} = & \{\ep_1 \pm \ep_j\mid 1<j\leq m  \}\cup\{\ep_1\} 
\cup \{\de_i+\de_j\mid 1\leq i,j \leq n \} \\ \\ 
P_{n,1} = & \{\de_i \pm \ep_j\mid 1\leq i\leq n \,,\, 
1\leq j\leq m\}\cup \{\delta_i\mid 1\leq i \leq n\}\\ \\
P_{k}= &
\{\ep_i \pm \ep_j\mid 1<i<j\leq m \}\cup 
\{\ep_i\mid 1< i \leq m \} \cup\{ \de_i - \de_j\mid1\leq i<j\leq n\}
\end{align*}
with 
$$\Pi=\{ \ep_1 - \ep_2\,, \dots ,\, \ep_{m-1}-\ep_m\,,\, \ep_m\,,\, 
\de_1 - \de_2\,, \dots ,\, \de_{n-1}-\de_n\,,\, \de_{n}-\ep_1\}
$$
the simple system with 
one simple non-compact even root $\ep_1 - \ep_2$ 
and one non-compact simple odd root: $\de_{n}-\ep_1$.

\begin{proposition}\label{liestructure}
The complex Lie superalgebra $\rosp(m|2n)$ is the vector space
direct sum 
of three Lie subsuperalgebras:
$$
\rosp(m|2n)=\fk\oplus \fp^+ \oplus \fp^-, 
$$
$$
\fk=\sum_{\al \in P_k\cup -P_k} \fg_\al, \quad
\fp^+=\sum_{\al \in P_n} \fg_\al, \quad
\fp^-=\sum_{\al \in -P_n} \fg_\al.
$$
where $\fk=\C \otimes \Lie(K_\DD)$ and 
$$
\fp^+=\left\{ \left( \begin{array}{ccc} 
0 & 0 & \xi \\ \xi^t & 0 & u \\ 0 & 0 & 0
\end{array} \right) \right\} \quad 
\fp^-=\left\{ \left( \begin{array}{ccc} 
0 & -\eta & 0 \\ 0 & 0 & 0 \\ \eta^t & v & 0
\end{array} \right) \right\}. 
$$
\end{proposition}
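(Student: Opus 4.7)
The plan is to deduce the abstract direct-sum decomposition from the admissibility of the positive system $P$ produced in the preceding lemma, and then match the three summands with the explicit matrix descriptions by a direct root-space computation.

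First, since $P = P_k \cup P_n$ is admissible in the sense of Sec.~\ref{hcdec-sec}, the general theory of admissible systems already developed there (see Theorem~\ref{hcrepdef-obs} and the discussion preceding it) gives that $\fp^{\pm} := \sum_{\al\in \pm P_n}\fg_\al$ are Lie subsuperalgebras satisfying $[\fp^{\pm},\fp^{\pm}]\subset\fp^{\pm}$, and that the vector space decomposition $\fg = \fk \oplus \fp^+ \oplus \fp^-$ holds, with $\fk$ stable under the bracket. So the only content that is specific to $\fg = \rosp(m|2n)$ is the identification of these three pieces with concrete blocks of matrices, and the matching with the real form $\rosp_\DD(m|2n)$ of Prop.~\ref{cartan-decthm}.

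Next I would fix a Cartan subalgebra $\fh\subset\rosp(m|2n)$ in the standard way, namely the diagonal matrices of the form $\mathrm{diag}(a_1,\ldots,a_m,b_1,\ldots,b_n,-b_1,\ldots,-b_n)$ inside the matrix realization used in Sec.~\ref{slagr-sec}, and define $\ep_i(\fh)=a_i$, $\de_j(\fh)=b_j$. A short direct computation gives the root spaces $\fg_\al$ in terms of the elementary matrices $E_{ij}$: each $\pm\ep_i\pm\ep_j$, $\pm\de_i\pm\de_j$, $\pm\de_i\pm\ep_j$, and $\pm\ep_i,\pm\de_i$ corresponds to a specific block position in the $(m+n+n)\times(m+n+n)$ layout. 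Collecting the contributions from $P_n = P_{n,0}\cup P_{n,1}$ (those are the roots $\ep_1\pm\ep_j$, $\ep_1$, $\de_i+\de_j$, $\de_i\pm\ep_j$, $\de_i$, cf.\ the preceding lemma) one sees that the entries in $\fp^+$ live exactly in the upper-right $\xi$-block, the first row of the $\eta^t$-block, and the symmetric $u$-block; writing this out yields the matrix form of $\fp^+$ stated in the proposition, and $\fp^-$ is obtained by transposition, i.e.\ from $-P_n$.

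Finally, to obtain $\fk=\C\otimes\Lie(K_\DD)$ and $\fp^+\oplus\fp^-=\C\otimes\fp_\DD$, I would compare with the explicit description of $\Lie(K_\DD)$ and $\fp_\DD$ in Prop.~\ref{cartan-decthm}. A direct check shows that the matrices of $\fk_\DD$ complexify to precisely the $\fh$-weight-zero plus compact-root-space part (block-diagonal of type $(x,y_{11},\overline{y_{11}})$ with $x$ antisymmetric and $y_{11}$ anti-Hermitian, whose complexification is the full block-diagonal set preserving the bilinear form on the $\ep_j$-space and on the $\de_j$-space), and this is exactly $\sum_{\al\in P_k\cup -P_k}\fg_\al \oplus \fh$. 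Similarly the off-diagonal blocks making up $\fp_\DD$ complexify to the sum of the non-compact root spaces. I expect the main obstacle to be purely bookkeeping: aligning the conjugation $X\mapsto FX F^{-1}$ (with the matrix $F$ from Prop.~\ref{cartan-decthm}) with the $\pm P_n$ decomposition, since the real-form conjugation mixes the $\de$-indexed blocks with their complex conjugates while the complex root-space decomposition separates them; once this identification is tracked carefully on a few typical root vectors, the rest follows by linearity.
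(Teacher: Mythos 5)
Your overall strategy is the natural one, and in fact the paper supplies no proof at all: the proposition is introduced with ``This gives immediately the following proposition,'' so the root-space bookkeeping you propose is exactly what a complete argument would have to contain. Your first step (admissibility of $P$ gives the abstract decomposition $\fg=\fk\oplus\fp^+\oplus\fp^-$ into subsuperalgebras) and your last step (comparison with $\fk_\DD$ and $\fp_\DD$ from Prop.~\ref{cartan-decthm}) are fine in outline.

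The middle step, however, does not close as you describe it, and this is a genuine gap rather than bookkeeping. The root spaces of the even roots $\ep_1\pm\ep_j$ and $\ep_1$, which the preceding lemma places in $P_{n,0}$, sit inside the $\so(m,\C)$ diagonal block (the block called $a$ in Sec.~\ref{slagr-sec}), not in the $\xi$-, $u$- or $\eta$-blocks. Consequently $\sum_{\al\in P_n}\fg_\al$, computed with the lemma's $P_n$, has a nonzero component in the $(1,1)$ block and is strictly larger than the matrix set displayed for $\fp^+$; correspondingly $\fh+\sum_{\al\in P_k\cup(-P_k)}\fg_\al$ is only $\so(m-2,\C)\oplus\so(2,\C)\oplus\gl(n,\C)$, strictly smaller than $\C\otimes\Lie(K_\DD)=\so(m,\C)\oplus\gl(n,\C)$. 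So your claim that collecting the contributions from $P_n$ ``yields the matrix form of $\fp^+$ stated in the proposition'' is false as written: carrying out the computation reveals that the three descriptions in the statement (root-theoretic via the lemma's $P_n$, complexification of $\fp_\DD$, explicit matrices) are mutually inconsistent, with the last two agreeing with each other but not with the first. To prove the displayed matrix identities and the identification with $K_\DD\cong \mathrm{O}(m)\times\mathrm{U}(n)$ one must take $P_n=\{\de_i+\de_j\}\cup\{\de_i\pm\ep_j,\,\de_i\}$, i.e.\ declare \emph{all} $\ep$-roots compact; a correct write-up has to either make that correction or explicitly reconcile it with the lemma, and your proof does neither (your list of which blocks receive the $P_n$ root spaces is also internally garbled, since the $\eta$-blocks belong to $\fp^-$). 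A smaller point: the Cartan subalgebra is not literally diagonal in the realization of Sec.~\ref{slagr-sec}, because $a^t=-a$ forces zero diagonal in the orthogonal block; one must first pass to a basis in which the symmetric form is antidiagonal (or work with $2\times 2$ rotation blocks) before reading off root spaces.
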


We can now express explicitly the Harish-Chandra decomposition for
$\rOsp(m|2n)$, proven in Prop. \ref{lemma7}. 

\medskip
Let $P^-$ and $P^+$ be the complex subsupergroups of the
complex orthosymplectic supergroup $\rOsp(m|2n)$ defined via
their functor of points as:
$$
P^+=\left\{ \left( \begin{array}{ccc} 
1 & 0 & \xi \\ \xi^t & 1 & u \\ 0 & 0 & 1
\end{array} \right) \right\} \quad 
P^-=\left\{ \left( \begin{array}{ccc} 
1 & -\eta & 0 \\ 0 & 1 & 0 \\ \eta^t & v & 1
\end{array} \right) \right\}
$$
Most immediately $\fp^\pm=\Lie(P^\pm)$. 
Notice that while in the ordinary setting we have that the groups 
$\wt{P}^\pm$ are
abelian, in the supersetting, this is no longer true.

By Prop. \ref{lemma7} we have that
the supermanifold $P^-KP^+$ is open in $\rOsp(m|2n)$.

\medskip
By its very construction $\cD$ is
a complex supermanifold and it has a natural
action of $\rOsp_\cD(m|2n)$.
Notice that
$$
J \circ \ad(X)|_{\fp_\cD}=  \ad(X)|_{\fp_\cD} \circ J 
$$
with $J$ the almost complex structure at the identity coset,
$J: \fp_\cD \lra \fp_\cD$, where we identify
$\fp_\cD=T_{K_\cD}(\rOsp_\cD(m|2n)/K_\cD)$. 

\begin{proposition} \label{complex-str-prop}
Let $J=\ad(c)|_{\rosp_\cD(m|n)_0}+
\ad(2c)|_{\rosp_\cD(m|n)_1}$, where $c$ is the element in the
center of $\fk_\cD$:
$$
c=\begin{pmatrix} 0 & 0 & 0 \\ 0 & i/2 & 0 \\ 0 & 0 & -i/2 \end{pmatrix}
$$
Then $J|_{\fp_\cD}$ defines a complex structure on $\fp_\cD$,
{which corresponds to the $\wt{\rOsp_\cD(m|2n)}$ invariant
complex structure on  $\rOsp_\cD(m|2n)/K_\cD$},
as in Prop.  \ref{cpx-st}. 
\end{proposition}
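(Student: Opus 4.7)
The plan is to verify three separate assertions about the operator $J = \ad(c)|_{\rosp_\cD(m|2n)_0} + \ad(2c)|_{\rosp_\cD(m|2n)_1}$ restricted to $\fp_\cD$: first that $J^2 = -\id_{\fp_\cD}$; second that $J$ commutes with $\ad(X)|_{\fp_\cD}$ for every $X \in \fk_\cD$, which by equation (\ref{J-eq2}) produces a well-defined, $\rOsp_\cD(m|2n)$-invariant almost complex structure on $\cD \cong \rOsp_\cD(m|2n)/K_\cD$; and third that the resulting homogeneous complex structure coincides with the one coming from the Harish-Chandra embedding of Prop. \ref{cpx-st}.

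For the first assertion my approach is to complexify. By Prop. \ref{liestructure} we have $\fp_\cD \otimes_\R \C = \fp^+ \oplus \fp^-$ and both $\ad(c)$ and $\ad(2c)$ extend $\C$-linearly to this complexification. With the explicit $c = \mathrm{diag}(0, i/2, -i/2)$ and the block-matrix form of $\fp^\pm$ given in Prop. \ref{liestructure}, a direct commutator computation shows that on the even generator $u$ of $\fp^+_0$ (entry in position $(2,3)$) $\ad(c)$ scales by $c_{22}-c_{33} = i$, while on the odd generator $\xi$ of $\fp^+_1$ (entry in position $(1,3)$) it scales by $c_{11}-c_{33}= i/2$; the mirror computation gives $-i$ and $-i/2$ on $\fp^-$. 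Consequently $\ad(c)$ has eigenvalues $\pm i$ on $\fp^\pm_0$ and $\ad(2c)$ has eigenvalues $\pm i$ on $\fp^\pm_1$, which gives $J^2 = -\id$ on each summand and hence on $\fp_\cD$. The factor of $2$ in the odd sector is not cosmetic: it records the fact that an odd block only touches one of the non-zero diagonal entries of $c$, whereas an even block straddles both.

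For the second assertion, centrality of $c$ in $\fk_\cD$ together with $\fk_\cD$ being purely even gives $[c,X]=0$ for every $X \in \fk_\cD$, hence $\ad(c)\circ\ad(X) = \ad(X)\circ\ad(c)$ on all of $\fg$ (and similarly for $2c$). Since $\ad(X)$ preserves the $\Z_2$-grading of $\fp_\cD$, the two summands defining $J$ commute separately with $\ad(X)|_{\fp_\cD}$, which is precisely condition (\ref{J-eq2}).

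The third and subtlest step is the identification with the structure of Prop. \ref{cpx-st}. That complex structure arises from the open embedding $\rOsp_\cD(m|2n)/K_\cD \hookrightarrow \rOsp(m|2n)/KP^+$, whose tangent space at the identity coset is canonically identified, via the decomposition $\fg = (\fk \oplus \fp^+) \oplus \fp^-$, with $\fp^-$ as a complex subspace of $\fg$. The restriction of the projection $\fg \to \fg/(\fk \oplus \fp^+) \simeq \fp^-$ to $\fp_\cD$ is the real-linear isomorphism $X = X^+ + X^- \mapsto X^-$ and transports the complex structure of $\fp^-$ back to $\fp_\cD$. The eigenvalue computation of the second paragraph identifies $\fp^\pm$ as the $\pm i$-eigenspaces of $J$ on $\fp_\cD \otimes \C$; this is exactly the intrinsic characterization of the Harish-Chandra complex structure, and together with the $\rOsp_\cD(m|2n)$-invariance from the second assertion and the transitivity in Theorem \ref{ospaction-thm} it forces the two almost complex structures to agree on all of $\cD$.

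The main obstacle I anticipate is the careful bookkeeping of signs and conjugation: the map $\fp_\cD \to \fp^-$ above is only real-linear, the antilinear involution defining $\fg_\cD$ swaps $\fp^+$ and $\fp^-$, and the factor of $2$ between the even and odd sectors must be reconciled with the intrinsic eigenvalue characterization. Once these conventions are fixed, the entire argument reduces to the direct matrix computation on $\fp^\pm$, plus invocation of centrality of $c$ and the uniqueness of an $\rOsp_\cD(m|2n)$-invariant complex structure with prescribed eigenspace decomposition at the base point.
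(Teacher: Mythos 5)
Your proposal is correct and follows essentially the same route as the paper: invariance of $J$ from the centrality of $c$, the remarks after Def.~\ref{hermsusy-def} for the induced hermitian structure, and the identification of $\fp_\cD$ with $\fp^-$ via $\fg/(\fk\oplus\fp^+)$ to match the structure of Prop.~\ref{cpx-st}. The only difference is one of completeness: the paper's proof merely states that this identification must be checked, whereas you actually carry out the eigenvalue computation ($\ad(c)$ acting by $\pm i$ on $\fp^\pm_0$ and by $\pm i/2$ on $\fp^\pm_1$, whence the factor $2$) and correctly flag the remaining sign/conjugation bookkeeping, so your write-up supplies detail the paper omits.
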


\end{document}